\newcommand{\s}[1]{{\mathcal #1}}
\newcommand{\bb}[1]{{\mathbb #1}}
\newcounter{hyp}
\newcommand{\T}{\mathbb{T}}
\newcommand{\TT}{ \mathbb{T}}
\newcommand{\R}{\mathbb{R}}
\newcommand{\RR}{\mathbb{R}}
\def\dd{\hspace{1pt}{\rm d}} % d in the integral
\newcommand{\ds}{\displaystyle}
\newcommand{\da}{\downarrow}
\def\a{\alpha}
\def\d{\delta}
\def\g{\gamma}
\def\e{\varepsilon}
\def\eps{\varepsilon}
\renewcommand\epsilon{\varepsilon}
\newcommand{\cA}{{\mathcal A}}
\newcommand{\A}{{\mathcal A}}
\newcommand{\cB}{{\mathcal B}}
\newcommand{\B}{{\mathcal B}}
\newcommand{\K}{{\mathcal K}}
\def\be{\begin{equation}}
\def\ee{\end{equation}}
\def\ba{\begin{array}}
\def\ea{\end{array}}
\newcommand{\corrd}[1]{\textcolor{blue}{#1}} 
\newcommand{\sL}{{\mathscr L}}
\newcommand{\M}{{\mathscr M}}
\newcommand{\sP}{{\mathscr P}}
\newcommand{\mres}{\mathbin{\vrule height 1.6ex depth 0pt width
0.13ex\vrule height 0.13ex depth 0pt width 1.3ex}}
\newtheorem{theorem}{Theorem} %[section]
\newtheorem{corollary}[theorem]{Corollary}
\newtheorem{lemma}[theorem]{Lemma}
\newtheorem{proposition}[theorem]{Proposition}
\newtheorem{definition}[theorem]{Definition}
\newtheorem{remark}[theorem]{Remark}
\numberwithin{equation}{section}
\numberwithin{theorem}{section}
\title{The planning problem in Mean Field Games as regularized mass transport}
\author[P.J. Graber]{P. Jameson Graber}
\address{Department of Mathematics, Baylor University, One Bear Place, Waco, TX 97328, USA}
\email{jameson\_graber@baylor.edu}
\author[A.R. M\'esz\'aros]{Alp\'ar R. M\'esz\'aros}  
\address{Department of Mathematics, UCLA, 520 Portola Plaza, Los Angeles, CA 90095, USA}
\email{alpar@math.ucla.edu} 
\author[F. J. Silva]{Francisco J. Silva} 
\address{Institut de recherche XLIM-DMI, UMR-CNRS 7252 Facult\'e des sciences et techniques 
Universit\'e de Limoges, 87060 Limoges, France}
\email{francisco.silva@unilim.fr}
\author[D. Tonon]{Daniela Tonon}
\address{CEREMADE, CNRS, [UMR 7534], Universit\'e Paris-Dauphine, PSL Research University, 75016 Paris, France}
\email{tonon@ceremade.dauphine.fr}
\thanks{
{\it Keywords and phrases:} planning problem of mean field games; regularized mass transport; Hamilton-Jacobi equations; Sobolev regularity \\
{\it 2010 AMS Subject Classification:} 49K20; 35Q91; 49N60; 49N15; 49N70} 
\dedicatory{Version: \today}
\begin{document}
\maketitle

\begin{abstract}
In this paper, using variational approaches,  we investigate the first order planning problem arising in the theory of mean field games. We show the existence and uniqueness of weak solutions of the problem in the case of a large class of Hamiltonians with arbitrary superlinear order of growth at infinity and local coupling functions. We require the initial and final measures to be merely summable. At the same time (relying on the techniques developed recently in \cite{GraMes}), under stronger monotonicity and convexity conditions on the data, we obtain Sobolev estimates on the solutions of the planning problem both for space and time derivatives.

\end{abstract}

\tableofcontents

\section{Introduction} \label{sec:intro}

The purpose of this article is to study the first order planning problem in mean field game theory, which can be formulated as a system of nonlinear partial differential equations:
\begin{equation}\label{eq:planning_general}
\left\{
\begin{array}{ll}
-\partial_t u  + H(x,\nabla u)=f(x,m), & {\rm{in}}\  (0,T)\times\T^d,\\
\partial_t m  -\nabla\cdot (D_\xi H(x,\nabla u)m)=0, & {\rm{in}}\  (0,T)\times\T^d,\\
m(0,\cdot)=m_0,\ \ m(T,\cdot)=m_T, & {\rm{in}}\ \T^d.\\
\end{array}
\right.
\end{equation}
The data consist of probability measures $m_0,m_T\in\sP(\T^d)$, a fixed time horizon $T>0$, a coupling function $f:\T^d\times[0,+\infty)\to\R$ and a Hamiltonian $H:\T^d\times\R^d\to\R$.
Our aim is to find conditions on the data for which weak solutions to \eqref{eq:planning_general} can be shown to exist and are unique.

The theory of Mean Field Games (briefly MFG in what follows) was thrust into the limelight by the works of J.-M. Lasry and P.-L. Lions on the one hand (see \cite{LasLio06i,LasLio06ii,LasLio07}) and M. Huang, R. Malham\'e and P. Caines on the other (see \cite{HuaMalCai}). Their main motivation was to study limits of Nash equilibria of (stochastic or deterministic) differential games when the number of players tends to infinity. Since then, it has become a very lively and active branch of the theory nonlinear partial differential equations.
In addition to studying Nash equilibria, Lions \cite{Lions_course_planning} proposed a corresponding {\it planning problem}, in which a central planner would like to steer a population to a predetermined final configuration while still allowing individuals to choose their own strategies.

Let us give a simple, brief interpretation of System \eqref{eq:planning_general} in terms of large numbers of interacting agents.
The solution of the Hamilton-Jacobi Equation \eqref{eq:planning_general}(i) is supposed to be the value function for an optimal control problem of the form
\begin{equation}\label{prob:control}
\inf_{\a}\left\{\int_t^T \big[L(x(s),\a(s))+f(x(s),m(s,x(s)))\big]\dd s +u(T,x(T))\right\}=:u(t,x)
\end{equation}
subject to 
$$
\left\{
\begin{array}{ll}
x'(s)=\alpha(s), & s\in(t,T]\\
x(t)=x\in\T^d.
\end{array}
\right.
$$

%\begin{equation}\label{prob:control}
%\inf_{\a}\left\{\int_t^T L(X_s,\a_s)+f(X_s,m(s,X_s))\dd s +u(T,X_T)\right\}
%\end{equation}
%subject to 
%$$
%\left\{
%\begin{array}{ll}
%dX_s=\alpha_s + \sigma(X_s)dW_s, & s\in(t,T]\\
%X_t=x\in\T^d.
%\end{array}
%\right.
%$$
Here the Lagrangian $L:\T^d\times\R^d\to\R$ is the Legendre-Fenchel transform of $H$ w.r.t.~the second variable. %, $W_s$ is standard Brownian motion, and $\sigma$ is a matrix-valued function such that $\sigma \sigma^T = A$. 
Formally the optimal strategy is given in feedback form, hence for the agent it is optimal to play  $-D_\xi H(x(s),\nabla u(s,x(s)))$. Having this velocity field as a drift, the evolution of the agents' density is given by the solution of the second  equation in \eqref{eq:planning_general}.
Then the coupling of the equations in \eqref{eq:planning_general} implies that every player is acting optimally with respect to the competing choices, i.e.~the game is in equilibrium.
We underline the fact that when considering `standard' mean field games, typically, the final cost $u(T,\cdot) = u_T$ is treated as given, along with an initial population density $m(0,\cdot) = m_0$.
For the planning problem, however, we fix a target population density $m(T,\cdot) = m_T$, leaving $u(T,\cdot)$ as an adjustable variable by which a central planner may determine the final outcome. Thus in the above control problem in particular $u(T,\cdot)$ is part of the problem itself.

As in many studies of mean field games, we restrict our attention to the case where $f(x,m)$ is an increasing function in the $m$ variable, namely $\partial_m f(x,\cdot)>0$ for all $x\in\T^d$.
We interpret this to mean that agents have a preference for low-density regions, i.e.~they want to avoid congestion.

In spite of the large number of studies available on MFG, the literature on System \eqref{eq:planning_general} is sparse.
The cases when $f$ is increasing and $D_{\xi\xi}^2 H(x,\cdot)>0$ (and both are smooth) are well-understood in the literature for both first order and second order non-degenerate models. In his lectures P.-L. Lions showed how to transform \eqref{eq:planning_general} into a uniformly elliptic system (thanks to smoothness assumptions on $f$ and $H$) on space-time, and he showed the existence of classical solutions. One can summarize these results as follows.

\begin{theorem}{\cite{Lions_course_planning}}
Let $m_0,m_T$ be strictly positive probability densities of class $C^{1,\alpha}(\T^d)$ ($0<\alpha<1$), let moreover  $f$ and $H$ be smooth such that $\partial_m f(x,\cdot)>0$ for all $x\in\T^d$ and $D_{\xi\xi}^2 H(x,\cdot)>0$. Then, there exists a unique solution $(u,m)\in C^{2,\alpha}([0,T]\times\T^d)\times C^{1,\alpha}([0,T]\times\T^d)$ to \eqref{eq:planning_general} (here the uniqueness of $u$ has to be understood modulo constants). Moreover, $(u,m)\in C^{\infty}((0,T)\times\T^d)\times C^{\infty}((0,T)\times\T^d).$ 
\end{theorem}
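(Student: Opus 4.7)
The plan is to follow Lions' strategy of recasting \eqref{eq:planning_general} as a single quasilinear elliptic equation on the space-time cylinder $(0,T)\times\T^d$ and then invoking classical elliptic theory. Since $\partial_m f>0$ and $f$ is smooth, its inverse $G(x,\cdot)$ in the $m$-variable satisfies $\partial_q G>0$, so the Hamilton--Jacobi equation \eqref{eq:planning_general}(i) becomes $m = G\bigl(x,\,-\partial_t u + H(x,\nabla u)\bigr)$. Substituting this into the continuity equation produces a fully nonlinear second order PDE for $u$ alone, and the data $m(0,\cdot)=m_0$, $m(T,\cdot)=m_T$ turn into nonlinear Neumann-type boundary conditions at $t=0,T$. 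A direct computation shows that the symbol of the linearized principal part at a Fourier mode $(\tau,\eta)\in\R\times\R^d$ equals $\partial_q G\,(\tau - D_\xi H\cdot\eta)^2 + G\,\eta^\top D_{\xi\xi}^2 H\,\eta$, which is positive definite thanks to $\partial_q G>0$, $G>0$ (inherited from strictly positive endpoints and preserved by the transport), and $D_{\xi\xi}^2 H>0$. Hence the reduced equation is uniformly elliptic on $(0,T)\times\T^d$.

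For existence I would use the continuity method (or Leray--Schauder), deforming to a reference system with quadratic $H$ and linear coupling for which solvability is explicit. The crucial step is a uniform a priori $C^{2,\alpha}$ bound on $u$. An $L^\infty$ bound follows from the variational (minimax) characterization of the MFG planning problem together with the superlinear growth of $H$; a Lipschitz bound comes from a Bernstein-type computation exploiting the uniform ellipticity above; and $C^{1,\alpha}$ regularity follows from De Giorgi--Nash--Moser applied to the linearized equation. Schauder theory then upgrades $u$ to $C^{2,\alpha}$, and a standard bootstrap yields $C^\infty$ on the open cylinder. With $u$ classical, the drift $-D_\xi H(\cdot,\nabla u)$ is Lipschitz, the Lagrangian flow is a diffeomorphism, and $m = G(x,-\partial_t u + H(x,\nabla u))$ is $C^{1,\alpha}$ and strictly positive, retroactively justifying the ellipticity ansatz.

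Uniqueness is obtained via the classical Lasry--Lions monotonicity argument. Testing the difference of the Hamilton--Jacobi equations against $m_1-m_2$ and the difference of the continuity equations against $u_1-u_2$, integrating by parts over $(0,T)\times\T^d$ and adding the two identities (the boundary contributions at $t=0,T$ vanishing because $m_1-m_2\equiv 0$ there), one arrives at
\begin{multline*}
\int_0^T\!\!\int_{\T^d}\bigl[f(x,m_1)-f(x,m_2)\bigr](m_1-m_2)\,\dd x\,\dd t \\
+ \int_0^T\!\!\int_{\T^d}\bigl[m_1\Lambda(x,\nabla u_1,\nabla u_2)+m_2\Lambda(x,\nabla u_2,\nabla u_1)\bigr]\,\dd x\,\dd t = 0,
\end{multline*}
where $\Lambda(x,p,q):=H(x,q)-H(x,p)-D_\xi H(x,p)\cdot(q-p)\ge 0$ measures the strict convexity defect of $H$. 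Each summand is non-negative, hence vanishes: strict monotonicity of $f$ gives $m_1=m_2$, while strict convexity of $H$ combined with $m>0$ gives $\nabla u_1=\nabla u_2$, so that $u_1-u_2$ reduces to a constant.

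The main obstacle is the global a priori gradient estimate on $u$ required to close the Schauder bootstrap. In the smooth, non-degenerate regime of the theorem this is achievable by Bernstein's method on the space-time cylinder, but it is precisely this step that forces the strong convexity, smoothness, and positivity hypotheses of the statement; relaxing any of them precludes the direct elliptic reduction and motivates the variational and Sobolev techniques developed in the remainder of the paper.
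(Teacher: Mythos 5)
This theorem is quoted from Lions' lectures \cite{Lions_course_planning}; the paper supplies no proof of its own, only the one-line description that the system is transformed into a uniformly elliptic equation on space-time. Your reconstruction follows exactly that route: the reduction $m=G(x,-\partial_t u+H(x,\nabla u))$, the principal symbol $\partial_q G\,(\tau-D_\xi H\cdot\eta)^2+G\,\eta^\top D^2_{\xi\xi}H\,\eta$, and the Lasry--Lions monotonicity identity for uniqueness are all correct and consistent with the strategy the paper attributes to Lions. Be aware, though, that the existence half remains a sketch rather than a proof --- the uniform gradient bound needed to close the Schauder/continuity-method argument (and the strict positivity of $m$ for an \emph{arbitrary} competitor in the uniqueness argument, not just the constructed solution) are asserted, not established --- and that after concluding $\nabla u_1=\nabla u_2$ one still needs to feed this back into the Hamilton--Jacobi equation to see that $u_1-u_2$ is independent of $t$ as well as of $x$.
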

Similar results can be achieved for the case of nondegenerate second order model as well, for purely quadratic Hamiltonians, or which are close at infinity to purely quadratic ones. The techniques used in this case are slightly different and they rely on the Hopf-Lax transformation, which is possible because of the quadratic Hamiltonian structure.

Existence of weak solutions to \eqref{eq:planning_general} in this latter case of nondegenerate second order models were obtained by A. Porretta in \cite{Por13,Por14} using energy methods. These results can be summarized as follows.

\begin{theorem}(\cite[Theorem 1.3]{Por14}, \cite[Theorem 2]{Por13})
Let us consider the nondegenerate diffusive model and let $f(x,m)$ be continuous, and nondecreasing w.r.t.~the $m$ variable. Let $H(x,\xi)$ be $C^1$ and convex in $\xi$ with quadratic growth. Let $m_0,m_T\in C^1(T^d)$ be strictly positive probability densities. Then there exists a  weak solution $(u,m)\in L^2([0,T]; H^1(\T^d))\times C^0([0,T]; L^1(\T^d))$ to \eqref{eq:planning_general}. Moreover, if $H$ is strictly convex in the $p$ variable, the solution is unique (modulo constants in the case of $u$).
\end{theorem}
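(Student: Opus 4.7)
The plan is to recast the system as the Euler--Lagrange equations of a convex variational problem, obtain the density $m$ as a minimizer via the direct method, and then recover the value function $u$ through Fenchel--Rockafellar duality. Let $L(x,v)$ denote the Legendre transform of $H(x,\cdot)$, which inherits quadratic growth and $C^1$ convexity, and set $F(x,m):=\int_0^m f(x,s)\,\dd s$, a convex nondecreasing antiderivative of $f$ in $m$. Over pairs $(m,w)$ satisfying, in the weak sense, the Fokker--Planck constraint
\[
\partial_t m -\Delta m + \nabla\cdot w = 0,\quad m(0,\cdot)=m_0,\quad m(T,\cdot)=m_T,
\]
I would study the primal functional
\[
\A(m,w):=\int_0^T\!\!\int_{\T^d}\Big[m\,L\!\left(x,-\tfrac{w}{m}\right)+F(x,m)\Big]\dd x\,\dd t,
\]
with the first term understood as the convex perspective ($+\infty$ when $m=0$ and $w\neq 0$). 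Because $m_0$ and $m_T$ are strictly positive and $C^1$, a smooth admissible pair $(\bar m,\bar w)$ can be produced by interpolating $\bar m$ linearly in $t$ and then solving the Fokker--Planck equation for $\bar w$; this makes the admissible set nonempty and, by the quadratic growth of $L$, the infimum finite.

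Existence of a minimizer $(m^\ast,w^\ast)$ then follows from the direct method: the constraint set is convex and weakly closed, $\A$ is convex and weakly lower semicontinuous, and quadratic growth provides $L^2$ bounds on $w$ along minimizing sequences, while parabolic regularity for the continuity equation gives equicontinuity of $m$ as a curve into $L^1(\T^d)$. To recover $u$, I would apply Fenchel--Rockafellar duality with the Fokker--Planck equation playing the role of the affine constraint and $u$ serving as its Lagrange multiplier. The first order optimality conditions translate into $w^\ast=-m^\ast D_\xi H(x,\nabla u)$, together with
\[
-\partial_t u - \Delta u + H(x,\nabla u) = f(x,m^\ast)
\]
and the prescribed endpoint marginals $m^\ast(0,\cdot)=m_0$, $m^\ast(T,\cdot)=m_T$. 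The regularity $u\in L^2([0,T];H^1(\T^d))$ is obtained by pairing the HJ equation with $m^\ast$, exploiting the quadratic bound on $H$ and the integrability of $F(x,m^\ast)$ to close an energy estimate; $m^\ast\in C^0([0,T];L^1(\T^d))$ is then standard parabolic regularity.

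For uniqueness under the additional strict convexity of $H(x,\cdot)$, I would run the Lasry--Lions monotonicity computation on two solutions $(u_1,m_1)$ and $(u_2,m_2)$: subtract the two HJ equations and test against $m_1-m_2$, subtract the continuity equations and test against $u_1-u_2$, then add. All boundary-in-time contributions cancel because the $m_i$'s agree at both $t=0$ and $t=T$, and the resulting identity forces the sum of two nonnegative quantities (from monotonicity of $f$ and from convexity of $H$) to vanish. Strict convexity then yields $\nabla u_1 = \nabla u_2$ wherever $m^\ast>0$, monotonicity of $f$ yields $m_1=m_2$, and reinserting in the HJ equation pins $u_1-u_2$ to an additive constant.

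The main obstacle I anticipate is the combination of low regularity and two-sided time boundary conditions. The perspective term $m\,L(x,-w/m)$ degenerates where $m$ vanishes, and $m_T$ is a terminal constraint rather than initial data propagated forward, so the usual parabolic a priori bounds must be coupled at both ends of $[0,T]$; additionally, the Lagrange multiplier extracted from Fenchel--Rockafellar must be shown regular enough to serve as a genuine weak solution of the HJ equation. Handling this cleanly typically calls for an approximation scheme --- smoothing $m_0$ and $m_T$, regularizing $F$ to strengthen convexity, and possibly adding a small vanishing parameter to the HJ equation --- and then passing to the limit using uniform estimates derived from the duality pairing between the HJ and Fokker--Planck equations.
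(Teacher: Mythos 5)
This statement is quoted in the paper from Porretta's works and is not proved there; Porretta's original argument proceeds by energy methods, approximating the planning problem by penalized mean field game systems (replacing the constraint $m(T,\cdot)=m_T$ by a final cost of the form $\frac{1}{\eps}(m(T)-m_T)$ or similar), deriving uniform estimates from the Lasry--Lions crossed duality identity, and passing to the limit. Your proposal instead follows the variational/Fenchel--Rockafellar route, which is the strategy this paper uses for the \emph{first order} problem and which Orrieri--Porretta--Savar\'e later carried out for the second order quadratic case. So you are on a legitimate but genuinely different path from the cited proof; the variational route has the advantage of delivering uniqueness of $m$ for free from strict convexity of the primal functional, while the energy-method route avoids having to construct the dual optimizer.

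The genuine gap is precisely at the point you flag only as an ``anticipated obstacle'': the existence and regularity of the dual variable $u$. Fenchel--Rockafellar, applied as you describe, gives attainment of the minimum on the $(m,w)$ side and the \emph{value} of the dual problem, but not a dual minimizer: the dual functional is posed over smooth $u$ with no prescribed data at either $t=0$ or $t=T$, so a minimizing sequence $(u_n)$ has no a priori compactness --- one can add arbitrary constants, and more seriously the oscillation of $u_n$ is not controlled by the functional alone. Closing this requires a quantitative estimate on subsolutions of $-\partial_t u-\Delta u+H(x,\nabla u)\le\alpha$ in terms of $\|\alpha_+\|_{L^{q'}}$ together with a truncation/renormalization of the minimizing sequence (in this paper: the H\"older bound of Lemma \ref{one_holder_bound} plus the composition $\tilde u_n=\eta\circ u_n$, which uses hypothesis (H2); in the second order setting of Orrieri--Porretta--Savar\'e: $L^\infty$ bounds for subsolutions obtained by duality with Wasserstein geodesics). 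Without such an estimate your ``pairing the HJ equation with $m^\ast$'' step also does not yield $u\in L^2([0,T];H^1)$: testing against $m^\ast$ only controls $\nabla u$ in $L^2(m^\ast\,\dd x\,\dd t)$, and upgrading to the unweighted norm again requires an $L^1$ bound on $u(T)-u(0)$ that comes from the missing oscillation estimate. A secondary, fixable point: the Lasry--Lions uniqueness computation must be run with one-sided inequalities (each $u_i$ is only a distributional subsolution and can only be used as a test function in the Fokker--Planck equation in the direction of an inequality, as in Lemma \ref{lem:phialphamw-plan}), so the cancellation of the time-boundary terms has to be argued through the optimality identity \eqref{defcondsup-plan} rather than by direct integration by parts.
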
 

In the recent paper \cite{OrrPorSav} C. Orrieri, A. Porretta and G. Savar\'e study weak solutions of System \eqref{eq:planning_general} set on the whole space $\R^d$ with Hamiltonians of quadratic growth and couplings with general growth. Their analysis relies on the variational structure of the problem and on a suitable weak theory -- which they develop in the paper -- of distributional sub-solutions and their traces of Hamilton-Jacobi equations with summable right hand sides.

 In \cite{GoSe}, D.A. Gomes and T. Seneci  explore displacement convexity properties, introduced by the Benamou-Brenier formulation for optimal transport,   in order to obtain a priori estimates for solutions of  \eqref{eq:planning_general}. Finally, some numerical aspects of the mean field planning problem were investigated by Y. Achdou, F. Camilli and I. Capuzzo-Dolcetta in \cite{AchCamCap12}. 

Our goal in this paper is to prove the existence and uniqueness of solutions of \eqref{eq:planning_general} for general Hamiltonians $H$ and with as few assumptions as possible on the initial/final conditions.
In order to accomplish this, we will make use of variational methods, which have proved to be very useful in mean field games, both to prove existence of weak solutions \cite{Car15,CarGra,CarGraPorTon,CarMesSan} and also to establish additional regularity \cite{GraMes,ProSan}.
The main idea behind this point of view (which is also exploited in \cite{OrrPorSav}) is that System \eqref{eq:planning_general} can be seen formally as first order necessary optimality conditions of two convex optimization problems in duality, cf.~\cite[Section 2.6]{LasLio07}.

The first problem is a control problem associated to the continuity equation, i.e.
$$\inf_{(m,w)}\int_0^T\int_{\T^d}\left[m H^*(x,-w(t,x)/m(t,x))+F(x,m(t,x))\right]\dd x\dd t$$
subject to $\partial_t m + \nabla\cdot w=0$ and $m(0,\cdot)=m_0$, $m(T,\cdot)=m_T$, where $H^*(x,\cdot)$ is the Fenchel conjugate of $H(x,\cdot)$ and $F(x,\cdot)$ is the antiderivative of $f(x,\cdot)$ w.r.t. the second variable.

The formal dual of this problem can be seen as a control problem associated to the Hamilton-Jacobi equation
$$\inf_{u}\int_0^T\int_{\T^d} F^*(x,-\partial_t u+H(x,\nabla u))\dd x\dd t+\int_{\T^d}u(T,x)\dd m_T(x)-\int_{\T^d}u(0,x)\dd m_0(x),$$ where $F^*(x,\cdot)$ is the Fenchel conjugate of $F(x,\cdot)$.

As the first results of our paper, in Section \ref{sec:well-posed_planning} we show the well-posedness of System \eqref{eq:planning_general} relying on the duality between the previous two convex optimization problems. To show the existence of a solution to the dual problem, we relax it in a suitable way and use a sort of `renormalization trick' that was first used in \cite{CarCarNaz}.

At this point, let us remark that in our existence proof for the dual problem, we require a joint condition on the order of growth of $H$ in the momentum variable and the order of growth of $f$ in the second variable (similarly as in \cite{Car15,CarGra}). This is mainly due to the lack of enough summability on $m$. It is worth mentioning that $L^\infty$ estimates on $m$ would allow us to drop this joint condition on $H$ and $f$. In this context, for the planning problem, in the case of purely quadratic Hamiltonians and $m_0,m_T\in L^\infty(\T^d)$ such $L^\infty$ estimates were obtained by Lions in his lectures (see \cite{Lions_course_planning}). Using completely different techniques, but still in the quadratic Hamiltonian case, such $L^\infty$ estimates on $m$ were obtained recently for mean field games by Lavenant and Santambrogio in \cite{LavSan}. Since in this paper our aim is to consider as general Hamiltonians and initial and final measures as possible, we are not pursuing the higher order summability estimates on $m$.  Such questions would deserve a completely independent study, by themselves. 

Moreover, in contrast to \cite{Car15,CarGra} and \cite{CarGraPorTon}, we also address questions of regularity of weak solutions, based on techniques developed in the recent work \cite{GraMes} by the first two authors.
The inspiration for these results comes from the alternative interpretation of the planning problem in terms of optimal mass transport. Indeed, the variational formulation of both the planning problem and mean field games has its roots in the dynamic formulation of the Monge-Kantorovich optimal transport problem (see \cite{BB}). In the same way, such convex variational problems are underneath other models studying weak solutions to the incompressible Euler equations (see for instance \cite{Bre99, AmbFig2}). The strong convexity present in these problems led Y. Brenier to develop a regularity theory for the pressure field in his model. Inspired by these techniques, the very same ideas were used later successfully to obtain Sobolev regularity for weak solutions of mean field games (see \cite{ProSan,San17,GraMes}). After this series of results it is not unexpected that such results should be obtained for the solutions of the planning problem. This fact also motivates our title, i.e.~the planning problem can be seen as a `regularized' optimal transport problem, where the presence of the coupling and convexity of the Hamiltonian imply immediate Sobolev estimates on the distributional weak solutions. We think that these regularity results in particular could have further impacts on other problems arising in optimal transportation. In this context, let us also mention the very recent paper \cite{LiuLoe} where the authors observe a different (but similar in spirit) regularization effect for very similar optimal transport type problems, in the presence of strong `mean field interaction effects'.

The organization of the paper is as follows.

In Section \ref{sec:well-posed_planning}, we show the well-posedness of System \eqref{eq:planning_general} via the `direct' variational approach, relying on the two convex optimization problems in duality.
We also consider additional regularity of weak solutions of {\color{red}\eqref{eq:planning_general}} in Section \ref{sec:regularity mfg}.
These estimates {\color{red}are} based on the recent work in \cite{GraMes}. Here, unlike in \cite{GraMes} we present also general local in time Sobolev estimates for time derivatives, which complete in some sense the results from \cite{GraMes}. Finally, in Section \ref{sec:open questions} we conclude with some open questions and ideas for further research. 

Let us finish this introduction by summarizing our main results.
Our results on well-posedness for the planning problem are given in Section \ref{sec:well-posed_planning}  and can be informally summarized as follows.

\begin{theorem}[Theorem \ref{theo:solution-equals-optimizer-planning}, Proposition \ref{prop:existence_solution_for_relaxed}]\label{thm:main2}
Let $m_0,m_T\in L^1(\T^d)$ be probability densities. Then under suitable growth and regularity assumptions on $H$ and $f$ (we refer to Section \ref{sec:well-posed_planning} for the precise hypotheses) we have that System \eqref{eq:planning_general} has a weak solution (in the sense of Definition \ref{def:weaksolplanning}). Moreover, if $f$ is strictly increasing in its second variable, then $m$ is unique and if $H$ is strictly convex in the momentum variable, $\nabla u$ is unique on ${\rm{spt}}(m)$.
\end{theorem}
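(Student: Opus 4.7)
The plan is to follow the classical Fenchel--Rockafellar duality strategy that has become standard for variational MFG, adapted to the planning constraint $m(T,\cdot)=m_T$. Concretely, I would introduce the primal functional
\[
\mathcal{A}(m,w):=\int_0^T\!\!\int_{\T^d}\Bigl[mH^*(x,-w/m)+F(x,m)\Bigr]\dd x\dd t
\]
on the affine set of pairs $(m,w)$ satisfying the continuity equation with the prescribed endpoints $m_0,m_T$. Using the convex perspective of $H^*$ I would first verify that $\mathcal{A}$ is convex and weakly-$*$ lower semicontinuous on $L^1\times\mathcal{M}$, and that the endpoint constraints can be achieved with finite energy (any reasonable geodesic interpolation between $m_0$ and $m_T$ convolved with a heat kernel works). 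Combined with the superlinear growth of $H^*$ and the joint growth hypothesis on $(H,f)$, this gives coercivity of a minimizing sequence in $L^{q}$ for some $q>1$, hence a primal minimizer $(\bar m,\bar w)$.

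On the dual side I would introduce the relaxed problem
\[
\inf_{(u,\alpha)}\int_0^T\!\!\int_{\T^d}F^*(x,\alpha)\dd x\dd t+\int_{\T^d}u(T,\cdot)\dd m_T-\int_{\T^d}u(0,\cdot)\dd m_0,
\]
taken over pairs satisfying the Hamilton--Jacobi inequality $-\partial_t u+H(x,\nabla u)\le\alpha$ in the distributional sense, with suitable regularity ($u\in BV_t$, $\nabla u$ in a weighted $L^p$). The relaxation is essential because a genuine $u$ with $-\partial_t u+H(x,\nabla u)=\alpha$ cannot be shown to exist a priori. Existence of a minimizing sequence with bounded energy follows from the growth of $F^*$; passing to the limit requires the renormalization trick of \cite{CarCarNaz}: test the HJ inequality against $\beta(\bar m)$ for truncations $\beta$, combined with the continuity equation, to control traces of $u$ at $t=0,T$ and to justify the integration by parts even when $\nabla u$ lacks $L^\infty$ bounds.

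Once both problems admit minimizers, I would prove the duality equality by a Fenchel--Rockafellar argument applied after regularization (e.g.\ mollifying the data and $H$), and then pass to the limit using the lower semicontinuity and coercivity above. The zero duality gap together with pointwise inspection of the equality case in Young's inequality yields that, on the set $\{\bar m>0\}$, one has $-\bar w/\bar m = D_\xi H(x,\nabla \bar u)$ and $\alpha=-\partial_t\bar u+H(x,\nabla\bar u)$ with $\bar\alpha\in\partial F(x,\bar m)$, i.e.\ $\bar\alpha=f(x,\bar m)$. This is precisely the weak formulation of \eqref{eq:planning_general} in the sense of Definition~\ref{def:weaksolplanning}, giving existence.

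For uniqueness, strict monotonicity of $f(x,\cdot)$ makes $F(x,\cdot)$ strictly convex, so any two minimizers $(\bar m_1,\bar w_1)$, $(\bar m_2,\bar w_2)$ of the primal problem must share the same $m$-component by convex combination. Strict convexity of $H(x,\cdot)$ then forces $\bar w_1=\bar w_2$ on $\{\bar m>0\}$, and the first-order optimality relation $-\bar w/\bar m=D_\xi H(x,\nabla\bar u)$ determines $\nabla\bar u$ uniquely on $\mathrm{spt}(\bar m)$. The main obstacle I expect is the dual existence step, where the lack of $L^\infty$ control on $\bar m$ prevents standard energy methods from closing; here the renormalization procedure, together with the joint growth assumption on $H$ and $f$ that provides just enough integrability for $\nabla u$, is doing the heavy lifting.
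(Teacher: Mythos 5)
Your overall architecture is the paper's: the convex problem in $(m,w)$ (the paper's $\mathcal{B}$), the relaxed dual in $(u,\alpha)$ with the distributional Hamilton--Jacobi inequality, Fenchel--Rockafellar to close the duality gap, inspection of the equality case to identify $\bar w=-\bar m D_\xi H(\cdot,\nabla\bar u)$ and $\bar\alpha=f(\cdot,\bar m)$, and strict convexity for uniqueness. The genuine gap is in the dual existence step, and it is not where you locate it. First, it is not true that ``existence of a minimizing sequence with bounded energy follows from the growth of $F^*$'': the dual functional contains the linear term $\int u(T)m_T-\int u(0)m_0$ with $u$ defined only up to constants, so a priori the infimum could be $-\infty$. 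The paper rules this out with a one-sided oscillation estimate for subsolutions of $-\partial_t u+H(x,\nabla u)\le\alpha$ (Lemma \ref{one_holder_bound}, from \cite{CarGra}), which bounds $u(0,x)-\inf_y u(T,y)$ by $C\bigl(\|\alpha_+\|_{q'}+1\bigr)$; this is exactly where the joint condition $r>\max\{d(q-1),1\}$ enters, not (as you suggest) through integrability of $\nabla u$. Second, the ``renormalization trick'' of \cite{CarCarNaz} is not a test of the HJ inequality against truncations $\beta(\bar m)$ of the density; it is a truncation of $u_n$ itself: after normalizing $\min_x u_n(T,x)=0$ one sets $\tilde u_n=\eta\circ u_n$ with $0\le\eta'\le1$ and $\eta$ bounded, and hypothesis (H2) ($H(x,a\xi)\le aH(x,\xi)$ for $a\in[0,1]$) guarantees that $\tilde u_n$ remains a subsolution with right-hand side $(\alpha_n)_+$. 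This is what produces the uniform $L^\infty$ bound, and from it the $L^1$ bound on $\partial_t\tilde u_n$ and the $L^r$ bound on $\nabla\tilde u_n$. Testing against $\beta(\bar m)$ would not deliver this compactness.

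Two further points your plan leaves unaddressed. The trace $u_n(T,\cdot)$ does not converge pointwise (local uniform convergence holds only on $[0,T)\times\T^d$), so one must identify the weak-$*$ $L^\infty$ limit of $u_n(T,\cdot)$ with the BV trace of $u$ at $t=T$; and the integration-by-parts lemma that converts the zero duality gap into the pointwise optimality conditions (Lemma \ref{lem:phialphamw-plan}) requires a space-time mollification of $(m,w)$ with an anisotropic kernel $\eta_{\delta(\e)}(t)\psi_\e(x)$, with $\delta(\e)$ chosen so that the flux of $w$ near $t=T$ is negligible -- this is the price of taking $m_0,m_T$ merely in $L^1$. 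Your uniqueness argument is essentially the paper's, with one correction: uniqueness of $\bar w$ on $\{\bar m>0\}$ follows from strict convexity of $H^*$ (equivalently, differentiability of $H$), while strict convexity of $H$ is what makes $D_\xi H(x,\cdot)$ injective and hence determines $\nabla\bar u$ on $\mathrm{spt}(\bar m)$ from $\bar w$.
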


The Sobolev regularity estimates on System \eqref{eq:planning_general} can be informally summarized as follows.

\begin{theorem}[Propositions \ref{prop:space-regularity}-\ref{prop:time-regularity}]\label{thm:main1} 
\vspace{5pt}

(i) Suppose that $m_0, \; m_T \in W^{2,1}(\T^d)$ and suppose further strong convexity and monotonicity conditions on $H$ and $f$ (we refer to Section \ref{sec:regularity mfg} for the details). Then the solution $(u,m)$ of \eqref{eq:planning_general} satisfies 
$$\|m^{\frac{q}{2} - 1}\nabla m\|_{L^2([0,T]\times\T^d)} \leq C,\ \ \ \|m^{1/2}D (j_1(\nabla u))\|_{L^2([0,T]\times\T^d)} \leq C,$$ 
where $(q-1)$  is the order of growth of $f$  $(q>1)$,  and if $H(x,\cdot)\sim|\cdot|^r$ $(r>1)$, then $j_1$ is a function growing like $|\cdot|^{r/2}$. 

\vspace{5pt}

(ii) Under similar assumptions on the data, but no assumptions on $m_0$ and $m_T$,   we have
 $$m^{1/2}\partial_t(j_1(\nabla u))\in L^2_{\rm{loc}}((0,T);L^2(\T^d)) \; \;  \mbox{and } \; \; \partial_t(m^{q/2})\in L^2_{\rm{loc}}((0,T);L^2(\T^d)).$$

\end{theorem}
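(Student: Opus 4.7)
My plan is to follow the strategy of \cite{GraMes}, which combines the variational structure of the planning problem with Bernstein-type test-function computations performed on the Hamilton-Jacobi equation. Since these arguments require regularity of $(u,m)$ that weak solutions do not a priori possess, I would first introduce a regularization: smoothed boundary data $m_0^\e, m_T^\e \in C^\infty(\T^d)$, a mollified coupling $f^\e$ preserving the strong convexity and monotonicity, and small viscosity terms $-\e\Delta u^\e$ in the HJ equation together with $+\e\Delta m^\e$ in the continuity equation. The regularized planning system admits classical, positive solutions $(u^\e, m^\e)$ by the result of Lions cited above, and all formal computations below are rigorous on this approximation; estimates are derived uniformly in $\e$, and the limit $\e \to 0$ is taken at the end using weak lower semicontinuity.

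For part (i), I would differentiate the Hamilton-Jacobi equation in $x_i$ and test the resulting equation against a suitable combination of $m^\e$ and derivatives of $u^\e$, while simultaneously using the continuity equation to reorganize the transport terms. The correct combination -- identified in \cite{GraMes} -- leads to an identity in which the strong convexity of $H(x,\cdot)$ produces a nonnegative quadratic form controlling $\|(m^\e)^{1/2} D(j_1(\nabla u^\e))\|_{L^2}^2$ (with $j_1$ chosen so that $D_{\xi\xi}^2 H(x,\xi) \gtrsim |j_1'(\xi)|^2$), and the monotonicity of $f$ produces the coercive term $\int \partial_m f(x,m^\e)|\nabla m^\e|^2\,m^\e\,\dd x\,\dd t$, which under the growth $f(x,m) \sim m^{q-1}$ delivers control on $\|(m^\e)^{q/2-1}\nabla m^\e\|_{L^2}^2$. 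Integration by parts in $t$ generates boundary contributions at $t=0$ and $t=T$ involving derivatives of $m_0$ and $m_T$; these are precisely the reason for the $W^{2,1}$ hypothesis on the data.

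For part (ii), the analogous argument is carried out with differentiation in $t$ instead of in $x$. To avoid the boundary contributions -- and thus any regularity hypothesis on $m_0, m_T$ -- I would insert a cut-off $\eta \in C_c^\infty(0,T)$ in time and test with $\eta(t)^2$ times the same combination as before, now with $\partial_t$ in place of $\nabla$. The resulting identity, after the same coercivity arguments, yields the announced local-in-time bounds on $(m^\e)^{1/2}\partial_t(j_1(\nabla u^\e))$ and $\partial_t((m^\e)^{q/2})$; the lower-order terms with $\eta'$ are absorbed using the basic energy estimates coming from the variational formulation of Section \ref{sec:well-posed_planning}.

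The technical heart of the proof lies in the precise bookkeeping of the cross-terms arising when the space- (or time-) differentiated Hamilton-Jacobi equation is combined with the continuity equation, and in controlling the boundary terms in part (i). Unlike the MFG setting of \cite{GraMes}, where $u(T,\cdot)$ is prescribed as part of the data and so endpoint terms involving $u(T,\cdot)$ are harmlessly bounded by the given datum, the planning problem leaves $u(T,\cdot)$ free but fixes $m(T,\cdot)=m_T$, so the boundary terms at $t=T$ must instead be absorbed via integration by parts transferring derivatives from $u^\e$ onto $m_T$ -- this is the origin of the $W^{2,1}$ requirement and the main new ingredient relative to the MFG case. Once uniform estimates in $\e$ are secured, weak lower semicontinuity transfers them to the weak solution produced by Theorem \ref{thm:main2}.
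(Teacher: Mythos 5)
Your proposal takes a genuinely different route from the paper, and unfortunately the route has a gap at its foundation. You propose to regularize by adding viscosity $-\e\Delta u^\e$, $+\e\Delta m^\e$ and to invoke Lions' classical solvability for the regularized planning system, after which Bernstein-type computations (differentiating the Hamilton--Jacobi equation) become legitimate. But Lions' classical existence theory for the second-order planning problem is only available for Hamiltonians that are essentially quadratic at infinity, whereas the present setting allows arbitrary superlinear growth $r>1$ with the joint condition $r>\max\{d(q-1),1\}$; indeed the paper explicitly lists the well-posedness of the (even non-degenerate) second-order planning problem with general Hamiltonians as an open question. Without classical solutions of the viscous system you cannot differentiate the equation, and for the first-order weak solution itself the HJ equation holds only as a distributional inequality plus the energy identity \eqref{defcondsup-plan}, so the Bernstein computation has no object to act on. There is also an exponent mismatch in your sketch: the coercive term you announce, $\int \partial_m f(x,m)\,|\nabla m|^2\, m\,\dd x\,\dd t \sim \int m^{q-1}|\nabla m|^2$, controls $\|m^{(q-1)/2}\nabla m\|_{L^2}$ rather than the claimed $\|m^{q/2-1}\nabla m\|_{L^2}$.

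The paper's argument stays entirely at the level of the two dual variational problems and uses translations (second-order difference quotients) instead of derivatives. For part (i) one fixes $\delta\in\T^d$, uses the translated smooth approximating subsolutions $u_n^{\pm\delta}$ (from the existence proof of Proposition \ref{prop:existence_solution_for_relaxed}) as test functions in the continuity equation for $(m,w)$, and symmetrizes; combining with the optimality identity \eqref{defcondsup-plan}, the coercivity hypothesis $H(x,\xi)+H^*(x,\zeta)-\xi\cdot\zeta\ge c_H|j_1(\xi)-j_2(\zeta)|^2$ turns the resulting ``duality gap'' into $\int |j_1(\nabla u^\delta)-j_1(\nabla u^{-\delta})|^2 m$, while the strong monotonicity \eqref{f strongly monotone} applied to $(f(x,m^\delta)-f(x,m))(m^\delta-m)$ (no extra factor of $m$) yields $\int\min\{(m^\delta)^{q-2},m^{q-2}\}|m^\delta-m|^2$. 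The right-hand side is $O(|\delta|^2)$, with the boundary contribution estimated as $\|u(T)\|_\infty\int|m_T^\delta+m_T^{-\delta}-2m_T|\le C\|m_T\|_{W^{2,1}}|\delta|^2$ --- this is where $W^{2,1}$ enters, via second difference quotients of the data against the bounded trace of $u$, not via integration by parts on a differentiated equation. Dividing by $|\delta|^2$ and letting $\delta\to0$ gives part (i); part (ii) is the same scheme with the localized time reparametrization $t\mapsto t+\e\eta(t)$, $\eta\in C_c^\infty(0,T)$, which is the one ingredient your proposal correctly anticipates. If you want to salvage your approach you would first have to establish classical solvability of the viscous planning system for general superlinear $H$ and then prove convergence of those solutions to the first-order weak solution --- both currently open.
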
 

\vspace{0.2cm}

{\sc\bf Acknowledgements.} The first author was supported by the National Science Foundation through Grant DMS-1612880. The second author was partially supported by the Air Force under the grant AFOSR MURI FA9550-18-1-0502. The last two authors were partially supported by the  ANR project  MFG ANR-16-CE40-0015-01 and the PEPS-INSMI Jeunes project ``Some open problems in Mean Field
Games'' for the years 2016, 2017 and 2018. This  article also benefited  from the support of the FMJH Program PGMO and from the support of EDF, Thales, Orange et Criteo.  Reference ANR-11-LABX-0056-LMH, LabEx LMH, Project PGMO  VarPDEMFG. The second author would like to thank the hospitality of the Baylor University during his visit in April 2018. The authors thank the anonymous referee for her/his useful comments which helped us to improve the manuscript.

\section{Well-posedness of the planning problem via two convex optimization problems in duality} \label{sec:well-posed_planning}
The general outline for the planning problem is largely the same as for variational mean field games (see e.g. \cite{CarCarNaz,Car15,CarGra,CarGraPorTon}): present two optimization problems in duality, prove that their minimizers are equivalent to solutions to \eqref{eq:planning_general}, and show the existence of minimizers.

\subsection{Assumptions} \label{sec:assms-planning} \vspace{0.3cm}
We assume the following.

\begin{enumerate}
	\item[(H\arabic{hyp})]\label{hipotesi_h_growth} (Conditions on the Hamiltonian I) $H:\bb{T}^d \times \bb{R}^d \to \bb{R}$ is continuous in both variables, convex and differentiable in the second variable $\xi$, with $D_\xi H$ continuous in both variables. Moreover, $H$ has superlinear growth in the gradient variable: there exist $r >1 $ and $C >0$ such that
	\begin{equation}
	\label{eq:hamiltonian_bounds}
	\frac{1}{rC}|\xi|^r-C \leq H(x,\xi) \leq \frac{C}{r}|\xi|^r + C,\ \ \forall\ (x,\xi)\in\T^d\times\R^d.
	\end{equation}
	We denote by $H^*(x,\cdot)$ the Fenchel conjugate of $H(x,\cdot)$, which, due to the above assumptions, satisfies
	\begin{equation}
	\label{eq:hamiltonian_conjugate_bounds}
	\frac{1}{r'C}|\zeta|^{r'}-C \leq H^*(x,\zeta) \leq \frac{C}{r'}|\zeta|^{r'} + C,\ \ \forall\ (x,\zeta)\in\T^d\times\R^d,
	\end{equation}
	where we always use the notation $s' := s/(s-1)$ to denote the conjugate exponent of a number $s \in (1,\infty)$.   \vspace{0.5cm}
	\stepcounter{hyp}
	\item[(H\arabic{hyp})]\label{hipotesi_h_potenza} (Conditions on the Hamiltonian II) The Hamiltonian satisfies 
	\begin{align*}
	H(x,a\xi) \leq aH(x,\xi), \ \ \forall  \;  x\in\T^d, \; \forall \; \xi\in\R^d, \;  \forall a \in [0,1].
	\end{align*}
	A typical Hamiltonian that satisfies (H1) and (H2)  is $H(x,\xi)=b(x)|\xi|^r-c(x)$ for some $b:\T^d\to\R$ continuous positive function and $c:\T^d\to\R$ continuous nonnegative function.

	\stepcounter{hyp} \vspace{0.5cm}
	\item[(H\arabic{hyp})] \label{hyp:coupling} (Conditions on the coupling) The function $f$ is continuous on $\bb{T}^d \times (0,\infty)$, strictly increasing in the second variable. Assume there exist $C>0$ and  $q>1$ such that $r>\max\{d(q-1),1\}$ and  	
	\begin{equation} \label{eq:coupling_growth_f'}
	\frac{1}{C}|m|^{q-1} - C \leq f(x,m) \leq C|m|^{q-1} + C, \hspace{0.3cm} \forall \; m \geq 0,  \; \forall \; x\in\T^d.
	\end{equation}
	With no real loss of generality, we ask for the following normalization:
	\begin{equation}
	\label{eq:normalization}
	f(x,0) =  0 \hspace{0.3cm} \forall  \; x \in \bb{T}^d.
	\end{equation}
	\stepcounter{hyp} \vspace{0.0000001cm}
		\item[(H\arabic{hyp})]  (Conditions on the initial and final measures) The probability measures $m_0$ and $m_T$ are absolutely continuous with respect to $\sL^d\mres\T^d$, with densities still denoted by $m_0$ and $m_T$, respectively. \stepcounter{hyp}
\end{enumerate}\vspace{0.3cm}
We define $F: \T^d \times \RR \to \RR$ so that $F(x,\cdot)$ is an antiderivative of $f(x,\cdot)$ on $(0,\infty)$, that is,
	\begin{equation}
	F(x,m) = \int_0^m f(x,s)\dd s, ~~ \forall ~ m \geq 0.
	\end{equation}
For $m < 0$ we set $F(x,m) = +\infty$.  Note that $F(x,m)\geq 0$ thanks to hypothesis \eqref{eq:normalization}. Moreover, it follows from (H3) that $F$ is continuous on $\bb{T}^d \times [0,\infty)$, for each $x\in \T^d$ the function $F(x,\cdot)$ is strictly convex and differentiable in $(0,+\infty)$, and satisfies the growth condition
	\begin{equation} \label{eq:cost_growth}
	\frac{1}{qC}|m|^q - C \leq F(x,m) \leq \frac{C}{q}|m|^q + C, \hspace{0.3cm} \forall ~ m \geq 0,\ \forall\ x\in\T^d.
	\end{equation}
We define $F^*(x,\cdot):\RR \to \RR$ to be the Fenchel conjugate of $F(x,\cdot)$, i.e. 
$$F^*(x,a) = \sup_{m \geq 0}\left\{ am - F(x,m)\right\}.$$
	Note that $F^{\ast}(x,\cdot)$ is continuous,  increasing and   $F^*(x,a) = 0$ for all $a \leq 0$.
	We also have
	\begin{equation} \label{eq:cost_growth_star}
	\frac{1}{q'C}|a|^{q'} - C \leq F^*(x,a) \leq \frac{C}{q'}|a|^{q'} + C, \hspace{0.3cm} \forall \; a \geq 0,\ \forall \; x\in\T^d.
	\end{equation}

Let us comment on the condition $r > \max\{d(q-1),1\}$ in Hypothesis (H3), since it is the only real growth restriction on the Hamiltonian and coupling terms.
(Note that it is a joint condition: the faster $H$ grows at infinity, the faster $f$ may grow.)
This restriction allows us to prove that a priori $L^\infty$ bounds for a suitable adjoint state $u$, which is a priori merely a subsolution to the Hamilton-Jacobi equation \eqref{eq:planning_general}(i) with no information on initial or final conditions (see the proof of Proposition \ref{prop:existence_solution_for_relaxed}).
To overcome this difficulty, we employ a renormalization trick akin to that found in \cite{CarCarNaz}.
A different strategy is used in \cite{OrrPorSav}, by proving a priori bounds for subsolutions of the Hamilton-Jacobi equation via duality with geodesics in the Wasserstein space.
For this reason their results hold with no growth restriction on the coupling; on the other hand, the Hamiltonian must have quadratic growth.

\subsection{Two optimization problems in duality} \label{sec:2opti-planning}

The planning problem has a variational formulation analogous to what is introduced in \cite{Car15} in the context of MFGs. 

%\begin{comment}
%\corrd{Be careful: $\mathcal K_0$, $\mathcal K_1$, $\mathcal A$, $\mathcal B$ are defined with the same names in Section \ref{sec:2opti} but they are slightly different, maybe better to change the names!}
%\end{comment}

The first optimization problem is described as follows:  let us denote $\mathcal K_0 = \s{C}^1([0,T]\times \T^d)$
and define, on $\mathcal K_0$, the functional 
\begin{multline}
\label{DefmathcalA-planning}
{\mathcal A}(u)= \int_0^T\int_{\T^d}  F^*\left(x,-\partial_tu(t,x)+H(x,\nabla u(t,x)) \right)\dd x\dd t \\
+ \int_{\T^d} u(T,x) m_T(x) \dd x
- \int_{\T^d} u(0,x) m_0(x)\dd x . 
\end{multline}
Notice that,   $F^{\ast}(x,\cdot)$ being increasing and convex, for every $x\in \TT^d$ the function $\RR \times \RR^d \ni (a,b) \mapsto E(x,a,b):=   F^{\ast} \left(x,-a + H(x,b)\right) \in \RR$ is convex and, hence, $\A$ is a convex function. 

The first optimization problem is given by
\begin{equation}
\label{PB:dual2-planning}
\inf_{u\in \mathcal K_0} \mathcal A(u).
\end{equation}
%\begin{remark}\label{remark_regularization_K_0} The continuity of $F^{\ast}$ and a regularization argument yield
%$$\inf_{u\in \mathcal K_0} \mathcal A(u)=\inf_{u\in \mathcal C^{\infty}([0,T] \times \TT^d)} \mathcal A(u).$$
%\end{remark}
Now, suppose that $(m, w)\in L^1((0,T)\times \T^d) \times L^1((0,T)\times \T^d;\R^d)$ are such that  $m(t,x)\geq 0$ for a.e. $(t,x) \in [0,T] \times \T^d$  and the continuity equation 
\begin{equation}\label{conteq-planning}
\partial_t m+{\rm div} (w)=0\; {\rm in}\; (0,T)\times \T^d,\end{equation}
is satisfied in the distributional sense, i.e. for all $\varphi \in C_c^1((0,T) \times \T^d)$ we have
\be\label{continuity_equation_distributional_sense}
\int_{0}^{T} \int_{\T^d} \left[\partial_t \varphi m + \nabla \varphi \cdot w \right] \dd x \dd t= 0.\ee
Let us denote by $\M(\T^d)$  the space of Radon measures over $\T^d$ and by $\M_+(\T^d)$ the subset of $\M( \T^d)$  given by  the non-negative Radon measures over $\TT^d$. By \cite[Lemma 4.1]{DolNazSav} (see also the discussion in \cite{CarCarNaz}), if \eqref{continuity_equation_distributional_sense} holds, then there exists a unique  weakly-$^\ast$ continuous curve $[0,T] \ni t \mapsto \tilde{m}(t) \in \M_{+}(\TT^d)$ such that for a.e. $t\in [0,T]$ the measure $\tilde{m}(t)$ is absolutely continuous w.r.t. the Lebesgue measure, with density given by  $m(t,\cdot)$, and for all   $\varphi \in C^1([0,T] \times \T^d)$ the following equality holds 
\be\label{continuity_equation_prescribed_initial_and_final_conditions_0}
\int_{t_1}^{t_2} \int_{\T^d} \left[\partial_t \varphi m + \nabla \varphi \cdot w \right] \dd x \dd t= \int_{\T^d} \varphi(t_2,x) \dd \tilde{m}(t_2)(x)\dd x -	 \int_{\T^d} \varphi(t_1,x) \dd \tilde{m}(t_1)(x),
\ee
for all $0 \leq t_1 < t_2 \leq T$. 

Define   $\mathcal K_1$ as the set of pairs $(m, w)\in L^1((0,T)\times \T^d) \times L^1((0,T)\times \T^d;\R^d)$ such that $m(t,x)\geq 0$ for a.e. $(t,x) \in [0,T] \times \T^d$, equation \eqref{continuity_equation_distributional_sense} is satisfied in the distributional sense and $\tilde{m}(0)$ and $\tilde{m}(T)$ are absolutely continuous with respect to the Lebesgue measure with densities given by $m_0$ and $m_T$, respectively. Note that \eqref{continuity_equation_prescribed_initial_and_final_conditions_0} implies that the last two requirements are equivalent to the fact that $(m,w)$ satisfies  
\be\label{continuity_equation_prescribed_initial_and_final_conditions}
\int_{0}^{T} \int_{\T^d} \left[\partial_t \varphi m + \nabla \varphi \cdot w \right] \dd x \dd t= \int_{\T^d} \varphi(T,x) m_T(x)\dd x -	 \int_{\T^d} \varphi(0,x) m_0(x)\dd x,
\ee
for all $\varphi \in C^1([0,T] \times \T^d)$. Notice that if $(m,w) \in \K_1$, then $\int_{\TT^d}m_0(x) \dd x= 1$ and \eqref{continuity_equation_prescribed_initial_and_final_conditions_0}  imply that $\int_{\TT^d}m(t,x) \dd x= 1$ for a.e. $t\in [0,T]$. 
%
%with $\ds \int_{\T^d}m(t,x)\dd x=1$ for a.e. $t\in (0,T)$, and which satisfy in the {\color{blue}weak sense} the continuity equation
%\begin{equation}
%\label{conteq-planning}
%\partial_t m+{\rm div} (w)=0\; {\rm in}\; (0,T)\times \T^d, \hspace{0.3cm} m(0)=m_0, \hspace{0.3cm}  m(T) = m_T.
%\end{equation}
%{\color{blue}Namely, for all $\varphi \in C^1([0,T] \times \T^d)$ we have
%\be\label{continuity_equation_prescribed_initial_and_final_conditions}
%\int_{0}^{T} \int_{\T^d} \left[\partial_t \varphi m + \nabla \varphi \cdot w \right] \dd x \dd t= \int_{\T^d} \varphi(T,x) m_T(x)\dd x -	 \int_{\T^d} \varphi(0,x) m_0(x)\dd x.
%\ee}
On $\mathcal K_1$, let us  define 
$$
{\mathcal B}(m,w):=  \int_0^T\int_{\T^d} \left[m(t,x) H^*\left(x, -\frac{w(t,x)}{m(t,x)}\right)+ F\left(x,m(t,x)\right)\right] \dd x\dd t,
$$
where, for $a=0$ and $b\in \RR^d$, we impose that 
$$
a  H^*\left(x, -\frac{b}{a}\right)=\left\{\begin{array}{ll}
+\infty & {\rm if }\; b \neq 0,\\
0 & {\rm if }\; b=0.
\end{array}\right.
$$
Under this definition, it is easy to check that for every $x\in \TT^d$, the function  $\RR_+ \times \RR^d \ni (a,b) \mapsto  a  H^*\left(x, -\frac{b}{a}\right) \in \RR \cup \{+\infty\}$ is proper, convex and lower-semicontinuous and, hence, $\B$ is convex.

The second optimization problem is the following: 
\begin{equation}
\label{Pb:mw2-planning}
\inf_{(m,w)\in \mathcal K_1} \mathcal B(m,w)\;.
\end{equation}
The following lemma is proved using a similar argumentation as in \cite[Proposition 2.1]{CarCarNaz} and \cite[Lemma 2]{Car15}. For the sake of completeness, we provide the details of the proof.

\begin{lemma}\label{Lem:dualite-planning} We have
\begin{equation}
\label{no_duality_gap}
\inf_{u\in \mathcal K_0}{\mathcal A}(u) = - \min_{(m,w)\in \mathcal K_1} {\mathcal B}(m,w). 
\end{equation}
Moreover, there exists a unique $(\bar m, \bar w) \in \K_1$ such that $\ds{\mathcal B}(\bar m,\bar w)=  \min_{(m,w)\in \mathcal K_1} {\mathcal B}(m,w)$. Setting $\ell:= \frac{r'q}{r'+q-1}>1$, this minimizer also satisfies  $(\bar m,\bar w) \in  L^q((0,T)\times \T^d)\times L^{\ell}((0,T)\times \T^d;\R^d)$ and
\begin{equation}
\label{additional_regularity_solution_opt_problem}
\|\bar m\|_{L^q}  + \| \bar w \|_{L^\ell} \leq C,
\end{equation}
where $C>0$ is a constant independent of $m_0$ and $m_T$.
\end{lemma}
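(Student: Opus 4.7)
The plan is to apply the Fenchel--Rockafellar theorem to $\A$ viewed on the Banach space $\K_0 = C^1([0,T]\times\T^d)$, identify the dual problem with $-\inf_{\K_1}\B$, and then obtain existence, uniqueness, and the a priori bound by the direct method together with a coercivity estimate tailored to the exponent pair $(q,\ell)$.

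First, I would write $\A$ in an additive form suitable for duality by introducing the continuous linear operator $\Lambda u := (\partial_t u,\nabla u)$ from $C^1([0,T]\times\T^d)$ into $C([0,T]\times\T^d;\R\times\R^d)$, and setting
$$\mathcal F(a,b) := \int_0^T\!\!\int_{\T^d} F^*\bigl(x,-a+H(x,b)\bigr)\dd x\dd t,\qquad \mathcal G(u) := \int_{\T^d} u(T,\cdot)\,m_T - \int_{\T^d} u(0,\cdot)\,m_0,$$
so that $\A = \mathcal F\circ\Lambda + \mathcal G$. The functional $\mathcal F$ is convex (as already noted), $\mathcal G$ is linear, and $\mathcal F(0,0) = \int\!\!\int F^*(x,H(x,0))\,\dd x\dd t$ is finite with $\mathcal F$ continuous at $(0,0)$ in the uniform norm by dominated convergence and \eqref{eq:hamiltonian_bounds}--\eqref{eq:cost_growth_star}. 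This furnishes the qualification condition for Fenchel--Rockafellar duality.

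Next I would compute the Legendre conjugates. A direct calculation identifies $\mathcal F^*(m,w)$ with the integrand of $\B$ (using the perspective convention at $m=0$) on the cone $\{m \geq 0\}$ and $+\infty$ otherwise. Integration by parts then gives $\Lambda^*$, and $\mathcal G^*(-\Lambda^*(m,w))$ vanishes exactly when $(m,w)$ satisfies the distributional continuity equation \eqref{continuity_equation_prescribed_initial_and_final_conditions_0} with endpoints $m_0, m_T$, and equals $+\infty$ otherwise. The superlinear growth in (H1) and (H3) forces the dual extremizers to be Lebesgue-absolutely continuous with densities in $L^1$, so the dual supremum is attained over $\K_1$. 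Fenchel--Rockafellar thus yields $\inf_{\K_0}\A = -\inf_{\K_1}\B$.

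Finally I would derive existence, uniqueness, and the a priori estimate. From $F(x,m)\geq \frac{1}{qC}m^q - C$ and $H^*(x,\zeta)\geq \frac{1}{r'C}|\zeta|^{r'} - C$, any $(m,w) \in \K_1$ satisfies
$$\B(m,w) \geq \frac{1}{qC}\|m\|_{L^q}^q + \frac{1}{r'C}\int_0^T\!\!\int_{\T^d}\frac{|w|^{r'}}{m^{r'-1}}\dd x\dd t - C',$$
and H\"older's inequality with $\ell = qr'/(q+r'-1)$ yields $\|w\|_{L^\ell}^{r'} \leq \|m\|_{L^q}^{q(r'-\ell)/\ell}\int\!\!\int|w|^{r'}/m^{r'-1}$, giving coercivity of $\B$ on $L^q\times L^\ell$. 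Combined with the uniform upper bound $\inf \B \leq C$ furnished by the duality identity (where $C$ depends only on the constants in (H1)--(H3), $T$, $d$, and the probability normalization $\|m_0\|_{L^1} = \|m_T\|_{L^1} = 1$), any minimizing sequence is bounded in $L^q\times L^\ell$. Weak compactness, weak closedness of the linear constraint $\K_1$, and weak lower semicontinuity of $\B$ produce a minimizer $(\bar m, \bar w)$ satisfying \eqref{additional_regularity_solution_opt_problem}. Uniqueness of $\bar m$ comes from strict convexity of $F(x,\cdot)$; uniqueness of $\bar w$ comes from strict convexity of $H^*(x,\cdot)$ on the interior of its domain (valid since $H(x,\cdot)$ is differentiable by (H1)) together with the convention forcing $\bar w = 0$ on $\{\bar m = 0\}$. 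The main technical obstacle, as in \cite{CarCarNaz,Car15}, is the rigorous computation of $\mathcal F^*$ and of the constraint set arising from $\mathcal G^*\circ(-\Lambda^*)$, specifically showing that the dual maximizers are absolutely continuous measures with the sharp integrability claimed, rather than merely Radon measures.
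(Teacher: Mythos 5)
Your overall architecture is the same as the paper's: split $\A = \mathcal F\circ\Lambda + \mathcal G$, apply Fenchel--Rockafellar, compute the conjugates via Rockafellar's theorem on integral functionals over $C^0$ (with the recession function killing singular parts), and then get the a priori bound and uniqueness by the coercivity/H\"older computation from \cite[Lemma 2]{Car15}. Those parts are fine; in particular your exponent bookkeeping for $\ell = r'q/(r'+q-1)$ is correct.

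There is, however, a genuine gap at the single most delicate point of the argument: you never establish that $\inf_{u\in\K_0}\A(u) > -\infty$, let alone that it is bounded below by a constant independent of $m_0$ and $m_T$. This matters twice. First, the version of Fenchel--Rockafellar you are invoking (Ekeland--Temam, Ch.~III, Thm.~4.1) takes finiteness of the primal infimum as a hypothesis alongside the continuity qualification, so your duality identity \eqref{no_duality_gap} is not yet justified. Second, and more importantly, your claim that ``the uniform upper bound $\inf\B\le C$ is furnished by the duality identity'' is circular: the identity only gives $\min\B = -\inf\A$, and the boundary term $\int u(T)m_T - \int u(0)m_0$ can a priori be arbitrarily negative while $\int\!\!\int F^*(-\partial_t u + H(\nabla u))$ stays bounded, since nothing in $\A$ directly penalizes the size of $u$. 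The paper closes this by normalizing $\min_x u(T,x)=0$ and invoking the oscillation estimate for subsolutions of $-\partial_t u + H(x,\nabla u)\le\alpha$ (Lemma \ref{one_holder_bound}, i.e.\ \cite[Lemma 2.7]{CarGra}), which yields $u(0,x)\le \inf_y u(T,y) + C(1+\|\alpha_+\|_{L^{q'}})$ as in \eqref{inequality_with_infimum}; combined with the $q'$-coercivity of $F^*$ this gives the lower bound \eqref{definition_underline_c}, uniform in $m_0,m_T$. This is precisely where the structural hypothesis $r>\max\{d(q-1),1\}$ from (H3) enters (through the positivity of the exponent $\nu$), and it cannot be bypassed: without it neither the finiteness of the infimum nor the $m_0,m_T$-independence of the constant in \eqref{additional_regularity_solution_opt_problem} is available. (Trying instead to bound $\inf\A$ from below via weak duality would require exhibiting a feasible $(m,w)\in\K_1$ with finite energy, which for general $L^1$ endpoint densities is not obvious and would in any case produce a constant depending on $m_0,m_T$.) The rest of your outline — conjugate computation, direct method, strict convexity for uniqueness — matches the paper once this estimate is in place.
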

\begin{proof}
Let $\mathcal H_0:=\s{C}^0([0,T]\times \T^d) \times \s{C}^0([0,T]\times \T^d; \RR^d)$ and define the bounded linear operator $\Lambda:  \K_0 \to\mathcal H_0$ by $\Lambda u= (\partial_t u, \nabla u)$, and the functionals $J_1:\mathcal H_0 \to \RR$, $J_2:  \K_0 \to \RR$, respectively, by
$$\ba{rcl}
\ds J_1 \left(\phi_1, \phi_2 \right)&=& \ds\int_{0}^{T} \int_{\T^d} E \left(x,\phi_1(x), \phi_2(x)\right) \dd x \dd t, \\[8pt]
\ds J_2 (u)&=&  \ds\int_{\TT^d} u(T,x) m_T(x) \dd x -  \int_{\TT^d} u(0,x) m_0(x) \dd x,
\ea$$
where we recall that $ E(x,a,b):= F^{\ast}(x,-a + H(x,b))$ for all $x\in \T^d$, $a \in \RR$ and $b \in \RR^d$. 
Thus, problem \eqref{PB:dual2-planning} can be rewritten as 
\begin{equation}
\label{rewritting_cost_functional}
\inf_{u \in \K_0} \left\{J_1\left( \Lambda u\right) +J_2(u)\right\}.
\end{equation}
%Taking, for instance, $\hat{u}(x,t)= -ct$ for all $t\in [0,T]$, with $c>0$, we have that $J_1(L \hat{u})< \infty$ (if $c$ is large enough),    $J_2(\hat{u}) < \infty$, and $J_1$ is continuous at  $L \hat{u}$. Moreover, 
Since 
$$
J_1\left( \Lambda\left(u -\min_{x \in \TT^d} u(T,x)\right)\right)+ J_2\left(u -\min_{x \in \TT^d} u(T,x)\right)= J_1(\Lambda u) + J_2(u), \hspace{0.3cm} \forall \; u \in \K_0,
$$
we can assume that the infimum in \eqref{rewritting_cost_functional} is taken over  $u\in \K_0$ such that $\inf_{x\in \TT^d} u(T,x)=0$. Using this fact, \eqref{eq:cost_growth_star}, estimate \eqref{inequality_with_infimum} (proved  after Lemma \ref{one_holder_bound} below), and setting $\corrd{\bar a}:= \| -\partial_tu +H(\cdot,\nabla u) \|_{L^{q'}}$, we get the existence of $c_1>0$, $c_2\in \RR$ and $c_3 \in \RR$ (independent of $m_0$ and $m_T$) such that 
\begin{equation}
\label{definition_underline_c}
J_1\left( \Lambda u\right) +J_2(u) \geq     c_1 \bar a^{q'} + c_2 \bar a  +c_3 \geq \underline{c}:=\inf_{\tau \in \RR_+} \left\{c_1 \tau^{q'} + c_2 \tau\right\}  +c_3 > -\infty.
\end{equation}
This proves that the infimum in \eqref{rewritting_cost_functional} is finite. Using that $J_1$ and $J_2$ are continuous, by the Fenchel-Rockafellar theorem (see e.g. \cite[Chapter 3, Theorem 4.1]{ekeland-temam}) we have that
\begin{equation}
\label{opitmization_with_Js}
\inf_{u\in \mathcal K_0}{\mathcal A}(u)=  - \min \left\{ J_1^{\ast}(-(m,w)) + J_{2}^{\ast}(\Lambda^{\ast}(m,w))  | \; (m,w)\in \mathcal H_0^*\; \right\},
\end{equation}
where $\mathcal H_0^*=\M((0,T)\times \TT^d)) \times \M((0,T)\times \TT^d)^d $.
 Let us provide a more explicit expression of the right hand side above.  By \cite[Theorem 5]{Roc71}, we have that 
$$
J_1^{\ast}(m,w)= \int_{0}^{T} \int_{\T^d} E^{\ast}(x,m^{ac},w^{ac}) \dd x \dd t +  \int_{0}^{T} \int_{\T^d}  E_{\infty}^{\ast}\left(x,\frac{\dd m^{s}}{\dd \theta},\frac{\dd w^{s}}{\dd \theta}\right)\dd \theta (t,x),
$$
where $(m^{ac},w^{ac})$ and $(m^{s},w^{s})$ denote, respectively, the absolutely continuous and singular parts of $(m,w)$ w.r.t. the Lebesgue measure, $\theta \in \M((0,T) \times \TT^d)$ is any Radon measure such that $(m^s,w^s)$ is absolutely continuous w.r.t. $\theta$, and $E_{\infty}^{\ast}(x,\cdot,\cdot)$ is the recession function of $E^{\ast}(x,\cdot,\cdot)$. We easily check that 
\begin{equation}
\label{computation_E_ast}
E^{\ast}(x,m,w)= \left\{ \ba{ll}
- m H^{\ast}\left(x,-\frac{w}{m} \right) + F(-m),  & \mbox{if } m<0, \\[6pt]
0,  & \mbox{if } (m,w)=(0,0),\\[6pt]
+\infty, & \mbox{otherwise}.\ea \right.
\end{equation}
Since $E^{\ast}(x,0,0)=0 < +\infty$,  the recession function can be computed as follows
$$E_{\infty}^{\ast}\left(x,h_m,h_w\right)= \lim_{\lambda \to +\infty} \frac{E^{\ast}(x,\lambda h_m, \lambda h_w)}{\lambda}=  \left\{ \ba{ll} 0, & \mbox{if } (h_m,h_w)=(0,0), \\[6pt]
+ \infty,  & \mbox{otherwise.}\ea\right.
$$
In the second equality above, we have used \eqref{eq:cost_growth_star}. We deduce that if $(m,w) \notin  L^1((0,T)\times \T^d) \times L^1((0,T)\times \T^d;\R^d)$, then $J_1^{\ast}(m,w) = +\infty$. If $(m,w) \in  L^1((0,T)\times \T^d) \times L^1((0,T)\times \T^d;\R^d)$, then 
$$
J_{2}^{\ast}(\Lambda^{\ast}(m,w))=\sup_{u \in \K_0} \left\{\langle   \Lambda^{\ast}(m,w),u \rangle_{\K_0^{\ast},\K_0} - \int_{\TT^d} u(T,x) m_T(x) \dd x + \int_{\TT^d} u(0,x) m_0(x) \dd x\right\},
$$
where $\langle \cdot, \cdot \rangle_{\K_0^{\ast},\K_0}$ denotes the duality product between $\K_0$ and $\K_0^{\ast}$. 
Using that 
$$
\langle   \Lambda^{\ast}(m,w),u \rangle_{\K_0^{\ast},\K_0}= \langle   (m,w),\Lambda u \rangle_{\mathcal H_0^*,\mathcal H_0}= \int_{0}^{T} \int_{\T^d} \left[  m\partial_t u +  w \cdot \nabla u \right] \dd x \dd t,
$$
we get that $J_{2}^{\ast}(\Lambda^{\ast}(m,w))<+\infty$ if and only if $J_{2}^{\ast}(\Lambda^{\ast}(m,w))=0$, which is equivalent to the fact that  $(m,w)$ satisfies \eqref{continuity_equation_prescribed_initial_and_final_conditions}. We conclude that  the optimization problem in the r.h.s. of \eqref{opitmization_with_Js} admits a solution $(\bar{m},\bar{w})$, is equivalent to problem \eqref{Pb:mw2-planning} and, hence, \eqref{no_duality_gap} holds true. 

By \eqref{definition_underline_c}, we have  $\mathcal B(\bar{m},\bar{w}) \leq -\underline{c}$ with  $\underline{c}$ independent of $m_0$ and $m_T$. Using this estimate and  arguing as in  the proof of  \cite[Lemma 2]{Car15}, we easily obtain \eqref{additional_regularity_solution_opt_problem}.  Finally, the uniqueness of the solution $(\bar m,\bar w)$ to  \eqref{Pb:mw2-planning} follows exactly as in the proof of  \cite[Lemma 2]{Car15}.
\end{proof}
\begin{remark} The previous proof shows that the results in Lemma \ref{Lem:dualite-planning} are  valid also when $m_0$ and $m_T$ belong  to $\sP(\TT^d)$, without any summability assumptions. 
\end{remark}
Now, we consider a relaxation of Problem \eqref{PB:dual2-planning}.
Let ${\mathcal K}$ be the set of pairs $(u,\alpha)\in BV((0,T)\times \T^d)\times  L^{q'}((0,T)\times \T^d)$ such that    $\nabla u\in L^r((0,T)\times \T^d;\R^d)$, $u \in L^{\infty}((0,T)\times \T^d)$, the traces $u(0,\cdot)$, $u(T,\cdot)$ of $u$ on $\{0\} \times \TT^d$ and $\{T\} \times \TT^d$ (see e.g. \cite[Section 3.8]{AmbFusPal}), respectively, belong to $L^{\infty}(\TT^d)$,   and 
\begin{equation*} 
-\partial_t u + H(x,\nabla u)\le \a
\end{equation*}
holds in the sense of distributions on $(0,T) \times \T^d.$

We extend the functional $\mathcal A$ to  ${\mathcal K}$ by setting 
$$
\mathcal A(u,\alpha) = \int_0^T\int_{\T^d}  F^*(x,\alpha(t,x)) \dd x\dd t
+ \int_{\T^d} u(T,x)m_T(x) \dd x - \int_{\T^d} u(0,x)m_0(x) \dd x \hspace{0.3cm} \forall \; (u, \alpha)\in {\mathcal K}.
$$

We consider  the following relaxation of Problem \eqref{PB:dual2-planning}:
\begin{equation}
\label{PB:dual-relaxed-plan}
\inf_{(u, \alpha)\in {\mathcal K}} \mathcal A(u,\alpha)
\end{equation}

\begin{proposition}\label{prop:weakequalsstrong-plan} We have
	\begin{equation}
	\label{equality_relaxed_non_relaxed_problems}
	\inf_{u\in \mathcal K_0} \mathcal A(u)= \inf_{(u, \alpha)\in {\mathcal K}} \mathcal A(u,\alpha)= -\min_{(m,w) \in \K_1} \B(m,w).
	\end{equation}
\end{proposition}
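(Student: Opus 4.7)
The rightmost equality in \eqref{equality_relaxed_non_relaxed_problems} is already established by Lemma \ref{Lem:dualite-planning}, so the task reduces to proving $\inf_{u\in \K_0} \A(u) = \inf_{(u,\alpha)\in \K}\A(u,\alpha)$. The inequality ``$\geq$'' is immediate: for any $u \in \K_0$, the choice $\alpha := -\partial_t u + H(\cdot,\nabla u)$ is bounded on the compact domain $[0,T]\times \T^d$, hence lies in $L^{q'}$, and makes $(u,\alpha)\in \K$ with $\A(u,\alpha)=\A(u)$.

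The reverse inequality ``$\leq$'' is the content of the proof. Fix $(u,\alpha)\in \K$ with $\A(u,\alpha)<\infty$; I would construct a sequence $(u_\epsilon)\subset \K_0$ with $\limsup_\epsilon \A(u_\epsilon)\leq \A(u,\alpha)$ via mollification. First, extend $u$ from $(0,T)\times \T^d$ to $\R\times \T^d$ by setting $u(t,\cdot):=u(0,\cdot)$ for $t<0$ and $u(t,\cdot):=u(T,\cdot)$ for $t>T$; since the traces are in $L^{\infty}(\T^d)$, the extension remains bounded and BV. Then, for a standard symmetric mollifier $\rho_\epsilon$ on $\R\times \T^d$, define $u_\epsilon := \rho_\epsilon \ast u$. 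The regularized $u_\epsilon$ is smooth, $\nabla u_\epsilon \to \nabla u$ in $L^r((0,T)\times \T^d;\R^d)$, and the traces $u_\epsilon(0,\cdot)$, $u_\epsilon(T,\cdot)$ converge to $u(0,\cdot)$, $u(T,\cdot)$ in, say, $L^1(\T^d)$ while remaining uniformly bounded.

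The heart of the argument is a Jensen-type inequality with $x$-frozen. Convexity of $H(x,\cdot)$ gives
\[
H\bigl(x,\nabla u_\epsilon(t,x)\bigr) \leq \int \rho_\epsilon(s,y)\, H\bigl(x,\nabla u(t-s,x-y)\bigr)\dd s\dd y,
\]
and replacing $H(x,\cdot)$ by $H(x-y,\cdot)$ under the integral costs an error controlled by the modulus of continuity of $H$ in the first variable combined with the uniform growth \eqref{eq:hamiltonian_bounds} and the $L^r$ bound on $\nabla u$; this error tends to zero in $L^1$. Combined with $-\partial_t u_\epsilon = \rho_\epsilon \ast(-\partial_t u)$ and the distributional inequality $-\partial_t u + H(x,\nabla u)\leq \alpha$ on $(0,T)\times \T^d$, this yields, on any compact subinterval of $(0,T)$,
\[
-\partial_t u_\epsilon(t,x) + H\bigl(x,\nabla u_\epsilon(t,x)\bigr) \leq \rho_\epsilon \ast \alpha(t,x) + \sigma_\epsilon(t,x),
\]
with $\sigma_\epsilon \to 0$ in $L^{q'}$ and $\rho_\epsilon \ast \alpha \to \alpha$ in $L^{q'}$.

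By monotonicity of $F^*(x,\cdot)$ and the growth \eqref{eq:cost_growth_star}, the above inequality and dominated convergence allow passing to the limit in the principal integral of $\A(u_\epsilon)$. The boundary terms $\int u_\epsilon(T,\cdot)m_T - \int u_\epsilon(0,\cdot)m_0$ converge to the corresponding terms with $u$ because the mollified traces converge in $L^1(\T^d)$ while $m_0,m_T\in L^1(\T^d)$ and the traces remain uniformly bounded; alternatively, one may first regularize only in space to control these terms exactly, then regularize in time. Altogether $\limsup_\epsilon \A(u_\epsilon)\leq \A(u,\alpha)$, completing the proof. The principal obstacle is a clean control of the two error sources: the $x$-shift in the Jensen estimate (which requires the modulus-of-continuity argument combined with the uniform bound on $\|\nabla u\|_{L^r}$) and the treatment of the boundary in time (where one must choose a representative of $u$ whose pointwise traces coincide with those used in $\A(u,\alpha)$ so that the boundary terms pass to the limit without picking up spurious jumps from the BV extension).
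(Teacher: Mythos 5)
Your easy inequality $\inf_{\K}\A(u,\alpha)\le\inf_{\K_0}\A(u)$ is fine, but the converse direction via mollification of an arbitrary $(u,\alpha)\in\K$ is not how the paper proceeds, and as written it has two genuine gaps. First, the error $\sigma_\epsilon$ incurred when un-freezing the first argument of $H$ in the Jensen step is controlled only in $L^1$, not in $L^{q'}$: by \eqref{eq:hamiltonian_bounds} the best available bound (after truncating in $\xi$ and using uniform continuity on compacts) is of the form $\sigma_\epsilon \le \omega(\epsilon)\,\rho_\epsilon\ast(|\nabla u|^r+1) + o(1)$ in $L^1$, and $|\nabla u|^r$ is merely an $L^1$ function since $\nabla u\in L^r$; its mollifications stay bounded in $L^1$ but not in $L^{q'}$. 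Because $F^*(x,\cdot)$ grows like $|\cdot|^{q'}$ by \eqref{eq:cost_growth_star}, the monotonicity bound $F^*(x,-\partial_t u_\epsilon+H(x,\nabla u_\epsilon))\le F^*(x,\rho_\epsilon\ast\alpha+\sigma_\epsilon)$ does not allow passage to the limit: a perturbation that is small only in $L^1$ can make $\int F^*$ blow up. Your assertion that $\sigma_\epsilon\to 0$ in $L^{q'}$ is therefore unjustified and false in general under (H1). Second, the subsolution inequality defining $\K$ holds only in the sense of distributions on the open cylinder $(0,T)\times\T^d$, so after extending $u$ by its traces and mollifying there is no usable inequality on the layers $(0,\epsilon)$ and $(T-\epsilon,T)$: there $\nabla u_\epsilon$ involves spatial derivatives of traces that are merely $L^\infty$, so neither $\|\nabla u_\epsilon\|_{L^r}$ nor the corresponding piece of $\int F^*$ is uniformly controlled. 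Your caveat ``on any compact subinterval of $(0,T)$'' concedes exactly the region that is harmless, while $\A$ integrates over all of $(0,T)$.

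The paper never proves (and does not need) such a density statement. It establishes the sandwich
\begin{equation*}
-\min_{(m,w)\in\K_1}\B(m,w) \;=\; \inf_{u\in\K_0}\A(u) \;\ge\; \inf_{(u,\alpha)\in\K}\A(u,\alpha) \;\ge\; -\min_{(m,w)\in\K_1}\B(m,w),
\end{equation*}
where the equality is Lemma \ref{Lem:dualite-planning} (Fenchel--Rockafellar, carried out entirely on smooth competitors), the first inequality is your trivial one, and the last follows by adding \eqref{ineq:ineqfirst-plan} and \eqref{ineq:ineqfirst-plan_i} from Lemma \ref{lem:phialphamw-plan} and then applying Young's inequality $\alpha m\le F(x,m)+F^*(x,\alpha)$. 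In Lemma \ref{lem:phialphamw-plan} it is the pair $(m,w)$ that is mollified, not $u$; this one-sided duality inequality is the standard device in this literature precisely because direct smoothing of relaxed subsolutions fails for the reasons above. I recommend reworking your argument along these lines.
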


The proof of Proposition \ref{prop:weakequalsstrong-plan} follows easily once we have the following technical lemma.
\begin{lemma}\label{lem:phialphamw-plan} Let $(u,\alpha)\in {\mathcal K}$ and $(m,w)\in {\mathcal K}_1$. Assume that $mH^*(\cdot, -w/m)\in L^1((0,T)\times \T^d)$, $m \in L^q((0,T) \times \bb{T}^d)$, and $m_0$, $m_T  \in L^1(\bb{T}^d)$. Then $\alpha m \in L^1((0,T) \times \bb{T}^d)$, and for almost all $t \in (0,T)$ we have
	\begin{equation}
	\label{ineq:ineqfirst-plan}
	\int_{\T^d} (u(T) m_T -  u(t)m(t)) \dd x  + \int_t^{T}\int_{\T^d} m \left[\alpha  + H^*\left(x,-\frac{w}{m}\right)\right]\dd x \dd t \; \geq \; 0,
	\end{equation}
	and 
	\begin{equation}
	\label{ineq:ineqfirst-plan_i}
	\int_{\T^d} (u(t)m(t) - u(0)m_0) \dd x + \int_0^{t}\int_{\T^d} m \left[\alpha  + H^*\left(x,-\frac{w}{m}\right)\right]\dd x \dd t \; \geq \; 0.
	\end{equation}
	Moreover, if equality holds in the inequality \eqref{ineq:ineqfirst-plan} for $t=0$, then $w= -mD_\xi H(\cdot,\nabla u)$ a.e.~and $-\partial_t u^{\rm{ac}}(t,x) + H(x,\nabla u(t,x)) = \alpha(t,x)$ for $m$-a.e.~$(t,x) \in (0,T) \times \bb{T}^d$, where $\partial_t u^{\rm{ac}}$ is the absolutely continuous part of the measure $\partial_t u$.
\end{lemma}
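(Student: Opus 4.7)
The integrability $\alpha m \in L^1$ is immediate from H\"older's inequality with conjugate exponents $q,q'$, since $\alpha \in L^{q'}((0,T)\times\T^d)$ by membership in $\K$ and $m \in L^q((0,T)\times \T^d)$ by hypothesis. The heart of the proof of \eqref{ineq:ineqfirst-plan} is a formal duality computation: multiply the distributional subsolution inequality $-\partial_t u + H(x,\nabla u) \leq \alpha$ by $m\geq 0$, integrate over $(t,T)\times\T^d$, use the continuity equation $\partial_t m + \nabla\cdot w = 0$ to transfer the time derivative from $u$ onto $m$ (producing the boundary pairings $\int u(T)m_T\dd x - \int u(t)m(t)\dd x$ and a cross term $\int \nabla u\cdot w$), and finally apply the Fenchel--Young inequality
\begin{equation*}
-w\cdot \nabla u - mH(x,\nabla u) \leq mH^{\ast}(x,-w/m)
\end{equation*}
to absorb the cross term. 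The identical computation on $(0,t)\times\T^d$ produces \eqref{ineq:ineqfirst-plan_i}.

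To make this rigorous I would regularize $u$ in the time variable alone, setting $u_\varepsilon(t,x) := \int_\R \rho_\varepsilon(s)\, u(t-s,x)\dd s$, where $u$ is extended past $t=0,T$ by its traces (which lie in $L^\infty(\T^d)$ by definition of $\K$). Since $H$ is convex in $\xi$, Jensen gives $H(x,\nabla u_\varepsilon)\leq \rho_\varepsilon *_t H(x,\nabla u)$, and convolving the distributional subsolution inequality in time therefore yields the \emph{pointwise} a.e.\ inequality $-\partial_t u_\varepsilon + H(x,\nabla u_\varepsilon) \leq \alpha_\varepsilon$ on an interior strip. Since $u_\varepsilon$ is smooth in $t$ and admissible as a test function in \eqref{continuity_equation_prescribed_initial_and_final_conditions_0}, the duality computation above becomes fully rigorous at the $\varepsilon$-level. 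Passing $\varepsilon \to 0$ is controlled by the growth bounds \eqref{eq:hamiltonian_bounds}--\eqref{eq:cost_growth_star} together with the assumed integrabilities of $m$, $w$, $\alpha$, and $mH^{\ast}(\cdot,-w/m)$; the boundary pairings converge by dominated convergence, using the $L^\infty$ bounds on the traces together with $m_0,m_T\in L^1(\T^d)$.

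For the rigidity statement, equality in \eqref{ineq:ineqfirst-plan} at $t=0$ forces equality $m$-a.e. in both inequalities employed above. Equality in Fenchel--Young, combined with the differentiability of $H$ in $\xi$ (Hypothesis (H1)), yields $-w/m = D_\xi H(x,\nabla u)$ on $\{m>0\}$; the convention $w=0$ on $\{m=0\}$ encoded in the definition of $\B$ (which is forced by the finiteness of $mH^\ast(\cdot,-w/m)$) then gives $w = -mD_\xi H(x,\nabla u)$ a.e. Equality in the HJ inequality tested against $m$ forces $-\partial_t u^{\rm{ac}}(t,x) + H(x,\nabla u(t,x)) = \alpha(t,x)$ for $m$-a.e.\ $(t,x)$; the singular part $(\partial_t u)^s$ is necessarily a nonnegative measure (because $\alpha + \partial_t u - H(\cdot,\nabla u)\geq 0$ distributionally), contributes nonnegatively when paired with $m\geq 0$, and therefore must vanish on the support of $m$ in the equality case.

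The main technical obstacle I expect is the interplay between the BV-in-time regularity of $u$ (so that $\partial_t u$ is only a Radon measure, potentially with a singular part) and the mere $L^1$ summability of $m$: neither a naive chain rule nor direct integration by parts is available. The mollification in time circumvents this, but the limit procedure requires careful bookkeeping of the singular part of $\partial_t u$ and correct identification of the boundary traces $u(0,\cdot), u(T,\cdot)$ with the weak-$\ast$ limits $\tilde m(0)=m_0$, $\tilde m(T)=m_T$ furnished before the lemma. The $L^\infty$ bounds built into the definition of $\K$ are precisely what keep all boundary pairings finite and provide the uniform estimates needed to close the argument.
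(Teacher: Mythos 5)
Your high-level plan (mollify, test, Fenchel--Young, pass to the limit) is the right family of argument, but the specific regularization you chose --- smoothing $u$ in time and then using $u_\varepsilon$ as a test function in the continuity equation --- breaks down at the integrability level, and this is precisely the difficulty the lemma is designed to overcome. Once you ``transfer the time derivative onto $m$'' you must make sense of the cross term $\int \nabla u_\varepsilon\cdot w$ and of $\int H(x,\nabla u_\varepsilon)\,m$ separately before you can recombine them via Fenchel--Young. Neither is a priori finite: $\nabla u\in L^r$ while $w$ is only in $L^{\ell}$ with $\ell=\frac{r'q}{r'+q-1}<r'$ (so $\nabla u\cdot w\notin L^1$ in general), and $H(x,\nabla u)\sim|\nabla u|^r\in L^1$ while $m$ is only in $L^q$ (so $mH(x,\nabla u)\notin L^1$ in general). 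There is also a structural problem with mollifying in time only: $\partial_t u$ is merely a Radon measure on $(0,T)\times\T^d$, so $\rho_\varepsilon *_t \partial_t u$ is still only a measure in $x$ for each $t$ and cannot be paired pointwise with $m\in L^q$. Finally, your extension of $u$ beyond $t=0,T$ by its traces destroys the $L^r$ bound on $\nabla u$ near the endpoints, and the boundary pairing you dismiss by ``dominated convergence'' is really a product of two only weakly convergent objects ($u_\varepsilon$ near $t=T$ and $m(t)$ as $t\to T$), which does not pass to the limit for free.

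The paper's proof regularizes the \emph{other} side of the duality: it extends $(m,w)$ by $(m_0,0)$ and $(m(T),0)$ outside $[0,T]$ (preserving the continuity equation), mollifies with an anisotropic kernel $\xi_\varepsilon=\eta_{\delta(\varepsilon)}\otimes\psi_\varepsilon$, and tests the distributional subsolution inequality against the smooth, strictly positive $m_\varepsilon$. Then $w_\varepsilon$ and $m_\varepsilon$ are bounded and smooth, so both $w_\varepsilon\cdot\nabla u$ and $m_\varepsilon H(x,\nabla u)$ are integrable, Fenchel--Young is applied \emph{at the $\varepsilon$-level} (discarding the ill-behaved terms before any limit is taken), and only $\int m_\varepsilon H^*(x,-w_\varepsilon/m_\varepsilon)\to\int mH^*(x,-w/m)$ needs to be justified, via the joint convexity of $(m,w)\mapsto mH^*(x,-w/m)$ and the assumed finiteness of the limit. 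The boundary trace $m_\varepsilon(T)\to m_T$ in $L^1$ is then recovered by tuning $\delta(\varepsilon)$ against $\int_{T-\delta}^T\!\int|w|$. If you want to salvage your write-up, you should swap which object gets mollified; as written, the central step of your argument manipulates quantities that are not known to be finite. (Your rigidity discussion, including the observation that $(\partial_t u)^s\ge 0$, is correct in spirit, but it rests on the main identity.)
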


\begin{proof}
	The proof is an adaptation of the argument seen in %\cite[Lemma 4.3]{CarGraPorTon} as well as 
\cite[Lemma 2.4]{CarGra}.
	We will prove \eqref{ineq:ineqfirst-plan}; the proof of \eqref{ineq:ineqfirst-plan_i} is analogous.
	%\corrd{ATTENTION: I removed the part for $\alpha_+ \in L^{q'}([0,T]\times\T^d)$.}
%	Without loss of generality we can assume $\alpha \in L^{q'}([0,T]\times\T^d)$ (see also the end of the proof of \cite[Lemma 2.4]{CarGra}). We will show later that this is indeed not a restriction.
	\begin{uncomment}
	{\color{blue}{\bf Be careful, in the proof of \cite[Lemma 2.4]{CarGra} a continuity assumption on $m_0$ is used. We should try to extend the result to the case stated above using the regularization technique of the analogous proof in the Appendix for dealing with $\beta$, or, alternatively, arguing exactly as in the proof of \cite[Lemma 2.4]{CarGra} but using Lusin theorem to pass from $L^1$ to $C$ on a compact set with almost complete measure contained in $\T^d$. }}
	\end{uncomment}

	We first extend $(m,w)$ to $\bb{R}\times \bb{T}^d$ by setting $(m,w)=(m_0,0)$ on $(-\infty,0)\times \bb{T}^d$ and $(m,w)= (m(T),0)$ on $(T, +\infty)\times \bb{T}^d$. Note that we still have $\partial_t m+ {\mathrm{div}}~w = 0$ on $\bb{R}\times \bb{T}^d$.
	For $\e>0$, let $\xi_\e(t,x)$ be a sequence of smooth convolution kernels that we will define below. Define $m_\e := \xi_\e \ast m$ and $w_\e := \xi_\e \ast w$. Then $m_\e$ and $w_\e$ are $C^\infty$ smooth, $m_\e > 0$, and
	\begin{equation}
	\partial_t m_\e + {\mathrm{div}}~w_\e = 0.
	\end{equation}
	Recalling that $-\partial_t u + H(x,\nabla u) \leq \alpha$ in the sense of distributions, we deduce
	\begin{multline} \label{eq:ineq eps}
	\int_{\bb{T}^d} u(t)m_\e(t)\dd x - \int_{\bb{T}^d} u(T)m_\e(T)\dd x
	\leq  \int_t^T \int_{\bb{T}^d} \left[-w_\e \cdot \nabla u - m_\e H(x,\nabla u) + m_\e \alpha\right]\dd x\dd t\\
	\leq  \int_t^T \int_{\bb{T}^d} \left[m_\e H^*\left(x,-\frac{w_\e}{m_\e}\right) + m_\e \alpha\right]\dd x\dd t
	\end{multline}
	for any $t \in (0,T)$.
	As $\e \to 0$, we have that $m_\e \to m$ in $L^q((0,T) \times \bb{T}^d)$, and in particular $m_\e(t) \to m(t)$ in $L^q(\bb{T}^d)$ for almost every $t \in (0,T)$, while $m_\e \alpha \to m \alpha$ in $L^1((0,T)\times\T^d)$ since $\alpha \in L^{q'}((0,T)\times\T^d)$. Thus as $u \in L^\infty((0,T) \times \bb{T}^d)$, we get $\int_{\T^d} u(t)m_\e(t)\dd x \to \int_{\T^d} u(t)m(t)\dd x$ for almost every $t \in (0,T)$.
	On the other hand, by the argument given in \cite[Lemma 2.4]{CarGra}, we have
	\begin{equation} \label{eq:limit_L}
	\lim_{\e \to 0}  \int_t^T \int_{\bb{T}^d} m_\e H^*\left(x,-\frac{w_\e}{m_\e}\right)\dd x\dd t =  \int_t^T \int_{\bb{T}^d} m H^*\left(x,-\frac{w}{m}\right)\dd x\dd t.
	\end{equation}
	Then
	\begin{equation}
	\int_{\bb{T}^d} u(t)m(t)\dd x \leq \limsup_{\e \to 0} \int_{\bb{T}^d} u(T)m_\e(T)\dd x + \int_t^T \int_{\bb{T}^d} \left[m H^*\left(x,-\frac{w}{m}\right) + m \alpha\right]\dd x\dd t.
	\end{equation}
	
	To conclude, we just need to show  that 
	\begin{equation}
	\int_{\bb{T}^d} u(T) m_{\e}(T)\dd x \to \int_{\bb{T}^d} u(T) m_T\dd x, ~~ \e \da 0.
	\end{equation}
	Since $u(T) \in L^\infty(\bb{T}^d)$, it is enough to show $m_{\e}(T) \to m_T$ in $L^1(\bb{T}^d)$ as $\e\da 0$.
	For this we choose a particular construction of the convolution kernel $\xi_\e$.
	
	Let $\eta : \bb{R} \to (0,\infty)$ and $\psi : \bb{R}^d \to (0,\infty)$ be even convolution kernels, each with compact support in the unit ball, $\d>0$ and set $\eta_\delta(t) = \delta^{-1}\eta(t/\delta)$ and $\psi_\e(x) = \e^{-d}\psi(x/\e)$. We will choose $\xi_{\e}(t,x) = \eta_\delta(t)\psi_\e(x)$ where $\delta = \delta(\e)$ will be determined by the following calculations.
	Set $m_{T,\e} = \xi_\e \ast m_T$.
	Our first observation is that
	\begin{multline} \label{eq:m ep to m}
	\int_{\bb{T}^d}|m_\e(T,x) - m_T(x)|\dd x
	\leq \int_{\bb{T}^d}\left|\int_{T-\delta}^{T+\delta} \int_{\bb{T}^d}\eta_\delta(T-s)\psi_\e(x-y)(m(s,y)-m_T(y)) \dd y \dd s\right|\dd x\\
	+ \int_{\bb{T}^d}\left|\int_{T-\delta}^{T+\delta}\int_{\bb{T}^d}\eta_\delta(T-s)\psi_\e(x-y)(m_T(y)-m_T(x)) \dd y \dd s\right|\dd x\\
	= \int_{\bb{T}^d}\left|\int_{T-\delta}^{T+\delta} \int_{\bb{T}^d}\int_s^T \eta_\delta(T-s)\nabla \psi_\e(x-y) \cdot w(\tau,y) \dd y \dd \tau \dd s  \right|\dd x
	+ \int_{\bb{T}^d}\left|m_{T,\e}(x)-m_T(x)\right|\dd x\\
	\leq \frac{C}{\e^{d+1}}\int_{T-\delta}^T \int_{\bb{T}^d} |w(\tau,y)|\dd \tau \dd y + \int_{\bb{T}^d}\left|m_{T,\e}(x)-m_T(x)\right|\dd x.
	\end{multline}
	We set $\delta = \delta(\e)$ small enough such that $\frac{C}{\e^{d+1}}\int_{T-\delta}^T \int_{\bb{T}^d} |w(\tau,y)|\dd \tau \dd y \leq \e$.
	Then \eqref{eq:m ep to m} proves that $m_\e(T,\cdot) \to m_T$ in $L^1$ as $\e \to 0$.
	The proof of \eqref{ineq:ineqfirst-plan} is complete.

\end{proof}
\begin{proof}[Proof of Proposition \ref{prop:weakequalsstrong-plan} ] 
%The second equality in \eqref{equality_relaxed_non_relaxed_problems} being a consequence of the first one and Lemma \ref{Lem:dualite-planning}, we only have to prove the first equality.
Fixing $t\in (0,T)$ such that \eqref{ineq:ineqfirst-plan} and \eqref{ineq:ineqfirst-plan_i} hold, by adding both inequalities we get that 
	\begin{equation}
	\label{ineq:ineqfirst-plan_ii}
	\int_{\T^d} (u(T) m_T -  u(0)m(0)) \dd x  + \int_0^{T}\int_{\T^d} m \left[\alpha  + H^*\left(x,-\frac{w}{m}\right)\right]\dd x \dd t \; \geq \; 0,
	\end{equation}
for every $(u,\alpha)\in \K$ and $(m,w) \in \K_1$ satisfying the assumptions of Lemma \ref{lem:phialphamw-plan}. Thus, 
$$\ds\ba{rcl}
\A(u,\alpha) &\geq& -\ds \int_0^{T}\int_{\T^d} \left[ m \left(\alpha  + H^*\left(x,-\frac{w}{m}\right)\right) - F^{\ast}(x,\alpha)\right]\dd x \dd t \\[6pt]
   \; & \geq  & \ds-\int_0^{T}\int_{\T^d} \left[H^*\left(x,-\frac{w}{m} \right)m  +F(x,m) \right] \dd x \dd t,\ea
$$
from which we deduce that $\inf_{(u,\alpha) \in \K} \A(u,\alpha)  \geq - \min_{(m,w) \in \K_1}\B(m,w) $.  Therefore, \eqref{equality_relaxed_non_relaxed_problems} follows from the inequalities
$$
- \min_{(m,w) \in \K_1}\B(m,w)= \inf_{ u \in \K_0} \A(u,\alpha) \geq \inf_{(u,\alpha) \in \K} \A(u,\alpha)   \geq - \min_{(m,w) \in \K_1}\B(m,w).
$$
\end{proof}
\subsection{Weak solutions and minimizers} \label{sec:weak solutions planning}

The definition of weak solutions for the planning problem is analogous to that of the mean field game system (see \cite{Car15,CarGra}).
\begin{definition}\label{def:weaksolplanning} Let $(u,m)\in BV((0,T)\times\T^d) \times L^q((0,T)\times\T^d)$. 
	We say that $(u,m)$ is a weak solution to \eqref{eq:planning_general} if 
	\begin{itemize}
		\item[(i)] the following integrability conditions hold:
		$$ \ba{l}
		\ds \nabla u\in L^r((0,T)\times\T^d;\R^d), \hspace{0.3cm} u \in L^\infty((0,T)\times\T^d),  \\[6pt]	
		\mbox{the traces } u(0,\cdot), \;  u(T,\cdot) \; \; \mbox{belong to } L^{\infty}(\T^d),\\[6pt]	  
		\ds mH^*(\cdot, D_\xi H(\cdot,\nabla u))\in L^1((0,T)\times\T^d), \hspace{0.3cm} m D_\xi H(\cdot,\nabla u))\in L^1((0,T)\times\T^d;\R^d).
	    \ea$$
		
		\item[(ii)] Equation \eqref{eq:planning_general}-{\rm(i)} holds in the following sense: inequality
		\begin{equation}
		\label{eq:distrib-plan}
		\ds \quad -\partial_t u+H(x,\nabla u)\leq  f(x,m) \quad {\rm in }\; (0,T)\times \T^d
		\end{equation}
		holds in the sense of distributions.  \vspace{0.2cm}
		
		\item[(iii)] Equation \eqref{eq:planning_general}-{\rm(ii)} holds:  
		\begin{equation}
		\label{eqcontdef-plan}
		\ds \quad \partial_t m-{\rm div}(m D_\xi H(x,\nabla u)))= 0\  {\rm in }\; (0,T)\times \T^d, \quad m(0)=m_0, \ m(T) = m_T
		\end{equation}
		in the weak sense \eqref{continuity_equation_prescribed_initial_and_final_conditions}; and, finally, \vspace{0.2cm}
		
		\item[(iv)] the following equality holds: 
		\begin{equation}
		\label{defcondsup-plan} \begin{array}{l}
		\ds \int_0^T\int_{\bb{T}^d} m(t,x)\left[f(x,m(t,x))+ H^*(x, D_\xi H(x,\nabla u)(t,x)) \right]\dd x\dd t\\
		\ds\qquad \qquad \qquad \qquad \qquad \qquad \qquad + \int_{\bb{T}^d}\left[ m_T(x)u(T,x)-m_0(x)u(0,x)\right]\dd x=0. 
		\end{array}
		\end{equation}
	\end{itemize}
\end{definition}

To prove the existence of weak solutions, we will use the fact that they are equivalent to minimizers of the two optimization problems presented in Section \ref{sec:2opti-planning}.
In Section \ref{section_existence_minimizers_planning_problem} below we will show the existence of minimizers for problem \eqref{PB:dual-relaxed-plan}, and, hence, the existence of solutions to \eqref{eq:planning_general}.

\begin{theorem}\label{theo:solution-equals-optimizer-planning}
	Let $(\bar m,\bar w)\in \mathcal K_1$ be a minimizer of \eqref{Pb:mw2-planning} and $(\bar u,\bar \alpha)\in \mathcal K$ be a minimizer of \eqref{PB:dual-relaxed-plan}. Then $(\bar u, \bar m)$ is a weak solution of the planning problem \eqref{eq:planning_general} and $\bar w= -\bar mD_\xi H(\cdot,\nabla \bar u)$, while $\bar\alpha= f(\cdot,\bar m)$ a.e.
	
	Conversely, any weak solution $(\bar u,\bar m)$ of \eqref{eq:planning_general} is such that the pair $(\bar m,-\bar mD_\xi H(\cdot,\nabla \bar u))$ is the minimizer of \eqref{Pb:mw2-planning} while $(\bar u, f(\cdot,\bar m))$ is a minimizer of \eqref{PB:dual-relaxed-plan}. 
\end{theorem}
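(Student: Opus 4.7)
The plan is to exploit the no-duality-gap identity \eqref{no_duality_gap} established in Proposition \ref{prop:weakequalsstrong-plan}, combined with Lemma \ref{lem:phialphamw-plan} and the Fenchel-Young inequality, so that a pair of minimizers must realize equality in both; the strict convexity of $F(x,\cdot)$ together with the equality case in Lemma \ref{lem:phialphamw-plan} will then pin down the pointwise identities $\bar\alpha=f(\cdot,\bar m)$ and $\bar w=-\bar m D_\xi H(\cdot,\nabla\bar u)$, from which the four items in Definition \ref{def:weaksolplanning} follow.

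For the forward direction, I would first note that Lemma \ref{Lem:dualite-planning} yields $\bar m\in L^q((0,T)\times\T^d)$ and that finiteness of $\mathcal B(\bar m,\bar w)$ gives $\bar m H^*(\cdot,-\bar w/\bar m)\in L^1$, so the hypotheses of Lemma \ref{lem:phialphamw-plan} are met. Applying \eqref{ineq:ineqfirst-plan_ii} and combining with the pointwise Fenchel-Young bound $F^*(x,\bar\alpha)+F(x,\bar m)\geq\bar\alpha\bar m$ yields
\begin{equation*}
\mathcal A(\bar u,\bar\alpha)+\mathcal B(\bar m,\bar w)\geq 0,
\end{equation*}
while Proposition \ref{prop:weakequalsstrong-plan} forces this sum to equal zero. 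Hence both inequalities must be equalities almost everywhere. Strict convexity of $F(x,\cdot)$ together with the normalization $f(x,0)=0$ gives $\bar\alpha=f(x,\bar m)$ a.e.\ on $\{\bar m>0\}$ and $\bar\alpha\leq 0=f(x,0)$ on $\{\bar m=0\}$; in either case $\bar\alpha\leq f(x,\bar m)$, so the distributional inequality $-\partial_t\bar u+H(x,\nabla\bar u)\leq\bar\alpha$ inherited from $(\bar u,\bar\alpha)\in\mathcal K$ upgrades to \eqref{eq:distrib-plan}. Simultaneously, the equality case of Lemma \ref{lem:phialphamw-plan} delivers $\bar w=-\bar m D_\xi H(\cdot,\nabla\bar u)$ a.e., and substituting these two identities into $\mathcal A+\mathcal B=0$ reproduces \eqref{defcondsup-plan} verbatim. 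Equation \eqref{eqcontdef-plan} is inherited from $(\bar m,\bar w)\in\mathcal K_1$, and the integrability requirements of Definition \ref{def:weaksolplanning}(i) follow from finite energy and the growth estimates in (H1), (H3).

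For the converse, given a weak solution $(\bar u,\bar m)$ I set $\bar w:=-\bar m D_\xi H(\cdot,\nabla\bar u)$ and $\bar\alpha:=f(\cdot,\bar m)$. Admissibility $(\bar m,\bar w)\in\mathcal K_1$ is immediate from \eqref{eqcontdef-plan}, while $(\bar u,\bar\alpha)\in\mathcal K$ follows from the integrability in Definition \ref{def:weaksolplanning}(i), the subsolution inequality \eqref{eq:distrib-plan}, and the bound $|\bar\alpha|\leq C(1+|\bar m|^{q-1})\in L^{q'}$ from (H3). Since $\bar\alpha=f(\cdot,\bar m)$ realizes equality in the Fenchel-Young inequality for $F$, condition \eqref{defcondsup-plan} rearranges verbatim to $\mathcal A(\bar u,\bar\alpha)+\mathcal B(\bar m,\bar w)=0$, and by \eqref{no_duality_gap} each pair must therefore be a minimizer of its problem. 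The main subtlety I anticipate lies on the set $\{\bar m=0\}$: equality in Fenchel-Young there does not force $\bar\alpha=f(\cdot,0)$ but only $\bar\alpha\leq 0$, which is precisely why working with the subsolution formulation \eqref{eq:distrib-plan} (rather than pointwise equality in the Hamilton-Jacobi equation off $\mathrm{spt}(\bar m)$) and invoking the normalization $f(x,0)=0$ are essential; a secondary technical point is confirming that the equality case of Lemma \ref{lem:phialphamw-plan} (stated at $t=0$) is indeed triggered by the equality in the combined form \eqref{ineq:ineqfirst-plan_ii}.
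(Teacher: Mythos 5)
Your argument is correct, and the forward direction coincides with the paper's: both start from $\mathcal A(\bar u,\bar\alpha)+\mathcal B(\bar m,\bar w)=0$ (Proposition \ref{prop:weakequalsstrong-plan}), use Fenchel--Young to reduce to the quantity controlled by \eqref{ineq:ineqfirst-plan_ii}, and then invoke the equality case of Lemma \ref{lem:phialphamw-plan} (note that \eqref{ineq:ineqfirst-plan_ii} with $m(0)=m_0$ \emph{is} \eqref{ineq:ineqfirst-plan} at $t=0$, so the lemma's equality clause applies directly, as you suspected). Your caution about $\{\bar m=0\}$, where Fenchel--Young equality only yields $\bar\alpha\le 0=f(x,0)$, is in fact more careful than the paper, which asserts $\bar\alpha=f(x,\bar m)$ a.e.\ without comment; as you observe, the inequality $\bar\alpha\le f(\cdot,\bar m)$ suffices for \eqref{eq:distrib-plan} and for \eqref{defcondsup-plan}. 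Where you genuinely diverge is the converse: the paper proves minimality by a direct first-variation argument, expanding $\mathcal A(\bar u',\bar\alpha')$ (resp.\ $\mathcal B(\bar m',\bar w')$) around $(\bar u,\bar\alpha)$ via convexity of $F^*$ (resp.\ $F$) and then applying Lemma \ref{lem:phialphamw-plan} to the \emph{crossed} pairs $(\bar u',\bar\alpha')$--$(\bar m,\bar w)$ and $(\bar u,\bar\alpha)$--$(\bar m',\bar w')$. You instead compute $\mathcal A(\bar u,\bar\alpha)+\mathcal B(\bar m,\bar w)=0$ directly from \eqref{defcondsup-plan} and Fenchel--Young equality, and then squeeze $\mathcal A(\bar u,\bar\alpha)\ge\inf\mathcal A=-\min\mathcal B\ge-\mathcal B(\bar m,\bar w)=\mathcal A(\bar u,\bar\alpha)$ using \eqref{equality_relaxed_non_relaxed_problems}. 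Your route is shorter and avoids a second appeal to Lemma \ref{lem:phialphamw-plan} altogether; its only cost is that it leans entirely on the no-duality-gap identity, whereas the paper's crossed-competitor computation is self-contained and exhibits the variational inequality explicitly. Both are valid.
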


\begin{proof}
	Let $(\bar m,\bar w) \in \s{K}_1$ be a minimizer of Problem \eqref{Pb:mw2-planning} and $(\bar u,\bar \alpha,) \in \s{K}$ be a minimizer of Problem  \eqref{PB:dual-relaxed-plan}.
	Due to  Proposition \ref{prop:weakequalsstrong-plan}, we have
	\begin{equation*}
	\int_0^T \int_{\bb{T}^d} \left[F^*(x,\bar \alpha) + F(x,\bar m) + \bar mH^*\left(x,-\frac{\bar w}{\bar m}\right)\right] \dd x\dd t + \int_{\bb{T}^d}\left[ \bar u(T)m_T - \bar u(0)m_0 \right]\dd x = 0.
	\end{equation*}
	We show that $\bar \alpha = f(x,\bar m)$. Indeed, by the definition of Legendre transform,
	\begin{equation} \label{eq:F_Fstar_geq plan}
	F^*(x,\bar \alpha(t,x)) + F(x,\bar m(t,x)) \geq \bar \alpha(t,x)\bar m(t,x), 
	\end{equation}
	hence
	$$
	\int_0^T \int_{\bb{T}^d}  \left[\bar \alpha(t,x)\bar m(t,x)+ \bar mH^*\left(x,-\frac{\bar w}{\bar m}\right)\right] \dd x \dd t + \int_{\bb{T}^d} \left[\bar u(T) m_T -\bar  u(0)m_0\right] \dd x \leq 0.
	$$
	Thanks to Lemma \ref{lem:phialphamw-plan}, the above inequality is in fact  an equality,  $\bar w=-\bar mD_\xi H(\cdot,\nabla \bar u)$ a.e.~and Equation \eqref{eq:F_Fstar_geq plan} becomes equality a.e. Therefore, by the convexity and differentiability of $F$,
	\begin{equation} \label{eq:alpha_equals_f plan}
	\bar \alpha(t,x) = f(x,\bar m(t,x))
	\end{equation}
	almost everywhere and \eqref{defcondsup-plan} holds for $(\bar u, \bar m)$.
	In particular, $\bar mH^*(\cdot, D_\xi H(\cdot,\nabla \bar u))\in L^1((0,T)\times\T^d)$.	
	Moreover, since $(\bar u,\bar \alpha) \in \s{K}$ and Equation \eqref{eq:alpha_equals_f plan} holds, we have $-\partial_t\bar  u + H(x,\nabla \bar u) \leq f(x,\bar m)$ in the sense of distributions.
	Furthermore, since $(\bar u,\bar \alpha) \in \s{K}$  and $\bar w=-\bar mD_\xi H(\cdot,\nabla \bar u)$, we have that $\bar mD_\xi H(\cdot,\nabla \bar u)\in L^1((0,T)\times\T^d;\R^d)$ and \eqref{eqcontdef-plan} holds in the sense of distributions.
	Therefore $(\bar u, \bar m)$ is a solution in the sense of Definition \ref{def:weaksolplanning}.
	
	Suppose now that $(\bar u,\bar m)$ is a weak solution of \eqref{eq:planning_general} as in Definition \ref{def:weaksolplanning}. Set $\bar w = -\bar mD_\xi  H(\cdot,\nabla \bar u)$, $\bar \alpha(t,x) = f(x,\bar m(t,x))$. By definition of weak solution $(\bar w,\bar \alpha) \in L^1((0,T)\times\T^d;\R^d)\times L^1((0,T)\times\T^d)$, $\bar m\in L^q((0,T)\times\T^d)$, and $\bar u\in L^\infty((0,T)\times\T^d)$.  Moreover, since  $\bar m\in L^q((0,T)\times\T^d)$, the growth condition \eqref{eq:coupling_growth_f'} implies that $\bar \alpha \in L^{q'}((0,T)\times\T^d)$.
	Therefore $(\bar m,\bar w) \in \s{K}_1$ and $(\bar u,\bar \alpha) \in \s{K}$.
	
	It remains to show that $(\bar u,\bar \alpha)$ minimizes $\s{A}$ and $(\bar m,\bar w)$ minimizes $\s{B}$.	
	Let $(\bar u',\bar \alpha') \in \s{K}$. By the convexity and differentiability of $F^*$ in the second variable, we have
	\begin{align*}
	\s{A}(\bar u',\bar \alpha') &= \int_0^T \int_{\bb{T}^d} F^*(x,\bar \alpha'(t,x))\dd x \dd t + \int_{\bb{T}^d}\left[ \bar u'(T,x)m_T(x) - \bar u'(0,x)m_0(x)\right]\dd x\\
	&\geq \int_0^T \int_{\bb{T}^d} \left[F^*(x,\bar \alpha(t,x)) + \partial_\alpha F^*(x,\bar \alpha(t,x))(\bar \alpha'(t,x) - \bar \alpha(t,x)) \right]\dd x \dd t \\
	& \quad \quad \quad + \int_{\bb{T}^d} \left[ \bar u'(T,x)m_T(x) - \bar u'(0,x)m_0(x)\right]\dd x\\
	&= \int_0^T \int_{\bb{T}^d} \left[F^*(x,\bar \alpha(t,x)) +\bar  m(t,x)(\bar \alpha'(t,x) - \bar \alpha(t,x))\right] \dd x \dd t \\
	& \quad \quad \quad + \int_{\bb{T}^d} \left[\bar u'(T,x)m_T(x) - \bar u'(0,x)m_0(x)\right]\dd x\\
	&= \s{A}(\bar u,\bar \alpha) + \int_0^T \int_{\bb{T}^d}  \bar m(t,x)(\bar \alpha'(t,x) -\bar  \alpha(t,x)) \dd x \dd t\\
	&\quad \quad \quad  + \int_{\bb{T}^d} (\bar u'(T,x)-\bar u(T,x))m_T(x)\dd x + \int_{\bb{T}^d}(\bar u(0,x)-\bar  u'(0,x))m_0(x)\dd x\\
	&= \s{A}(\bar u,\bar \alpha) + \int_0^T \int_{\bb{T}^d} \left[ \bar m(t,x)\bar \alpha'(t,x) + \bar m(t,x)H^*\left(x,-\frac{\bar w(t,x)}{\bar m(t,x)}\right)\right]\dd x \dd t\\
	&\quad \quad \quad  + \int_{\bb{T}^d} \bar u'(T,x)m_T(x)\dd x - \int_{\bb{T}^d}\bar  u'(0,x)m_0(x)\dd x
	\end{align*}
	where the last equality follows from Equation \eqref{defcondsup-plan}.
	%where $\alpha' m$ and $\alpha m$ both belong to $L^1$ thanks to Lemma \ref{lem:phialphamw}. 
	Applying Lemma \ref{lem:phialphamw-plan} applied to $(\bar u',\bar \alpha')$  and $(\bar m,\bar w)$, 
	we deduce
	$$
	\s{A}(\bar u',\bar \alpha') \geq \s{A}(\bar u,\bar \alpha),
	$$
	and so $(\bar u,\bar \alpha)$ is a minimizer of $\s{A}$. 
	
	The argument for $(\bar m,\bar w)$ is similar. Let $(\bar m',\bar w')$ be a competitor for $\s{B}$. Then because $F$ is convex and differentiable in the second variable, we have, using Equation \eqref{defcondsup-plan},
	\begin{align*}
	\s{B}(\bar m',\bar w') &= \int_0^T\int_{\T^d} \left[\bar m'H^*\left(x,-\frac{\bar w'}{\bar m'}\right) + F(x,\bar m')\right]\dd x\dd t\\
	&\geq \int_0^T\int_{\T^d} \left[ \bar m'H^*\left(x,-\frac{\bar w'}{\bar m'}\right) + F(x,\bar m) + f(x,\bar m)(\bar m'-\bar m)\right]\dd x\dd t\\
	&= \int_{\bb{T}^d} \left[\bar u(T) m_T-\bar u(0)m_0\right]\dd x 
	\\
	& \quad \quad \quad + \int_0^T\int_{\T^d} \left[\bar m'H^*\left(x,-\frac{\bar w'}{\bar m'}\right) + \bar mH^*\left(x,-\frac{\bar w}{\bar m}\right) + F(x,\bar m) + \bar \alpha\bar m'\right]\dd x\dd t\\
	&=\s{B}(\bar m,\bar w)+ \int_{\bb{T}^d} \left[\bar u(T) m_T-\bar u(0)m_0\right]\dd x + \int_0^T\int_{\T^d}\left[ \bar m'H^*\left(x,-\frac{\bar w'}{\bar m'}\right)  + \bar \alpha \bar m'\right]\dd x\dd t\\
	&\geq \s{B}(\bar m,\bar w).
	\end{align*}
	Here  we applied  Lemma \ref{lem:phialphamw-plan} to $(\bar u,\bar \alpha)$  and $(\bar m',\bar w')$ in the last line. Therefore $(\bar m,\bar w)$ is a minimizer of $\s{B}$. 
	
\end{proof}

\subsection{Existence of solutions of \eqref{PB:dual-relaxed-plan}}\label{section_existence_minimizers_planning_problem} We will need the following preliminary result proved in \cite[Lemma 2.7]{CarGra}.
\begin{lemma}\label{one_holder_bound} Let $\alpha$ be a continuous function, and set
	$$
	\nu= \frac{r-d(q-1)}{d(q-1)(r-1)+rq},
	$$
	which by Hypothesis (H3) is positive.
	Then there exists $C>0$ such that for any smooth subsolution of $-\partial_t u + H(x,\nabla u) \leq \alpha$,
	$$
	u(t_1,x) \leq u(t_2,y) + C\left[|x-y|^{r'}(t_2-t_1)^{1-r'} +\left((t_2-t_1)^{\nu}\wedge 1 +T^{1/q}\right) \left(\|\alpha_{+}\|_{q'}+1\right)\right]
	$$
	for all $0\leq t_1 <t_2\leq T$ and $x$, $y\in \TT^{d}$. 
\end{lemma}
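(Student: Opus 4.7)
The plan is to combine the Hamilton-Jacobi representation formula for smooth subsolutions with a careful averaging procedure over a two-parameter family of admissible curves joining $(t_1,x)$ to $(t_2,y)$. First I would establish that, for any smooth $u$ satisfying $-\partial_t u + H(x,\nabla u) \leq \alpha$ and any Lipschitz curve $\gamma : [t_1, t_2] \to \T^d$, computing $\tfrac{d}{ds} u(s,\gamma(s))$ and applying the Fenchel-Young inequality $-\nabla u \cdot \dot\gamma \leq H(x,\nabla u) + H^*(x,-\dot\gamma)$ yields
$$u(t_1,\gamma(t_1)) \leq u(t_2,\gamma(t_2)) + \int_{t_1}^{t_2}\left[H^*(\gamma(s),-\dot\gamma(s)) + \alpha_+(s,\gamma(s))\right] ds,$$
where $\alpha$ can be replaced by $\alpha_+$ up to an additive $C(t_2-t_1)$ coming from the lower bound on $H^*$.

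Next, I would introduce, for $a \in B_\rho(0) \subset \R^d$ with $\rho > 0$ to be chosen, the perturbed trajectories
$$\gamma_a(s) = x + \frac{s-t_1}{t_2-t_1}(y-x) + \phi(s)\, a,\qquad \phi(s) = \left[\frac{(s-t_1)(t_2-s)}{(t_2-t_1)^2}\right]^{\beta},$$
with a profile exponent $\beta$ to be fixed in the interval $(1/r,\, 1/(d(q-1)))$. The hard part of the argument is that this interval must be nonempty: this is precisely guaranteed by Hypothesis (H3), i.e. $r > d(q-1)$. Since $\phi$ vanishes at the endpoints, each $\gamma_a$ still joins $x$ to $y$, so averaging the representation formula over $a \in B_\rho$ gives
$$u(t_1,x) \leq u(t_2,y) + \frac{1}{|B_\rho|}\int_{B_\rho(0)}\int_{t_1}^{t_2}\left[H^*(\gamma_a,-\dot\gamma_a) + \alpha_+(s,\gamma_a(s))\right]ds\,da.$$

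I would then estimate the two contributions separately. Using $|\dot\gamma_a|^{r'} \leq C|y-x|^{r'}/(t_2-t_1)^{r'} + C|a|^{r'}|\dot\phi(s)|^{r'}$ together with $\int_{t_1}^{t_2}|\dot\phi|^{r'}ds \leq C(t_2-t_1)^{1-r'}$, valid since $\beta > 1/r$, the kinetic term is controlled by $C(|y-x|^{r'} + \rho^{r'})(t_2-t_1)^{1-r'} + C(t_2-t_1)$. For the coupling term, at each fixed $s$ the map $a \mapsto \gamma_a(s)$ has Jacobian $\phi(s)^d$ and image a ball of radius $\rho\phi(s)$; H\"older's inequality in the spatial integral gives
$$\int_{B_\rho(0)}\alpha_+(s,\gamma_a(s))\,da \leq \frac{C\rho^{d/q}}{\phi(s)^{d/q'}}\|\alpha_+(s,\cdot)\|_{L^{q'}(\T^d)}.$$
Integrating in $s$ and then applying H\"older in time with exponents $(q,q')$ yields a bound of the form $(C/\rho^{d/q'})(t_2-t_1)^{1/q}\|\alpha_+\|_{L^{q'}}$, whose validity relies on the integral $\int\phi(s)^{-dq/q'}ds$ being finite; a direct computation shows this equals $C(t_2-t_1)$ precisely when $\beta d(q-1) < 1$, i.e. $\beta < 1/(d(q-1))$.

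Finally, I would optimize in $\rho$. For $t_2 - t_1 \leq 1$ I would equate $\rho^{r'}(t_2-t_1)^{1-r'}$ and $\rho^{-d/q'}(t_2-t_1)^{1/q}\|\alpha_+\|_{q'}$; a short algebraic manipulation, which uses $r' = r/(r-1)$ and $q' = q/(q-1)$, shows that the common value takes the form $C(t_2-t_1)^{\nu}(\|\alpha_+\|_{q'}+1)$ with exactly $\nu = (r-d(q-1))/(rq + d(r-1)(q-1))$. For $t_2-t_1 \geq 1$ I would take $\rho = 1$, producing the contribution $T^{1/q}(\|\alpha_+\|_{q'}+1)$ that accounts for the last term in the statement.
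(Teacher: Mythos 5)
Your argument is correct and is essentially the proof that the paper invokes by citation to \cite[Lemma 2.7]{CarGra}: integrate the subsolution inequality along curves via Fenchel--Young, average over the perturbation parameter $a\in B_\rho$ with a profile $\phi$ vanishing at the endpoints, use the Jacobian $\phi(s)^d$ and H\"older to control the $\alpha_+$ term, and balance in $\rho$ --- with the hypothesis $r>d(q-1)$ entering exactly where you place it, in the nonemptiness of the admissible range for $\beta$, and the exponent algebra indeed producing $\nu$. The one step to tighten is the endgame: the cap $\rho\le 1$ must be imposed whenever the balancing radius exceeds $1$ (your H\"older bound $\int_{B_R}\alpha_+(s,\cdot)\le C R^{d/q}\|\alpha_+(s,\cdot)\|_{q'}$ for a periodic function is only valid for $R\le 1$), not merely when $t_2-t_1\ge 1$; treating that case with $\rho=1$ and the observation $\nu\le 1/q$ is what produces the $(t_2-t_1)^{\nu}\wedge 1$ and $T^{1/q}$ terms in the stated form.
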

As a consequence of the previous lemma, we have that, for any  $x\in\TT^d$,
$$
u(0,x) \leq u( T,y) + C\left[|x-y|^{r'}T^{1-r'} +\left(T^{\nu}\wedge 1 +T^{1/q}\right) \left(\|\alpha_{+}\|_{q'}+1\right)\right] \hspace{0.3cm} \forall \; y\in \TT^{d}, 
$$
and since $x$ and $y$ belong to a bounded set, up to redefining $C$, we get
\begin{equation}
\label{inequality_with_infimum}
u( 0,x) \leq  \inf_{y\in \TT^{d}} u(T,y) +	C\left[T^{1-r'}+\left(T^{\nu}\wedge 1 +T^{1/q}\right) \left(\|\alpha_{+}\|_{q'}+1\right)\right]  \hspace{0.3cm} \forall \; x\in \TT^{d}.
\end{equation}

\begin{proposition}\label{prop:existence_solution_for_relaxed}
	Problem \eqref{PB:dual-relaxed-plan} admits at least one solution $(u,\alpha)$.  The function $u$ is H\"older continuous in $[0,T)\times \TT^d$,  $\alpha \geq 0$ a.e. and there exists  $C>0$, independent of $m_0$ and $m_T$, such that 
\begin{equation}
\label{uniform_bound_w_r_t_m_0_m_T}
\sup_{(t,x)\in [0,T] \times \TT^d} |u(t,x)| +\|\nabla u\|_{L^r}+\|\partial_t u \|_{\M}+\|\alpha\|_{L^{q'}}\leq C,
\end{equation}
where $\| \cdot \|_{\M}$ denotes the usual norm of $\M$ as dual space of $\mathcal C^0([0,T] \times \TT^d)$.
%	such that {\color{blue}$u(T,\cdot) \in L^{\infty}(\TT^d)$}.
\end{proposition}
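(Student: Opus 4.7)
The plan is to apply the direct method of the calculus of variations to the convex problem \eqref{PB:dual-relaxed-plan}, using Lemma \ref{Lem:dualite-planning} and Lemma \ref{one_holder_bound} as the main a priori tools.

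First I take a minimizing sequence $(u_n, \alpha_n)_{n \geq 1} \subset \mathcal{K}$ with $\mathcal{A}(u_n, \alpha_n) \to \inf_{\mathcal{K}} \mathcal{A}$; this infimum is finite by Proposition \ref{prop:weakequalsstrong-plan} together with \eqref{additional_regularity_solution_opt_problem}. Two preliminary reductions are crucial. Since $\int_{\T^d} m_0 \dd x = \int_{\T^d} m_T \dd x = 1$, the functional $\mathcal{A}$ is invariant under adding a constant to $u$, so I may normalize $\mathrm{essinf}_{\T^d} \, u_n(T,\cdot) = 0$, giving $u_n(T,\cdot) \geq 0$ a.e. Since $F^{*}(x,\cdot)$ vanishes on $(-\infty,0]$ and the distributional subsolution inequality is preserved when $\alpha$ is increased, I may also replace $\alpha_n$ by $\alpha_n \vee 0$ and assume $\alpha_n \geq 0$ a.e.

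Next I derive the $L^{q'}$ bound on $\alpha_n$. Combining the coercive lower bound $F^{*}(x,a) \geq \tfrac{1}{q'C}|a|^{q'} - C$ from \eqref{eq:cost_growth_star} with \eqref{inequality_with_infimum} under the above normalization, one gets $u_n(0,x) \leq C(1 + \|\alpha_n\|_{L^{q'}})$ pointwise; hence $-\int_{\T^d} u_n(0) m_0 \dd x \geq -C(1 + \|\alpha_n\|_{L^{q'}})$. Using also $\int_{\T^d} u_n(T) m_T \dd x \geq 0$, this yields $\mathcal{A}(u_n, \alpha_n) \geq \tfrac{1}{q'C}\|\alpha_n\|_{L^{q'}}^{q'} - C\|\alpha_n\|_{L^{q'}} - C$, and since $\mathcal{A}(u_n,\alpha_n)$ is bounded above, $\|\alpha_n\|_{L^{q'}}$ is uniformly bounded by a constant independent of $m_0, m_T$. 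Plugging this back into Lemma \ref{one_holder_bound} and taking $y$ close to an essential minimizer of $u_n(T,\cdot)$ (using compactness of $\T^d$) gives an upper bound on $u_n(t,x)$ that is uniform on $[0,T-\delta] \times \T^d$ for every $\delta > 0$, together with a joint H\"older modulus in both variables obtained by optimizing $\tau := t_2 - t_1$. Bounds on $\|\nabla u_n\|_{L^r}$ and $\|\partial_t u_n\|_{\M}$ follow from the distributional subsolution inequality and the growth \eqref{eq:hamiltonian_bounds}.

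I would then extract, along a subsequence, $\alpha_n \rightharpoonup \alpha$ in $L^{q'}$, $\nabla u_n \rightharpoonup \nabla u$ in $L^r$, $\partial_t u_n \overset{\ast}{\rightharpoonup} \partial_t u$ in $\M$, and $u_n \to u$ uniformly on compact subsets of $[0,T) \times \T^d$ via Arzel\`a--Ascoli. The distributional inequality $-\partial_t u + H(x,\nabla u) \leq \alpha$ passes to the limit by Ioffe-type lower semi-continuity for the convex integrand $H$ applied to $\nabla u_n$. Lower semi-continuity of $\mathcal{A}$ follows from convexity of $F^{*}$ in its second variable (weak $L^{q'}$ lower semi-continuity of $\int F^{*}$) and convergence of the boundary traces, which comes from BV trace theory combined with uniform convergence on $[0,\delta] \times \T^d$ near $t = 0$. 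Therefore $(u,\alpha)$ is a minimizer, $\alpha \geq 0$ a.e.\ by passage to the limit, and \eqref{uniform_bound_w_r_t_m_0_m_T} follows from the a priori bounds being independent of $m_0, m_T$. The main obstacle I anticipate is the pointwise $L^\infty$ bound on $u_n$ up to $t = T$: Lemma \ref{one_holder_bound} yields only the one-sided estimate $u_n(t_1,x) \leq u_n(t_2,y) + C[\cdots]$, which produces the upper bound cleanly but forces separate work for the pointwise lower bound and for the bound as $t \uparrow T$. I expect the lower bound to follow by propagating the distributional inequality $\partial_t u_n \geq -\alpha_n - C$ (a consequence of (H1)) forward in time, combined with the one-sided spatial Lipschitz estimate from the same Lemma; the pointwise bound at $t = T$ can be handled either by exploiting the first-order optimality condition that forces equality in the subsolution inequality on $\mathrm{supp}(m)$ at the minimum, or by a careful truncation/regularization of the minimizing sequence.
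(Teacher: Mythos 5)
Your outline follows the right general strategy (direct method plus the a priori bounds from Lemma \ref{one_holder_bound}), and the $L^{q'}$ bound on $\alpha_n$ is obtained exactly as in the paper. But there is a genuine gap at precisely the point you flag and then leave unresolved: the uniform $L^\infty$ bound on the minimizing sequence $u_n$. Lemma \ref{one_holder_bound} with the normalization $\inf u_n(T,\cdot)=0$ gives an upper bound for $u_n(t,x)$ that degenerates as $t\uparrow T$ (the term $|x-y|^{r'}(t_2-t_1)^{1-r'}$ blows up since $1-r'<0$, and one cannot choose $y$ both close to $x$ and close to the argmin of $u_n(T,\cdot)$), and it gives no lower bound at all. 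Your proposed remedies do not close this: propagating $\partial_t u_n \ge -\alpha_n - C$ forward only controls increments by $\int \alpha_n$, which is not pointwise bounded ($\alpha_n$ is merely $L^{q'}$) and in any case requires a starting lower bound that you do not have; and invoking the first-order optimality condition forcing equality on $\mathrm{spt}(m)$ is circular, since those conditions (Theorem \ref{theo:solution-equals-optimizer-planning}) are derived only after a minimizer is known to exist. The paper's resolution is the renormalization trick: replace $u_n$ by $\tilde u_n=\eta\circ u_n$ with $\eta$ a bounded truncation satisfying $0\le\eta'\le1$, $\eta(s)=s$ for $|s|\le C_1$. Hypothesis (H2), $H(x,a\xi)\le aH(x,\xi)$ for $a\in[0,1]$, is exactly what guarantees $-\partial_t\tilde u_n+H(x,\nabla\tilde u_n)\le\eta'(u_n)\,(\alpha_n)_+\le(\alpha_n)_+$, so that $(\tilde u_n,(\alpha_n)_+)$ is still an admissible (and still minimizing, since $\tilde u_n(0)\ge u_n(0)$ and $\tilde u_n(T)\le u_n(T)$) sequence which is now uniformly bounded. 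This is the entire reason (H2) appears among the hypotheses, and your proposal does not use it.

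The gap propagates: your claim that the bounds on $\|\nabla u_n\|_{L^r}$ and $\|\partial_t u_n\|_{\M}$ "follow from the subsolution inequality and the growth \eqref{eq:hamiltonian_bounds}" is not self-contained, because the paper obtains them by integrating $\partial_t\tilde u_n+(\alpha_n)_++C\ge\frac{1}{Cr}|\nabla\tilde u_n|^r\ge0$ over $[0,T]\times\T^d$ and using $\int_{\T^d}(\tilde u_n(T)-\tilde u_n(0))\dd x\le C$, which is available only after the $L^\infty$ truncation (the minimizing property controls $\int u_n(T)\,m_T-\int u_n(0)\,m_0$ against the measures, not against Lebesgue). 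Two smaller points: the paper takes a smooth minimizing sequence in $\K_0$ (legitimate by Proposition \ref{prop:weakequalsstrong-plan}, which equates the infima), which is needed both to apply Lemma \ref{one_holder_bound} as stated (for smooth subsolutions with continuous $\alpha$) and to justify the chain rule in the truncation step; applying that lemma directly to $BV$ elements of $\K$, as you do, requires justification. Finally, the identification of the weak-$\ast$ $L^\infty$ limit of $u_n(T,\cdot)$ with the $BV$ trace $u(T,\cdot)$ deserves the explicit argument via $\int_0^T\partial_t u_n$ given in the paper; uniform convergence is only available on compact subsets of $[0,T)\times\T^d$, so the behaviour at $t=T$ cannot be read off from Arzel\`a--Ascoli.
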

\begin{proof}
	Consider a smooth minimizing sequence $(u_n)_n$ for Problem \eqref{PB:dual2-planning}. %{\color{blue}By Remark \ref{remark_regularization_K_0},   we can assume that for each $n \in \NN$ we have that 	$u_n \in C^{\infty}((0,T) \times \TT^d)$.} {\color{red}(Why do we need this??)} 
	Using that $\A(u_n)= \A(u_n+c)$ for all $c\in \RR$, by subtracting $\min_{x \in \TT^d}u_{n}(T,x)$ we can suppose that  $\min_{x\in \TT^{d}} u_n(T,x)= 0$. For all $x\in\TT^d, t\in [0,T]$, let $$\alpha_n(t,x):=-\partial_t u_n(t,x) + H(x, \nabla u_n(t,x)).$$
	Then, inequality \eqref{inequality_with_infimum} applies to all $u_n$ giving 
	\begin{equation}
	\label{inequality_with_infimum_n}
	u_n(0,x) \leq  C\left[T^{1-r'}+\left(T^{\nu}\wedge 1 +T^{1/q}\right) \left(\|(\alpha_n)_{+}\|_{L^{q'}}+1\right)\right]  \hspace{0.3cm} \forall \; x\in \TT^{d}.
	\end{equation}
	Moreover,  Proposition \ref{prop:weakequalsstrong-plan} implies that $(u_n, \alpha_n)_n$ is a minimizing sequence for Problem \eqref{PB:dual-relaxed-plan}.  Hence, since Hypothesis (H2) and $F^*(x,a)=0 $ for all $a\leq0$, imply $\A(0)=0$, we have	\begin{align}
	0&\geq \int_0^T\int_{\TT^d} F^*(x,\alpha_n)\dd x \dd t+ \int_{\TT^d} u_n(T,x) \dd m_T(x)-\int_{\TT^d} u_n(0,x) \dd m_0(x) \label{alpha_n_bounded}\\
	&\geq \frac{1}{q'C} \|(\alpha_n)_+\|_{L^{q'}}^{q'} -C\left(T^{1-r'}+\left(T^{\nu}\wedge 1 +T^{1/q}\right) \left(\|(\alpha_n)_+\|_{L^{q'}}+1\right)\right), \nonumber
	\end{align}
	where we used the growth condition \eqref{eq:cost_growth_star} on $F^*$, inequality \eqref{inequality_with_infimum_n}, $u_n(T,\cdot)\geq 0$ and $\int_{\TT^d} m_0(x) \dd x=1$. We deduce that $((\alpha_n)_+)_n$ is a bounded sequence in $L^{q'}([0,T]\times\T^d)$ uniformly with respect to $m_0$ and $m_T$. 
	Moreover, there exists a constant $C_1>0$ such that $u_{n}(0,x) \leq C_1$ for all $x\in \TT^{d}$.
	
	In order to obtain uniform bounds on $(u_n)_n$, we need to modify the sequence. %\corrd{ATTENTION: no need for $(u_n)_+$ .}
%	Observe that, by Hypothesis (H2),
%	$$
%	-\partial_t (u_n)_+ + H(x,\nabla (u_n)_+) = -{\bf 1}_{\{u_n > 0\}}\partial_t u_n + H(x,{\bf 1}_{\{u_n > 0\}}\nabla u_n) \leq {\bf 1}_{\{u_n > 0\}}(\alpha_n)_+ \leq (\alpha_n)_+.
%	$$
%	Meanwhile, $(u_n)_+(0) \geq u_n(0)$ while $(u_n)_+(T) = u_n(T)$ by assumption.
%	Finally, since $F^{\ast}(x,a)=0$ for all $x \in \TT^d$ and $a \leq 0$, we have that $F^{\ast}(x,(\alpha_n)_{+}) =  F^{\ast}(x,\alpha_n)$.
%	These facts imply that $\A((u_n)_+, (\alpha_n)_{+}) \leq \A(u_n, \alpha_n)$.
%	Thus 
%	$((u_n)_+,(\alpha_n)_{+})_n$ is a minimizing sequence for Problem \eqref{PB:dual-relaxed-plan}.
%	
%	Now, l
 Let $\eta \in C^{1}(\RR)$ such that $0\leq \eta' \leq 1$, $|\eta|\leq 2C_1$ and  $\eta(s)=s$ if $|s| \leq C_1$, and set $\tilde{u}_n := \eta \circ u_{n}$.  Since $u_{n}$ is Lipschitz continuous, we have that $\tilde{u}_n$ is Lipschitz continuous and, therefore, 
	\begin{align*}
	-\partial_t \tilde{u}_n + H(x,\nabla \tilde{u}_n) &\leq\eta'(\tilde {u}_n)\left( -\partial_t u_{n} +    H(x,\nabla u_{n})\right)\\
	&\leq  \eta'(\tilde{u}_n) (\alpha_n)_{+} \\
	 &\leq   (\alpha_n)_{+},
	\end{align*}
	where we have used that  $0\leq \eta' \leq 1$ and assumption (H2).
	
	Thus, $(\tilde{u}_n, (\alpha_n)_{+})_n \in \K$, $\|\tilde{u}_{n}\|_{L^\infty} \leq 2C_1$, i.e. $(\tilde u_n)_n$ is uniformly bounded,  and $(\tilde{u}_n, (\alpha_n)_{+})_n $ is a minimizing sequence. In fact, we have that $\tilde{u}_n(0,\cdot) \geq u_n(0,\cdot)$ and $\tilde{u}_n(T,\cdot)\leq u_{n}(T,\cdot)$ because $\eta(a)\geq a$ for all $a< 0$, $\eta(0)=0$ and $\eta(a)\leq a$ for all $a> 0$ (recall $u_{n}(0,x) \leq C_1$ and  $u_{n}(T,x) \geq 0$ for all $x\in \TT^{d}$). Moreover we can prove that $(\partial_t \tilde u_n)_n$ is bounded in $L^1([0,T]\times\T^d)$ and  $(\nabla \tilde u_n)$ is bounded in $L^r([0,T]\times\T^d;\R^d)$ uniformly w.r.t. $m_0$ and $m_T$. Indeed, by the growth condition \eqref{eq:hamiltonian_bounds} on $H$, for a.e. $(t,x)$, we have
	$$
	\partial_t \tilde{u}_n (t,x) + (\alpha_n)_{+}(t,x)+C\geq \frac{1}{Cr}|\nabla \tilde{u}_n|^{r}\geq 0.
	$$
	Therefore, since $ |\partial_t \tilde{u}_n | - | (\alpha_n)_{+}+C |\leq |\partial_t \tilde{u}_n + (\alpha_n)_{+}+C |$, we have 
	\begin{align*}
	\int_0^T\int_{\TT^d} |\partial_t \tilde{u}_n | \dd x \dd t &\leq  \int_0^T\int_{\TT^d}  (\alpha_n)_{+} \dd x\dd t+  \int_0^T\int_{\TT^d}\left(\partial_t \tilde{u}_n (t,x) + (\alpha_n)_{+}+C\right) \dd x \dd t \\ 
	&\leq C +  \int_{\TT^d} \left(\tilde u_n (T,x)-\tilde u_n (0,x)\right)\dd x \\
	&\leq C,
	\end{align*}
	where we used the fact that $((\alpha_n)_{+})_n$ is bounded in $L^{q'}([0,T]\times\T^d)$, hence in $L^1([0,T]\times\T^d)$, and that $(\tilde u_n )_n$ is uniformly bounded. Moreover,   
	$$
	\int_0^T\int_{\TT^d} |\nabla \tilde{u}_n(t,x)|^r \dd x \dd t \leq Cr \int_0^T\int_{\TT^d} \left(\partial_t \tilde{u}_n (t,x) + (\alpha_n)_{+}(t,x)+C\right) \dd x \dd t\leq C.
	$$
	Summarizing all the estimates, we have  
	\begin{equation}
	\label{stime}
	\|\tilde{u}_n\|_{L^\infty}+\|\nabla \tilde{u}_n\|_{L^r}+\|\partial_t \tilde{u}_n \|_{L^1}+\|(\alpha_n)_{+}\|_{L^{q'}}\leq C,
	\end{equation}
	with $C>0$ independent of $m_0$ and $m_T$.
	From this estimate we immediately deduce that, up to some subsequence,  $(\nabla \tilde{u}_n)_n$ weakly converges in $L^r([0,T]\times\T^d;\R^d)$, $(\partial_t \tilde{u}_n)_n$ weakly-* converges to a measure and $((\alpha_n)_{+})_n$ weakly converges in $L^{q'}([0,T]\times\T^d)$.
	
	Thanks to \cite[Lemma 1]{Car15} (see also \cite[Theorem 1.3]{CarSil}), we have that $(\tilde u_n)_n$ is a sequence of locally uniformly H\"older continuous functions on $[0,T)\times\TT^d$. Therefore, by the Arzel\`a-Ascoli theorem, we have that $(\tilde u_n)_n$ uniformly converges to $u \in \mathcal C^0([0,T)\times \TT^{d})$ on any compact set of $[0,T)\times\TT^d$. From \eqref{stime} we get that $u\in BV((0,T)\times \TT^{d})$ and $(\partial_t u, \nabla u)$ is the weak-* limit of $(\partial_t \tilde{u}_n, \nabla \tilde{u}_n )_n$.
	
	Let $\alpha\in L^{q'}([0,T]\times\T^d)$ be a weak limit of  $((\alpha_n)_{+})_n$ in $L^{q'}([0,T]\times\T^d)$.   Note that $\alpha \geq 0$ a.e. and, since $q'>1$, $\alpha$ is also a weak-* limit of  $((\alpha_n)_{+})_n$ in $L^1([0,T]\times\T^d)$. As a consequence of the last assertion, the pair $(u,\alpha)$ satisfies \eqref{uniform_bound_w_r_t_m_0_m_T} for some $C>0$. Now, take $\varphi$ a nonnegative test function in $\mathcal C^\infty_c([0,T]\times\TT^d)$, then for all $n$, we have
	$$
	\int_0^T\int_{\TT^d}-\partial_t\tilde u _n (t,x) \varphi (t,x)\dd x \dd t+ \int_0^T\int_{\TT^d} \varphi(t,x)H(x,\nabla \tilde{u}_n) \dd x \dd t\leq  \int_0^T\int_{\TT^d}\varphi(t,x) (\alpha_n)_{+} (t,x) \dd x \dd t.
	$$ 
	The first integral on the left hand side converge by the weak* convergence of $(\partial_t \tilde u_n)_n$ and the integral on the right hand side converge due to the weak  convergence of $((\alpha_n)_{+})_n$ in $L^{q'}([0,T]\times\T^d)$, while thanks to the convexity of $H$ in the gradient variable, we have
	$$
	\int_0^T\int_{\TT^d} \varphi(t,x) H(x,\nabla u)  \dd x \dd t\leq \liminf_{n\to\infty} \int_0^T\int_{\TT^d} \varphi(t,x) H(x,\nabla \tilde{u}_n) \dd x \dd t.
	$$
	Therefore,  $(u,\alpha)$ satisfies
	$$
	-\partial_t  u + H(x,\nabla u) \leq \alpha
	$$ 
	in the sense of distributions and in particular, $(u,\alpha)\in \K$.
	
	Let us now prove that $(u,\alpha)$ is a minimizer. Thanks to the convexity of $F^*$, we have the lower semicontinuity 
	\begin{equation}
	\label{convergence_F_ast_term}
	\int_0^T\int_{\TT^d} F^*(x,\alpha(t,x))  \dd x \dd t\leq \liminf_{n\to\infty} \int_0^T\int_{\TT^d}  F^*(x,(\alpha_n)_{+}(t,x)) \dd x \dd t.
	\end{equation}
	The uniform convergence of $(\tilde u_n)_n$ on any compact set of $[0,T)\times \TT^d$ implies that $(\tilde u_n(0,\cdot))_n$ converges uniformly to $u(0,\cdot)$, thus $\|u(0,\cdot)\|_{L^\infty} \leq C$ and 
\begin{equation}
\label{convergence_initial_value}
\int_{\TT^d} u(0,x) \dd m_0(x) = \lim _{n\to\infty}\int_{\TT^d}\tilde u_n(0,x) \dd m_0(x).
\end{equation}
The pointwise convergence of $(\tilde u_n(T,\cdot))_n$ is not ensured.  However, since $(\| \tilde u_n(T,\cdot)\|_{\infty})_n$ is uniformly bounded by $C$, there exists $g \in L^{\infty}(\TT^d)$ such that $\|g\|_{\infty} \leq C$ and, up to some subsequence, $u_n(T,\cdot)$ converges to $g$ in the weak-$^\ast$ topology $\sigma(L^{\infty}, L^{1})$. Therefore, 
\begin{equation}
\label{weak_convergence_l_infty}
\int_{\TT^d} \phi(x) g(x) \dd x = \lim_{n\to \infty} \int_{\TT^d} \phi(x) u_n(T,x) \dd x \hspace{0.4cm} \forall \; \phi \in L^{1}(\TT^d). 
\end{equation}
Now,  let $\phi \in \corrd{\mathcal C^0}(\TT^d)$. We have that
\begin{equation}
\label{computation_int_phi_u_n}
\begin{array}{rcl}
\ds\int_{\TT^d} \phi(x) u_n(T,x) \dd x&=&  \ds\int_{\TT^d} \int_{0}^{T} \phi(x) \partial_t  u_n(t,x)  \dd t \dd x +   \ds\int_{\TT^d} \phi(x) u_n(0,x) \dd x\\[5pt]
\; & \to &  \ds\int_{\TT^d} \int_{0}^{T} \phi(x) \partial_t  u(\dd t ,\dd x) + \int_{\TT^d} \phi(x) u(0,x) \dd x.
\end{array}
\end{equation}
Using that the trace $u(T,\cdot) \in L^{1}(\TT^d)$ of $u$ at $\{T\} \times \TT^d$ satisfies
$$
\int_{\TT^d} \int_{0}^{T} \phi(x) \partial_t  u(\dd t ,\dd x)= \int_{\TT^d}\phi(x) u(T,x) \dd x  - \int_{\TT^d} \phi(x) u(0,x) \dd x, 
$$
relations \eqref{weak_convergence_l_infty} and \eqref{computation_int_phi_u_n}  imply that $g=u(T,\cdot)$. Combining this result with \eqref{convergence_F_ast_term} and  \eqref{convergence_initial_value}, we deduce that $(u, \alpha)$ solves Problem \eqref{PB:dual-relaxed-plan}. The result follows. 
\end{proof}
\subsection{Stability result}
Now, for $\eps>0$ let us consider two probability densities $m^\eps_0$ and $m^\eps_T \in L^{1}(\TT^d)$ and denote by $(m_\eps, w_\eps)$ the unique solution to problem \eqref{Pb:mw2-planning} with $m_0$ and $m_T$ replaced by $m^\eps_0$ and $m^\eps_T \in L^{1}(\TT^d)$, respectively. Likewise, we denote by $(u_\eps, \alpha_\eps)\in \K$ a solution to the corresponding problem  \eqref{PB:dual-relaxed-plan} such that \eqref{uniform_bound_w_r_t_m_0_m_T} holds true.

The following stability result is a consequence of $\Gamma$-convergence and it follows easily from the statement and the proof of Proposition \ref{prop:existence_solution_for_relaxed}.
\begin{corollary} Suppose that, as $\eps \to 0^+$,   $(m^{\eps}_{0})_\eps$ and $(m^{\eps}_{T})_\eps $ converge  in $L^{1}(\TT^d)$ to $m_0$ and $m_T$, respectively. Then, the following assertions hold true: 
\begin{itemize}
\item[{\rm(i)}] $(m_\eps, w_\eps)$ converges weakly in $ L^q((0,T)\times \T^d)\times L^{\frac{r'q}{r'+q-1}}((0,T)\times \T^d;\R^d)$ to $(m, w)$, the unique solution to \eqref{Pb:mw2-planning}. \vspace{0.2cm}
\item[{\rm(ii)}] Up to some subsequence, $u_\eps\to u$ uniformly on every compact subset of $[0,T)\times \TT^d$, $u_\eps(T,\cdot) \to u(T,\cdot)$ weakly-$\star$ in $L^{\infty}(\TT^d)$,   and $\ds (\partial_t u_\eps, \nabla u_\eps,\alpha_\eps)\to (\partial_t u, \nabla u,\alpha)$ weakly-$\star$ in $\M((0,T)\times \TT^d)\times L^{r}((0,T)\times \TT^d;\R^d)\times L^{q'}((0,T)\times \TT^d)$, where $(u,\alpha)$ is a solution to \eqref{PB:dual-relaxed-plan} satisfying \eqref{PB:dual-relaxed-plan}.
\end{itemize}
\end{corollary}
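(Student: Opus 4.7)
The plan is to run the same $\Gamma$-convergence-type argument that underlies the proof of Proposition \ref{prop:existence_solution_for_relaxed}, with the crucial observation that every a priori bound there---namely \eqref{additional_regularity_solution_opt_problem} and \eqref{uniform_bound_w_r_t_m_0_m_T}---is independent of the boundary data $m_0, m_T$, and therefore applies uniformly in $\eps$.

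\textbf{Step 1 (compactness).} Extract, along a subsequence, weak limits $m_\eps \weakly m$ in $L^q$, $w_\eps \weakly w$ in $L^\ell$, $\nabla u_\eps \weakly \nabla u$ in $L^r$, $\partial_t u_\eps$ weakly-$\star$ in $\M$, and $\alpha_\eps \weakly \alpha$ in $L^{q'}$. Apply Lemma \ref{one_holder_bound} to $u_\eps$ (with $\alpha_\eps$ uniformly bounded in $L^{q'}$) together with Arzel\`a--Ascoli to obtain $u_\eps \to u$ locally uniformly on $[0,T) \times \T^d$; the uniform $L^\infty$ bound in \eqref{uniform_bound_w_r_t_m_0_m_T} then yields $u \in L^\infty$ and $u(0,\cdot) \in L^\infty(\T^d)$ as the uniform limit on $\{0\} \times \T^d$.

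\textbf{Step 2 (admissibility of the limits).} Test the continuity equation for $(m_\eps, w_\eps)$ against an arbitrary $\varphi \in C^1([0,T] \times \T^d)$: the left-hand side passes to the limit by weak $L^q \times L^\ell$ convergence of $(m_\eps, w_\eps)$, and the right-hand side passes by strong $L^1$ convergence of $m_0^\eps$ and $m_T^\eps$ (with $\varphi$ bounded). Hence $(m,w) \in \K_1$ with the correct boundary data $m_0, m_T$. For the Hamilton-Jacobi inequality, test $-\partial_t u_\eps + H(x, \nabla u_\eps) \leq \alpha_\eps$ against nonnegative $\varphi \in C_c^\infty((0,T) \times \T^d)$ and pass to the limit using weak-$\star$ convergence of $\partial_t u_\eps$, weak convergence of $\alpha_\eps$, and the Ioffe-type semicontinuity $\int \varphi H(x, \nabla u) \leq \liminf \int \varphi H(x, \nabla u_\eps)$ coming from convexity of $H(x, \cdot)$. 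The trace $u(T,\cdot) \in L^\infty(\T^d)$ is identified as the weak-$\star$ $L^\infty$ limit of $u_\eps(T,\cdot)$ by repeating verbatim the integration-by-parts argument with $\phi \in C^0(\T^d)$ that appears at the end of the proof of Proposition \ref{prop:existence_solution_for_relaxed} (see \eqref{weak_convergence_l_infty}--\eqref{computation_int_phi_u_n}). Thus $(u, \alpha) \in \K$.

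\textbf{Step 3 (optimality via duality).} By Lemma \ref{Lem:dualite-planning} applied at each $\eps$, the $\eps$-problem has zero duality gap, i.e.\ $\A_\eps(u_\eps, \alpha_\eps) + \B_\eps(m_\eps, w_\eps) = 0$. The interior integrals defining $\A$ and $\B$---namely $\int F^*(x, \alpha)$ and $\int[m H^*(x, -w/m) + F(x, m)]$---are weakly lower semicontinuous in the relevant spaces by convexity. For the boundary contributions of $\A_\eps$, decompose
\[
\int_{\T^d} u_\eps(T) m_T^\eps \dd x \;=\; \int_{\T^d} u_\eps(T) m_T \dd x \;+\; \int_{\T^d} u_\eps(T)\bigl(m_T^\eps - m_T\bigr)\dd x;
\]
the first term tends to $\int u(T) m_T$ by weak-$\star$ convergence of $u_\eps(T, \cdot)$ paired with $m_T \in L^1$, and the second is bounded by $\|u_\eps(T)\|_{L^\infty}\|m_T^\eps - m_T\|_{L^1} \to 0$ thanks to \eqref{uniform_bound_w_r_t_m_0_m_T}. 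The $t=0$ term converges similarly, using the locally uniform convergence $u_\eps(0,\cdot) \to u(0,\cdot)$. Therefore $\A_0(u, \alpha) \leq \liminf \A_\eps(u_\eps, \alpha_\eps)$ and $\B_0(m, w) \leq \liminf \B_\eps(m_\eps, w_\eps)$. Since $(u,\alpha) \in \K$ and $(m,w) \in \K_1$, Lemma \ref{lem:phialphamw-plan} gives the weak-duality bound $\A_0(u, \alpha) + \B_0(m, w) \geq 0$, and summing the liminf inequalities yields
\[
0 \;\leq\; \A_0(u, \alpha) + \B_0(m, w) \;\leq\; \liminf_{\eps \to 0}\bigl[\A_\eps(u_\eps, \alpha_\eps) + \B_\eps(m_\eps, w_\eps)\bigr] \;=\; 0,
\]
so both limit pairs are minimizers of their respective $\eps = 0$ problems. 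The uniqueness of $(m, w)$ from Lemma \ref{Lem:dualite-planning} then upgrades subsequential convergence to convergence of the full sequence.

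\textbf{Main obstacle.} The most delicate point is the passage to the limit in the boundary contribution at $t = T$. Unlike the interior integrands, it is not handled by convexity or weak lower semicontinuity, and it requires combining three separate pieces of information: strong $L^1$ convergence of $m_T^\eps$, the uniform $L^\infty$ bound on $u_\eps(T,\cdot)$, and the identification of the weak-$\star$ $L^\infty$ limit of $u_\eps(T, \cdot)$ as the trace $u(T, \cdot)$. Since Lemma \ref{one_holder_bound} degenerates as $t \uparrow T$, one has no pointwise information on $u_\eps(T, \cdot)$, so this identification must go through the distributional trace argument from the end of the proof of Proposition \ref{prop:existence_solution_for_relaxed}.
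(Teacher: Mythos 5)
Your proof is correct and rests on the same pillars as the paper's: the uniform-in-$\eps$ bounds \eqref{additional_regularity_solution_opt_problem} and \eqref{uniform_bound_w_r_t_m_0_m_T}, the compactness and trace-identification machinery from the proof of Proposition \ref{prop:existence_solution_for_relaxed}, and weak lower semicontinuity of the convex integrands. The one genuine difference is how optimality of the limit pairs is concluded. The paper first proves convergence of the optimal values, $\inf_{\K}\A_\eps \to \inf_{\K}\A$, by restricting (via \eqref{redefinition_function_A}) to competitors with $\|u(0,\cdot)\|_{L^\infty},\|u(T,\cdot)\|_{L^\infty}\le C$ and using the elementary estimate $|\A_\eps(u,\alpha)-\A(u,\alpha)|\le C\bigl(\|m_0^\eps-m_0\|_{L^1}+\|m_T^\eps-m_T\|_{L^1}\bigr)$ on that class; optimality of $(u,\alpha)$ and then of $(m,w)$ follows by testing lower semicontinuity against these converging values. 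You instead keep the two liminf inequalities for $\A$ and $\B$ separately, sum them against the per-$\eps$ identity $\A_\eps(u_\eps,\alpha_\eps)+\B(m_\eps,w_\eps)=0$ (Lemma \ref{Lem:dualite-planning} and Proposition \ref{prop:weakequalsstrong-plan} at level $\eps$), and close with the weak-duality inequality $\A(u,\alpha)+\B(m,w)\ge 0$ supplied by Lemma \ref{lem:phialphamw-plan}. This primal--dual sandwich never needs the quantitative continuity of $\A_\eps$ in the boundary data, while the paper's route yields convergence of the optimal values as an explicit intermediate statement. Two small points: the functional $\B$ itself does not depend on $\eps$ (only the constraint set $\K_1^\eps$ does), so the notation $\B_\eps$ is superfluous; and Lemma \ref{one_holder_bound} is stated for smooth subsolutions, so the equicontinuity of $(u_\eps)_\eps$ should strictly be quoted from the construction inside Proposition \ref{prop:existence_solution_for_relaxed} (the Hölder bound of \cite{Car15} applied to the smooth approximants, with constants controlled by $\|(\alpha_\eps)_+\|_{L^{q'}}$, hence uniform in $\eps$) rather than from the lemma directly --- the same shorthand the paper itself uses. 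Your identification of the $t=T$ boundary term as the delicate point, handled via the uniform $L^\infty$ bound together with the distributional identification of the weak-$\star$ limit of $u_\eps(T,\cdot)$ with the trace $u(T,\cdot)$, matches the paper's treatment.
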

\begin{proof} For $(u,\alpha) \in \K$ let us define
$$
\A_\eps(u,\alpha):=\int_0^T\int_{\T^d}  F^*(x,\alpha(t,x)) \dd x\dd t
+ \int_{\T^d} u(T,x)m_T^\eps(x) \dd x - \int_{\T^d} u(0,x)m_0^\eps(x) \dd x.
$$
Define also $\K_1^\eps$ as $\K_1$ with $m_0$ and $m_T$ replaced by  $m_0^\eps$ and $m_T^\eps$, respectively. 

Notice that Proposition \ref{prop:existence_solution_for_relaxed} implies that for all $\eps>0$ we have
\begin{equation}
\label{redefinition_function_A}
\inf_{(u,\alpha)\in \K} \A_\eps(u,\alpha)= \inf \left\{ \A_\eps(u,\alpha) \; | \; (u,\alpha) \in \K, \; \; \|u(0,\cdot)\|_{L^\infty} \leq C, \; \mbox{and } \|u(T,\cdot)\|_{L^\infty} \leq C  \right\}.
\end{equation}
Using that 
$$
|\A_\eps(u,\alpha)-\A(u,\alpha)| \leq C\left( \| m_0^\eps - m_0\|_{L^1} + \| m_T^\eps - m_T\|_{L^1}\right),
$$
for all $(u,\alpha) \in \K$ such that $\|u(0,\cdot)\|_{L^\infty} \leq C$ and $\|u(T,\cdot)\|_{L^\infty} \leq C$, relation \eqref{redefinition_function_A} implies that 
$$
\lim_{\eps \to 0^+}\ds -\min_{(m,w) \in \K_1^\eps} \B(m,w)=  \lim_{\eps \to 0^+} \inf_{(u,\alpha) \in \K} \A_\eps(u,\alpha)=  \inf_{(u,\alpha) \in \K} \A(u,\alpha)=-\min_{(m,w) \in \K_1} \B(m,w).
$$
Arguing as in the proof of Proposition \ref{prop:existence_solution_for_relaxed}, we have the existence of $(u,\alpha) \in \K$ such that, up to some subsequence, $(u_\eps,\alpha_\eps)_\e$ converges to $(u,\alpha)$ in the sense of {\rm(ii)}, and 
$$
\A(u,\alpha) \leq \lim_{\eps \to 0^+} \A_{\eps}(u,\alpha)= \inf_{(u,\alpha) \in \K} \A(u,\alpha),
$$ 
which implies {\rm(ii)}. In addition, Lemma \ref{Lem:dualite-planning} yields that $(m_\eps,w_\eps)$ is uniformly bounded in $L^{q}((0,T)\times \TT^d) \times L^{\ell}((0,T)\times \TT^d; \RR^d)$, where $\ell:=\frac{r'q}{r'+q-1}$. Then, the lower semicontinuity of the convex functional $\B$  implies that any weak limit point $(m,w)$ of $((m_\eps,w_\eps))_\eps$ satisfies 
$$\B(m,w) \leq \lim_{\eps \to 0} \B(m_\eps, w_\eps)= \min_{(m',w') \in \K_1} \B(m',w').$$
Since $(m_\eps, w_\eps)$ satisfies \eqref{continuity_equation_prescribed_initial_and_final_conditions} with initial and final conditions given by $m_0^\eps$ and $m_T^\eps$, respectively, we can pass to the limit in that equation to obtain that $(m,w)$ also satisfies \eqref{continuity_equation_prescribed_initial_and_final_conditions} with initial and final conditions given by $m_0$ and $m_T$, respectively. Finally, since $m_\eps \geq 0$ a.e. we also get that $m \geq 0$ a.e., which implies that $(m,w) \in \K_1$. Therefore, $(m,w)$ is the unique solution to \eqref{Pb:mw2-planning} and the whole sequence $(m_\eps,w_\eps)_\e$ converges to $(m,w)$ weakly in $L^q((0,T)\times \T^d)\times L^{\ell}((0,T)\times \T^d;\R^d)$. The result follows. 
\end{proof}

\subsection{Uniqueness} \label{sec:uniqueness planning}

In this subsection address uniqueness of solutions to the planning problem.
Let $(\bar u,\bar m)$ be a weak solution to \eqref{eq:planning_general}. In light of Theorem \ref{theo:solution-equals-optimizer-planning},  the pair $(\bar m, \bar w) = (\bar m,-\bar mD_\xi H(\cdot,\nabla \bar u))$ is the minimizer of \eqref{Pb:mw2-planning} while $(\bar u, f(\cdot,\bar m))$ is a solution of \eqref{PB:dual-relaxed-plan}. In particular,  $\bar m$ and $\bar w$ are unique because of the uniqueness of the solution of \eqref{Pb:mw2-planning}.

On the other hand, if $H$ is strictly convex in the second variable, then uniqueness of $\bar w$ implies that $\nabla \bar u$ is unique on the set $\{\bar m > 0\}$ (Cf.~the statement of Theorem 6.15 in \cite{OrrPorSav}).

\section{Sobolev regularity of weak solutions} \label{sec:regularity mfg}

In this section, by applying the techniques used in \cite{GraMes}, we prove some additional \emph{a priori} regularity for the weak solutions for  the planning problem \eqref{eq:planning_general}.
(The definition of weak solution is given  in Sections \ref{sec:weak solutions planning}.)
We need to assume the following hypotheses.

\vspace{10pt}

\noindent{\bf Additional assumptions}

\vspace{10pt}

\begin{enumerate}
	\item[(H\arabic{hyp})] (Conditions on the coupling)
	The function $f$ satisfies
	\begin{equation}
	\label{f Lipschitz in x}
	|f(x,m) - f(y,m)| \leq C(m^{q-1}+1)|x-y|\  \ \forall x,y \in \bb{T}^d, \ m \geq 0.
	\end{equation}
Moreover, there exists $c_f > 0$ such that
	\begin{equation} \label{f strongly monotone}
	\left( f(x,\tilde m) - f(x,m)\right)(\tilde m - m) \geq c_f\min\{\tilde m^{q-2},m^{q-2}\}|\tilde m-m|^2 \ \forall \tilde m, m \geq 0, \ \tilde m \neq m.
	\end{equation}
	If $q < 2$ one should interpret $0^{q-2}$ as $+\infty$ in \eqref{f strongly monotone}.
	In this way, when $\tilde m = 0$, for instance, \eqref{f strongly monotone} reduces to $f(x,m)m \geq c_f m^q$, as in the more regular case $q \geq 2$.
	
	\stepcounter{hyp}
	\item[(H\arabic{hyp})] (Coercivity assumptions.) There exist $j_1,j_2:\bb{R}^d \to \bb{R}^d$ and $c_H > 0$ such that
	\begin{equation} \label{eq:Hcoercivity}
	H(x,\xi) + H^*(x,\zeta) - \xi \cdot \zeta \geq c_H|j_1(\xi) - j_2(\zeta)|^2.
	\end{equation}
	In particular, and in light of our restriction \eqref{eq:hamiltonian_bounds}, we will have that $j_1(\xi) \sim |\xi|^{r/2-1}\xi$ and $j_2(\zeta) \sim |\zeta|^{r'/2-1}\zeta$.
	\stepcounter{hyp}
\end{enumerate}

\subsection{Global space regularity} \label{sec:space regularity}

By using arguments analogous to those in \cite[Proposition 4.3]{GraMes}, we get
\begin{proposition} \label{prop:space-regularity}
 Assume $(H1)$, $(H3)$, $(H5)$ and $(H6)$,  $m_0, m_T \in W^{2,1}(\bb{T}^d)$ and that $H^*$ is twice continuously differentiable in $x$ with	
	\begin{equation} \label{hyp:D_x^2 H}
	|D_{xx}^2 H^*(x,\zeta)| \leq C|\zeta|^{r'} + C. \tag{H\arabic{hyp}}
	\end{equation}
	\stepcounter{hyp}
Then, if $(u,m)$  is a weak solution of the planning problem \eqref{eq:planning_general}, we have 
$$\|m^{\frac{q}{2} - 1}\nabla m\|_{L^2([0,T]\times\T^d)} \leq C \hspace{0.3cm} \mbox{and } \; \; \|m^{1/2}D (j_1(\nabla u))\|_{L^2([0,T]\times\T^d)} \leq C.$$
\end{proposition}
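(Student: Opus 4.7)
The plan is to follow the shift-and-test strategy of \cite[Proposition 4.3]{GraMes}, adapted to the planning boundary conditions. Fix $h \in \T^d$ small and denote by $u^h$, $m^h$, $w^h$ the spatial translates $u(t,\cdot+h)$, $m(t,\cdot+h)$, $w(t,\cdot+h)$, where $w := -m D_\xi H(x,\nabla u)$. By Theorem \ref{theo:solution-equals-optimizer-planning}, the pair $(u,m)$ saturates the equality in Lemma \ref{lem:phialphamw-plan} for the original data $(m_0,m_T)$, while $(u^h,m^h)$ does likewise for the shifted planning system (with data $(m_0^h,m_T^h)$ and shifted Hamiltonian/coupling). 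The key step is to apply Lemma \ref{lem:phialphamw-plan} in the two \emph{mismatched} pairings: $(u,f(\cdot,m)) \in \K$ against $(m^h,w^h)$, and $u^h$ (viewed as a subsolution of $-\partial_t u^h + H(x,\nabla u^h) \leq \tilde\alpha$ with $\tilde\alpha := f(\cdot+h,m^h) + H(\cdot,\nabla u^h) - H(\cdot+h,\nabla u^h)$) against $(m,w)$.

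Subtracting the two saturation equalities from the two cross inequalities yields one master inequality. Its left-hand side contains two positive ``slack'' terms. The $f$-part reduces, after sign tracking, to $\int(m-m^h)\bigl[f(\cdot,m) - f(\cdot+h,m^h)\bigr]$, which via the splitting $f(x,m) - f(x+h,m^h) = [f(x,m) - f(x,m^h)] + [f(x,m^h) - f(x+h,m^h)]$ produces, by (H5), the positive contribution $c_f \int \min\{m^{q-2},(m^h)^{q-2}\}\,|m-m^h|^2$ up to an $|h|$-Lipschitz error absorbed by Young's inequality and the growth bound in (H3). The $H$/$H^*$-part, exploiting that $-w^h/m^h = D_\xi H(\cdot+h,\nabla u^h)$ and the identity $j_2(D_\xi H(x,\xi)) = j_1(\xi)$ forced by the equality case of Fenchel's inequality in (H6), produces via (H6) a positive contribution of order $c_H \int (m+m^h)\,|j_1(\nabla u) - j_1(\nabla u^h)|^2$, up to $x$-shift errors which by (H7) and the a priori $L^r$-bound on $\nabla u$ (together with the $L^{r'}$-bound on $w/m$) are of order $|h|^2$. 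The right-hand side is the boundary contribution: after a change of variables and symmetrizing the whole estimate under $h \mapsto -h$, it assembles into $\int_{\T^d} u(T)\,\Delta_h m_T \dd x - \int_{\T^d} u(0)\,\Delta_h m_0 \dd x$, where $\Delta_h \phi(x) := \phi(x+h)+\phi(x-h)-2\phi(x)$; the $L^\infty$-bound on $u$ from Proposition \ref{prop:existence_solution_for_relaxed} combined with the $W^{2,1}$-regularity of $m_0, m_T$ gives $\|\Delta_h m_\sigma\|_{L^1(\T^d)} = O(|h|^2)$ for $\sigma \in \{0,T\}$, so the boundary term is $O(|h|^2)$.

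Dividing the resulting inequality by $|h|^2$ and letting $h \to 0$ along each coordinate direction, weak lower semicontinuity of the symmetric second-order difference quotients would yield
\[
c_f \int_0^T\!\!\int_{\T^d} m^{q-2}|\nabla m|^{2} \dd x\,\dd t \;+\; c_H \int_0^T\!\!\int_{\T^d} m\,|D(j_1(\nabla u))|^{2} \dd x\,\dd t \;\leq\; C,
\]
from which both asserted Sobolev bounds follow. The main technical obstacle is the degeneracy of the weight $m^{q-2}$ on $\{m=0\}$ when $q<2$; following \cite{GraMes}, this is handled by first localizing to $\{m>\delta\}$, obtaining the estimate with a constant independent of $\delta$ through a careful truncation of the monotonicity inequality in (H5), and then letting $\delta\downarrow 0$. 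A secondary delicate point is the justification that the formal Fenchel-slack manipulation is valid at the level of weak solutions (which requires, as in the proof of Lemma \ref{lem:phialphamw-plan}, regularization of $(m,w)$ by convolution before passing to the limit).
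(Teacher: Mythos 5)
Your proposal is correct and follows essentially the same route as the paper's proof: spatial translations, the Fenchel slack extracted by cross-pairing translated subsolutions with the continuity equation and subtracting the optimality condition \eqref{defcondsup-plan}, (H5) for the $m$-part, (H6) for the $j_1(\nabla u)$-part, (H7) together with symmetrization in $h\mapsto -h$ to make the $x$-shift error second order, the $W^{2,1}$/$L^\infty$ bounds for the boundary terms, and division by $|h|^2$. The only organizational differences are that the paper implements the cross-pairings by testing with the smooth approximating subsolutions $(u_n,\alpha_n)$ from Proposition \ref{prop:existence_solution_for_relaxed} and passing to the limit $n\to\infty$ (rather than regularizing $(m,w)$ as you suggest), and that your extra truncation to $\{m>\delta\}$ is unnecessary, since (H5) with the convention $0^{q-2}=+\infty$ already makes the monotonicity inequality valid in the degenerate case $q<2$.
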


\begin{proof}
	We give only a sketch, leaving the reader to find the remaining details in \cite[Proposition 4.3]{GraMes}.
	From the proof of Proposition \ref{prop:existence_solution_for_relaxed}, there exists a sequence $(u_n,\alpha_n)$ such that $u_n \in C^1$, $\alpha_n$ is continuous, and
	$$
	-\partial_t u_n + H(x,\nabla u_n) \leq \alpha_n;
	$$ 
	moreover, $\alpha_n \rightharpoonup f(\cdot,m)$ weakly in $L^{q'}$, $\tilde u_n \to u$ locally uniformly in $[0,T) \times \bb{T}^d$, $\nabla \tilde u_n \rightharpoonup \nabla u$ weakly in $L^r$, and $\partial_t \tilde u_n \rightharpoonup \partial_t u$ weakly-$\star$ in the space of Radon measures.

	Fix $\delta \in \bb{T}^d$.
	For any function $f$ on $[0,T] \times \bb{T}^d$, $f^\delta(t,x) := f(t,x+\delta)$.
	Use $u^\delta_n$ as a test function in $\partial_t m + \nabla \cdot w = 0$ to get
	\begin{equation} \label{eq:phi_n^delta-test-m}
	\int_{\bb{T}^d} u_n^\delta(T) m_T - u_n^\delta(0)m_0 \geq \int_0^T \int_{\bb{T}^d} (H(x+\delta,\nabla u_n^\delta)-\alpha_n^\delta)m + \nabla u_n^\delta \cdot w \dd x\dd t.
	\end{equation}
	Combine this with the optimality condition \eqref{defcondsup-plan} to get
	\begin{multline} \label{eq:space-regularity1'}
	\int_{\bb{T}^d} (u_n^\delta(T) - u(T)) m_T - (u^\delta_n(0)-u(0))m_0 
	\\
	\geq
	\int_0^T \int_{\bb{T}^d} (H(x+\delta,\nabla u^\delta_n) + H^*(x,-w/m) + \nabla u^\delta_n \cdot w/m - \alpha_n^\delta + f(m))m  \dd x\dd t.
	\end{multline}
	Similarly, using $u_n$ as a test function in $\partial_t m^\delta + \nabla \cdot w^\delta = 0$ and combining with \eqref{defcondsup-plan},
	\begin{multline} \label{eq:space-regularity2'}
	\int_{\bb{T}^d} (u_n(T) - u^\delta(T)) m^\delta(T) - (u_n(0)-u_n^\delta(0))m^\delta_0 
	\\
	\geq
	\int_0^T \int_{\bb{T}^d} (H(x,\nabla u_n) + H^*(x+\delta,-w^\delta/m^\delta) + \nabla u_n \cdot w^\delta/m^\delta - \alpha_n + f^\delta(m^\delta))m^\delta  \dd x\dd t
	\end{multline}
	Combining \eqref{eq:space-regularity1'} and \eqref{eq:space-regularity2'}, after some changes of variables (translations) and a Taylor expansion of $H^*$, we deduce
	\begin{multline} \label{eq:space-regularity1}
	\int_0^T \int_{\bb{T}^d} (H(x+\d,\nabla u^\delta_n) + H^*(x+\d,-w/m) + \nabla u^\delta_n \cdot w/m)m  \dd x\dd t
	\\
	+ \int_0^T \int_{\bb{T}^d} (H(x-\d,\nabla u^{-\delta}_n) + H^*(x-\d,-w/m) + \nabla u_n^{-\delta} \cdot w/m)m  \dd x\dd t
	\\
	\leq \int_{\bb{T}^d} (u_n(T)(m_T^\delta + m_T^{-\delta}) - 2u(T)m_T)\dd x
	-  \int_{\bb{T}^d} (u_n(0)(m_0^\delta + m_0^{-\delta}) - 2u(0)m_0)\dd x
	\\
	+ \int_0^T \int_{\bb{T}^d} \left(\alpha_n^\delta + \alpha_n^{-\delta} - 2f(m)\right)m \dd x \dd t
	+ \int_0^T \int_{\bb{T}^d} \int_0^1\int_s^{-s}\langle D^2_{xx}H^*(x+\tau\d,-w/m)\d,\d\rangle m\dd\tau\dd s\dd x\dd t.
	\end{multline} 
	Equation \eqref{eq:space-regularity1} can be obtained by using $u_n^\delta$ as a test function in \eqref{eq:planning_general}(ii) and $u_n$ as a test function in the same equation with $m$ replaced by $m^\delta$, then using the optimality condition \eqref{defcondsup-plan}.

	Letting $n \to \infty$ in \eqref{eq:space-regularity1}, we get
	\begin{multline} \label{eq:space-regularity2}
	\int_0^T \int_{\bb{T}^d} (H(x+\d,\nabla u^\delta) + H^*(x+\d,-w/m) + \nabla u^\delta \cdot w/m)m  \dd x\dd t
	\\
	+ \int_0^T \int_{\bb{T}^d} (H(x-\d,\nabla u^{-\delta}) + H^*(x-\d,-w/m) + \nabla u^{-\delta} \cdot w/m)m  \dd x\dd t
	\\
	\leq \int_{\bb{T}^d}  u(T)(m_T^\delta + m_T^{-\delta} - 2 m_T)\dd x
		-  \int_{\bb{T}^d} u(0)(m_0^\delta + m_0^{-\delta} - 2m_0)\dd x
	\\
	+ \int_0^T \int_{\bb{T}^d} \left(f^\delta(m^\delta) + f^{-\delta}(m^{-\delta}) - 2f(m)\right)m \dd x \dd t
	+  C |\d|^2\int_0^T\int_{\T^d}(|w/m|^{r'}+1)m\dd x\dd t,
	\end{multline} 
	where we have used Hypothesis \eqref{hyp:D_x^2 H}.
	We have (see \cite[computation (4.25)]{GraMes})
	\begin{multline} \label{eq:g monotonicity estimates}
	\int_{\bb{T}^d} \left(f^\delta(m^\delta) + f^{-\delta}(m^{-\delta}) - 2f(m)\right){m} \dd x
	\\
	\leq
	C|\delta|^2 \left(1+\int_{\bb{T}^d} \min\{ m^\delta,m\}^{q} \dd x\right)
	- \frac{c_f}{2}\int_{\bb{T}^d} \min\{ (m^\delta)^{q-2}, m^{q-2}\}| m^\delta - m|^2 \dd x.
	\end{multline}
	 Combining this estimate with assumption \eqref{eq:Hcoercivity}, and using the inequality $|a+b|^2 \leq 2\left( a^2 +b^2\right)$ for all $a$, $b\in \RR$, we get
	\begin{multline} 
	\frac{c_H}{2}\int_0^T \int_{\bb{T}^d} \left(|j_1(\nabla u^\delta) - j_1(\nabla u^{-\delta})|^2\right) m \dd x \dd t
%	\\
	+ \frac{c_f}{2}\int_0^T \int_{\bb{T}^d} \min\{(m^\delta)^{q-2},m^{q-2}\}|m^\delta - m|^2 \dd x \dd t
	\\
	\leq    C |\d|^2\int_0^T\int_{\T^d}(|w/m|^{r'}+1)m\dd x\dd t +C |\d|^2\left(T+\int_0^T\int_{\bb{T}^d} \min\{m^\delta,m\}^{q} \dd x \dd t\right) \\
	+ C|\delta|^2 \|m_T\|_{W^{2,1}}\|u(T)\|_\infty+ C|\delta|^2 \|m_0\|_{W^{2,1}}\|u(0)\|_\infty,
	\end{multline}
	 where we have used the $L^\infty$ estimate on $u(0), u(T)$ from Proposition \ref{prop:existence_solution_for_relaxed}.
	Since also
	$$
	 \int_0^T\int_{\T^d}(|w/m|^{r'}+1)m\dd x\dd t +\lim_{\delta \to 0}\int_0^T\int_{\bb{T}^d} \min\{ m^\delta, m \}^{q} \dd x\dd t \leq C\s{B}(m,w) + C < \infty
	$$
	we conclude that there exists some $C$ such that
	\begin{multline} 
	\frac{c_H}{2}\int_0^T \int_{\bb{T}^d} \left(|j_1(\nabla u^\delta) - j_1(\nabla u^{-\delta})|^2\right) m \dd x \dd t
	%	\\
	+ \frac{c_f}{2}\int_0^T \int_{\bb{T}^d} \min\{(m^\delta)^{q-2},m^{q-2}\}|m^\delta - m|^2 \dd x \dd t
	\leq C |\d|^2.
	\end{multline}
	Dividing by $|\d|^2$ and letting $\d\to 0$, we easily obtain the result. 
\end{proof}

\subsection{Local time regularity} \label{sec:time regularity} We rely on very similar arguments as those found in the  previous section, but applied to time rather than space.
Our translations in time will be localized so as to avoid conflict with the initial-final conditions.

Let $\e\in\R$ be small and $\eta:[0,T]\to[0,1]$ be smooth and compactly supported on $(0,T)$ such that $|\e|<\min\left\{{\rm{dist}}(0,{\rm{spt}}(\eta));{\rm{dist}}(T,{\rm{spt}}(\eta))\right\}.$ For competitors $(u,\a)$ of the minimization problem for $\cA$, let us define
$$u^\e(t,x):=u(t+\e\eta(t),x);\ \ \a^\e(t,x):=(1+\e\eta'(t))\a(t+\e\eta(t),x).$$
Notice that by construction, if $t\in\{0,T\}$ then $u(t,x)=u^\e(t,x)$ and $\a(t,x)=\a^\e(t,x)$, provided that $\a(t,x)$ is well-defined. 

Similarly, for competitors $(m,w)$  of minimization problem for $\cB$, we define
$$m^\e(t,x):=m(t+\e\eta(t),x);\ \ w^\e(t,x):=(1+\e\eta'(t))w(t+\e\eta(t),x)$$
and here as well if $t\in\{0,T\}$ then $m(t,x)=m^\e(t,x)$ and $w(t,x)=w^\e(t,x).$

We define moreover perturbations on the data as 
$$f^\e(t,x,m):=(1+\e\eta'(t)) f(x,m);\ \ F^\e(t,x,m):= (1+\e\eta'(t)) F(x,m),$$
from which the Legendre transform w.r.t.~the last variable satisfies
$$(F^\e)^*(t,x,\a):= (1+\e\eta'(t)) F^*(x,\a/(1+\e\eta'(t))).$$
Finally, we define
$$H^\e(t,x,\xi):= (1+\e\eta'(t)) H(x,\xi), \ \ {\rm{thus}}\ (H^\e)^*(t,x,\zeta):= (1+\e\eta'(t)) H^*(x,\zeta/(1+\e\eta'(t))).$$
We define the functional $\cA^\e$ and its domain $\s{K}^\e$ the same way as $\cA$ and $\s{K}$ in Section \ref{sec:2opti-planning}, with the data $ H^\e, (F^\e)^*$ replacing $H, F^*$.
Likewise, we define the functional $\cB^\e$ and its domain $\s{K}_1^\e$ the same way as  $\cB$ and $\s{K}_1$ in Section \ref{sec:2opti-planning}
 with data $(H^\e)^*, F^\e$ replacing $H^*, F$.
A very important remark is that the following (cf.~\cite[Section 4.1]{GraMes}.):
\begin{lemma}
	$(u,\a) \in \s{K}$ is a minimizer of the problem for $\cA$ if and only if $(u^\e,\a^\e)$ is a minimizer of the problem for $\cA^\e$.
	Similarly, $(m,w)$ is a minimizer of the problem for $\cB$ if and only if $(m^\e,w^\e)$ is a minimizer of the problem for $\cB^\e$.
\end{lemma}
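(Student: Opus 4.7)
My plan is to verify directly, via a change of variables in time, that the map $(u,\alpha) \mapsto (u^\e, \alpha^\e)$ is a bijection between $\s{K}$ and $\s{K}^\e$ with $\cA^\e(u^\e,\alpha^\e) = \cA(u,\alpha)$, and analogously for the $\cB$-side. Once the two functionals are intertwined, the equivalence of minimizers is automatic. First I would observe that for $|\e|$ small enough, the map $\Phi_\e : t \mapsto t+\e\eta(t)$ is an orientation-preserving smooth diffeomorphism of $[0,T]$ fixing both endpoints (since $\eta(0)=\eta(T)=0$), with $1+\e\eta'(t) > 0$.

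The key algebraic identity for the $\cA$-side is the pointwise relation
\[
-\partial_t u^\e(t,x) + H^\e(t,x,\nabla u^\e(t,x)) - \alpha^\e(t,x) = \bigl(1+\e\eta'(t)\bigr)\Bigl[-\partial_t u + H(x,\nabla u) - \alpha\Bigr]\bigl(\Phi_\e(t),x\bigr),
\]
which, combined with the positivity of $1+\e\eta'$, shows that the subsolution constraint transports under $(u,\alpha) \mapsto (u^\e,\alpha^\e)$ and that the integrability/boundedness conditions defining $\s{K}$ are preserved (the boundary traces are unchanged because $\eta$ vanishes at the endpoints). For the value of the functional, the boundary terms $\int u(T) m_T - \int u(0)m_0$ are unchanged for the same reason, while for the running cost one uses the homogeneity of the Legendre transform
\[
(F^\e)^*(t,x,\alpha^\e) = (1+\e\eta'(t))\, F^*\!\bigl(x, \alpha(\Phi_\e(t),x)\bigr),
\]
and the substitution $s = \Phi_\e(t)$, $\dd s = (1+\e\eta'(t))\dd t$, to get
\[
\int_0^T\!\int_{\T^d}(F^\e)^*(t,x,\alpha^\e)\,\dd x\,\dd t = \int_0^T\!\int_{\T^d} F^*(x,\alpha(s,x))\,\dd x\,\dd s.
\]
Inverting $\Phi_\e$ gives the reverse correspondence, so the map is bijective onto $\s{K}^\e$ and $\cA^\e(u^\e,\alpha^\e)=\cA(u,\alpha)$.

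For the $\cB$-side the calculation is entirely parallel. The continuity equation transports as
\[
\partial_t m^\e + \nabla\cdot w^\e = (1+\e\eta'(t))\bigl[\partial_t m + \nabla\cdot w\bigr](\Phi_\e(t),x) = 0,
\]
and the prescribed initial and final densities are unchanged because $\Phi_\e$ fixes $0$ and $T$. Using the identity $w^\e/m^\e = (1+\e\eta'(t))(w/m)\circ \Phi_\e$ and the definition of $(H^\e)^*$, one gets
\[
m^\e (H^\e)^*(t,x,-w^\e/m^\e) + F^\e(t,x,m^\e) = (1+\e\eta'(t))\Bigl[m H^*(x,-w/m) + F(x,m)\Bigr]\!\bigl(\Phi_\e(t),x\bigr),
\]
so after the same substitution $s=\Phi_\e(t)$ we conclude $\cB^\e(m^\e,w^\e) = \cB(m,w)$. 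Combined with bijectivity of $(m,w)\mapsto (m^\e,w^\e)$ between $\s{K}_1$ and $\s{K}_1^\e$, this yields the claim.

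Nothing here is deep; the whole argument is essentially a reparametrization of time, and the only care needed is bookkeeping of the Jacobian factor $1+\e\eta'$ through the Legendre-type transforms. The mildly subtle point is confirming that the homogeneity $(F^\e)^*(t,x,\cdot) = (1+\e\eta'(t))F^*(x,\cdot/(1+\e\eta'(t)))$ is consistent with the definition of $F^\e$ as $(1+\e\eta'(t))F(x,\cdot)$, which follows from the elementary identity $(\lambda G)^*(a) = \lambda G^*(a/\lambda)$ for $\lambda>0$; this is what guarantees that the $\cA^\e$/$\cB^\e$ pair is the correct dualization of the reparametrized problem.
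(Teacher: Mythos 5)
Your proof is correct and follows exactly the paper's approach: the paper's own (one-line) proof is precisely the observation that the change of variables $t\mapsto t+\e\eta(t)$ gives a bijection between $\s{K}$ and $\s{K}^\e$ with $\cA^\e(u^\e,\a^\e)=\cA(u,\a)$, and analogously for $\cB$. You have simply written out the bookkeeping (the pointwise constraint identity, the homogeneity of the Legendre transform, and the Jacobian cancellation) that the paper leaves implicit.
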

\begin{proof}
	After a change of variables $t \mapsto t + \e \eta(t)$, we observe that $(u,\a) \in \s{K}$ if and only if $(u^\e,\a^\e) \in \s{K}^\e$, and moreover $\s{A}^\e(u^\e,\a^\e) = \s{A}(u,\a)$ for all $(u,\a) \in \s{K}$.
	The first claim follows.
	The proof of the second claim is analogous.
\end{proof}

In the same spirit as Proposition \ref{prop:space-regularity}, we can formulate 
\begin{proposition}\label{prop:time-regularity}
Assume $(H1)$, $(H3)$, $(H4)$, relation \eqref{f strongly monotone} in $(H5)$, $(H6)$ and
\begin{equation}\label{hyp:H*}
|D_\zeta H^*(x,\zeta)\cdot\zeta|\leq C|\zeta|^{r'}+C, \ \ |D^2_{\zeta\zeta} H^*(x,\zeta)|\leq C|\zeta|^{r'-2}\ \ {\rm{a.e.}}\ \zeta\in\R^d, \tag{H\arabic{hyp}}
\end{equation} \vspace{-0.5cm}
\stepcounter{hyp}
\begin{equation}\label{hyp:j_2}
|D_\zeta j_2(\zeta)\cdot\zeta|^2\le C|\zeta|^{r'}+C, \ \ {\rm{a.e.}}\ \zeta\in\R^d. \tag{H\arabic{hyp}}
\end{equation}
\stepcounter{hyp}
Then, if $(u,m)$  is a weak solution of the planning problem \eqref{eq:planning_general}, we have
$${\color{red}|}m^{1/2}\partial_t(j_1(\nabla u)){\color{red}|}\in L^2_{\rm{loc}}((0,T);L^2(\T^d)), \hspace{0.4cm} \partial_t(m^{q/2})\in L^2_{\rm{loc}}((0,T);L^2(\T^d)).$$
Moreover, the norms of $\big|m^{1/2}\partial_t(j_1(\nabla u))\big|$ and $\partial_t(m^{q/2})$ in $L^2_{\rm{loc}}((0,T);L^2(\T^d))$ depend only on the data.
\end{proposition}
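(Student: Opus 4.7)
I would follow the strategy of Proposition \ref{prop:space-regularity} but use the localized time reparametrizations $u^\e, \alpha^\e, m^\e, w^\e$ defined above in place of the spatial shifts. The crucial advantage of these perturbations is that, since $\eta$ is compactly supported in $(0,T)$, they fix the values at $t=0$ and $t=T$; consequently, all boundary terms of the form $\int_{\T^d}(u^\e(T)-u(T))m_T\dd x$ and $\int_{\T^d}(u^\e(0)-u(0))m_0\dd x$ vanish identically. This is precisely why---in contrast to Proposition \ref{prop:space-regularity}---no regularity hypothesis is imposed on $m_0$ and $m_T$, and the resulting estimate is only local in time.

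The plan is as follows. Take the smooth approximating sequence $(u_n, \alpha_n)$ from Proposition \ref{prop:existence_solution_for_relaxed}. A direct chain rule computation shows that $(u_n^\e, \alpha_n^\e)$ is a pointwise subsolution of the time-perturbed HJ inequality $-\partial_t u_n^\e + H^\e(t,x,\nabla u_n^\e) \leq \alpha_n^\e$, while the reparametrization preserves both the continuity equation and the endpoint data for $(m^\e, w^\e)$. Mimicking the derivation of \eqref{eq:space-regularity1'} and \eqref{eq:space-regularity2'}, I would test $u_n^\e$ against the continuity equation for $(m, w)$ and, symmetrically, test $u_n$ against the continuity equation for $(m^\e, w^\e)$; a change of variables $t \mapsto t - \e\eta(t)/(1+\e\eta'(\cdot))$ in the latter converts it into a statement of the form ``$u_n^{-\e}$ against $(m,w)$''. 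Combining the two with the optimality identity \eqref{defcondsup-plan}, passing to the limit $n \to \infty$, and applying the coercivity assumption \eqref{eq:Hcoercivity} via Young's inequality then yields
$$\int_0^T\!\!\int_{\T^d} \Bigl( |j_1(\nabla u^\e) - j_2(-w/m)|^2 + |j_1(\nabla u^{-\e}) - j_2(-w/m)|^2 \Bigr) m\, \dd x\dd t \;\leq\; \mathcal{R}(\e),$$
where $\mathcal{R}(\e)$ gathers all the error terms generated by the perturbation.

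The main obstacle is to show $\mathcal{R}(\e) = O(\e^2)$. Because the factor $1+\e\eta'(t)$ enters linearly, the first-order contributions coming from $H^\e$, $(H^\e)^*$, and $\alpha_n^\e$ cancel by symmetry when the $+\e$ and $-\e$ inequalities are summed, which forces a Taylor expansion to second order. Hypotheses \eqref{hyp:H*} and \eqref{hyp:j_2} furnish precisely the bounds needed to control the second-order remainders from $(H^\e)^*$ and $j_2$ in terms of $\int m|w/m|^{r'}\dd x\dd t$, which is finite because $\mathcal B(m,w)<\infty$; analogous bounds on $H$ control the $H^\e$-terms by $\|\nabla u\|_{L^r}^r$. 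The coupling contribution $\int_0^T\!\!\int_{\T^d}(\alpha_n^\e + \alpha_n^{-\e} - 2\alpha_n)\,m\, \dd x\dd t$ produces, via the strong monotonicity \eqref{f strongly monotone}, a term $-c_f\int \min\{(m^\e)^{q-2}, m^{q-2}\}|m^\e - m|^2\,\dd x\dd t$ which I would transfer to the left-hand side, up to an additional $O(\e^2)$ remainder bounded by $\|m\|_{L^q}^q$.

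Finally, using the triangle inequality $|j_1(\nabla u^\e)-j_1(\nabla u^{-\e})|^2 \leq 2\bigl(|j_1(\nabla u^\e)-j_2(-w/m)|^2 + |j_1(\nabla u^{-\e})-j_2(-w/m)|^2\bigr)$, dividing by $\e^2$, and sending $\e \to 0$, the left-hand side converges to a quantity proportional to $\int_0^T\!\!\int_{\T^d}\eta^2(t)\bigl(|m^{1/2}\partial_t j_1(\nabla u)|^2 + |\partial_t m^{q/2}|^2\bigr)\dd x\dd t$, while the right-hand side remains bounded by a constant depending only on the data. Since $\eta\in C^\infty_c((0,T);[0,1])$ was arbitrary, this yields both desired local-in-time $L^2$ estimates.
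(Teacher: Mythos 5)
Your proposal follows essentially the same route as the paper's proof: localized time reparametrizations that fix the endpoint data, symmetric summation of the $+\e$ and $-\e$ inequalities so that the first-order contributions cancel, second-order Taylor control of the remainders via \eqref{hyp:H*} and \eqref{hyp:j_2} together with the finiteness of $\mathcal B(m,w)$, and the strong monotonicity \eqref{f strongly monotone} to produce the $\partial_t(m^{q/2})$ term after dividing by $\e^2$. The one imprecision is your claim that the $H^\e$-remainder $O(\e^2)\int_0^T\int_{\T^d} H(x,\nabla u_n^{-\e})\,m\,\dd x\dd t$ is controlled by $\|\nabla u\|_{L^r}^r$: since that integral is weighted by $m\in L^q$ rather than by Lebesgue measure, this bound does not suffice; instead one bounds it, as the paper does, by using $u_n^{-\e}$ as a test function in the continuity equation, which controls it through the uniform $L^\infty$ bounds on $u_n(0),u_n(T)$ and the uniform $L^{q'}$ bound on $(\alpha_n)_+$.
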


\begin{uncomment}
\begin{remark}\label{rem:no_time_diff}
We cannot conclude about the same results as in the previous proposition in the case of nonzero diffusion matrix $A$. It seems that there is a technical reason behind this, which cannot be handled by our approach. For more details, we refer to the terms $I_i^{\e,n}$ in the proof below, on which we cannot obtain suitable estimates.
\end{remark}
\end{uncomment}

\begin{proof}[Proof of Proposition \ref{prop:time-regularity}]

The proof closely follows the steps of Proposition \ref{prop:space-regularity}, but with translations in time rather than space.

{\it Step 0. Preparatory step.}
Take the sequence $(u_n,\alpha_n)_{n\ge 0}$ defined as in the proof of Proposition \ref{prop:space-regularity}.
Now use $u_n$ as test function for $\partial_t m^\e+\nabla\cdot w^\e=0$. In the same way, use $u_n^\e$ (defined as $u^\e$) as test function for $\partial_t m +\nabla\cdot w=0$.  
By the same derivation as for \eqref{eq:space-regularity1'} and \eqref{eq:space-regularity2'} one obtains
\begin{align}\label{eq:comb1}
&\int_{\T^d}[u_n(T)-u(T)]m_T-[u_n(0)-u(0)]m_0\dd x\\
\nonumber&\geq%I_1^{\e,n}+ 
\int_0^T\int_{\T^d}\left[H^\e(t,x,\nabla u_n^\e)+H^*(x,-w/m)+\nabla u_n^\e\cdot w/m + f(x,m)-\a_n^\e\right]m\dd x\dd t\\
\nonumber&=%I_1^{\e,n}+
\int_0^T\int_{\T^d}\left[H^\e(t,x,\nabla u_n^\e)+(H^\e)^*(t,x,-w/m)+\nabla u_n^\e\cdot w/m + f(x,m)-\a_n^\e\right]m\dd x\dd t\\
\nonumber&+\int_0^T\int_{\T^d}\left[H^*(x,-w/m)-(H^\e)^*(t,x,-w/m)\right]m\dd x\dd t
\end{align}
and
\begin{align}\label{eq:comb2}
&\int_{\T^d}[u_n(T)-u(T)]m_T-[u_n(0)-u(0)]m_0\dd x\\
\nonumber&\geq%I_2^{\e,n}+
\int_0^T\int_{\T^d}\left[H(x,\nabla u_n)+(H^\e)^*(t,x,-w^\e/m^\e)+\nabla u_n\cdot w^\e/m^\e + f^\e(t,x,m^\e)-\a_n\right]m^\e\dd x\dd t\\
\nonumber&=%I_2^{\e,n}+
\int_0^T\int_{\T^d}\left[H^{-\e}(s,x,\nabla u_n^{-\e})+H^*(x,-w/m)+\nabla u_n^{-\e}\cdot w/m\right]m\dd x\dd s\\
\nonumber&\quad +\int_0^T\int_{\T^d}O(\e^2)H(x,\nabla u_n^{-\e})m\dd x\dd s
 + \int_0^T\int_{\T^d}\left[f^\e(t,x,m^\e)-\a_n\right]m^\e\dd x\dd t\\
\nonumber&= %I_2^{\e,n}+
\int_0^T\int_{\T^d}\left[H^{-\e}(s,x,\nabla u_n^{-\e})+(H^{-\e})^*(s,x,-w/m)+\nabla u_n^{-\e}\cdot w/m\right]m\dd x\dd s
\\
\nonumber&\quad +\int_0^T\int_{\T^d}\left[H^*(x,-w/m)-(H^{-\e})^*(s,x,-w/m)\right]m\dd x\dd s+\int_0^T\int_{\T^d}O(\e^2)H(x,\nabla u_n^{-\e})m\dd x\dd s
\\
\nonumber&\quad + \int_0^T\int_{\T^d}\left[f^\e(t,x,m^\e)-\a_n\right]m^\e\dd x\dd t
\end{align}
where in the penultimate equation we used the change of variable $s=t+\e\eta(t)$ (which means in particular that $t=s-\e\eta(s)+O(\e^2)$ and $\frac{1}{1+\e\eta'(t)}=1-\e\eta'(s)+O(\e^2)$). By slight abuse of notation we denoted 
$$u_n^{-\e}(s,x):=u_n(s-\e\eta(s)+O(\e^2),x), 
$$
and we use the original notation for $H^{-\e}$ and $(H^{-\e})^*$.
Adding \eqref{eq:comb1} to \eqref{eq:comb2} we get
\begin{multline} \label{eq:comb3}
\int_0^T\int_{\T^d}\left[H^{-\e}(t,x,\nabla u_n^{-\e})+(H^{-\e})^*(t,x,-w/m)+\nabla u_n^{-\e}\cdot w/m\right]m\dd x\dd t
\\
+ \int_0^T\int_{\T^d}\left[H^\e(t,x,\nabla u_n^\e)+(H^\e)^*(t,x,-w/m)+\nabla u_n^\e\cdot w/m \right]m\dd x\dd t
\\
\leq 
\int_0^T\int_{\T^d}\left[\a_n^\e - f(x,m)\right]m\dd x\dd t
+ \int_0^T\int_{\T^d}\left[\a_n - f^\e(t,x,m^\e)\right]m^\e\dd x\dd t\\
+ 2\int_{\T^d}[u_n(T)-u(T)]m_T-[u_n(0)-u(0)]m_0\dd x
+ R_n(\e),
\end{multline}
where the remainder term satisfies
\begin{multline*}
R_n(\e) = \int_0^T\int_{\T^d}\left[(H^\e)^*(t,x,-w/m)+(H^{-\e})^*(t,x,-w/m)-2H^*(x,-w/m)\right]m\dd x\dd t
\\
+ O(\e^2)\int_0^T\int_{\T^d}H(x,\nabla u_n^{-\e})m\dd x\dd t.
\end{multline*}

{\it Step 1. Error term.}
Before letting $n \to \infty$ let us first show that $R_n(\e) = O(\e^2)$ (uniformly in $n$).
To that end we estimate the terms $H^* - (H^\e)^*$ and $H^* - (H^{-\e})^*$.
By a Taylor expansion, we have
\begin{align*}
(H^\e)^*(t,x,\zeta)&=(1+\e\eta'(t))H^*(x,\zeta/(1+\e\eta'(t)))=(1+\e\eta'(t))H^*(x,(1-\e\eta'(t)+O(\e^2))\zeta)\\
&=(1+\e\eta'(t))\left[H^*(x,\zeta)-\e\eta'(t)D_\zeta H^*(x,\zeta)\cdot\zeta+O(\e^2)D_\zeta H^*(x,\zeta)\cdot\zeta\right]\\
& \quad +(1+\e\eta'(t))\left[\e\eta'(t)+O(\e^2)\right]^2 \frac{1}{2}D^2_{\zeta\zeta}H^*(x,\zeta^*_\e)\zeta\cdot\zeta
\end{align*}
where $\zeta^*_\e$ is a point on the segment between $\zeta$ and $(1-\e\eta'(t)+O(\e^2))\zeta$. Let us notice that due to the growth condition \eqref{hyp:H*}
we have that 
$$|D^2_{\zeta\zeta} H^*(x,\zeta^*_\e)\zeta\cdot\zeta|\le C|\zeta^*_\e|^{r'-2}|\zeta|^2,$$
where, by the comparison $(1-C|\e|)|\zeta|\le|\zeta^*_\e|\le(1+C|\e|)|\zeta|$, the right-hand side is finite even when $\zeta_{\e}^* = 0$, and in particular
$$|D^2_{\zeta\zeta} H^*(x,\zeta^*_\e)\zeta\cdot\zeta|\leq C|\zeta|^{r'}$$ 
for small enough $\e$.
Therefore, by \eqref{hyp:H*} we have
\begin{multline} \label{eq:He*-H*}
(H^\e)^*(t,x,\zeta) - (1+\e\eta'(t))H^*(x,\zeta)+\e\eta'(t)D_\zeta H^*(x,\zeta)\cdot\zeta \\
= O(\e^2)\left[D_\zeta H^*(x,\zeta)\cdot\zeta
+  \frac{1}{2}D^2_{\zeta\zeta}H^*(x,\zeta^*_\e)\zeta\cdot\zeta\right]
= O(\e^2)(|\zeta|^{r'} + 1).
\end{multline}
Using a similar argument, we deduce
\begin{equation} \label{eq:H-e*-H*}
(H^{-\e})^*(t,x,\zeta) - (1-\e\eta'(t))H^*(x,\zeta)-\e\eta'(t)D_\zeta H^*(x,\zeta)\cdot\zeta 
= O(\e^2)(|\zeta|^{r'} + 1).
\end{equation}
Adding together \eqref{eq:He*-H*} and \eqref{eq:H-e*-H*}, setting $\zeta = -w/m$ and then integrating against $m$, we get
\begin{multline}
\int_0^T\int_{\T^d}\left[(H^\e)^*(t,x,-w/m)+(H^{-\e})^*(t,x,-w/m)-2H^*(x,-w/m)\right]m\dd x\dd t\\
=\int_0^T\int_{\T^d} O(\e^2)\left|\frac{w}{m}\right|^{r'}m\dd x\dd t 
=O(\e^2),
\end{multline}
where in the last equation we used the assumption \eqref{eq:hamiltonian_conjugate_bounds} and the fact that $\cB(m,w)$ is finite.

As for what remains of $R_n(\e)$, we use $u_n^{-\e}$ as a test function in $\partial_t m +\nabla\cdot w=0$; with the appropriate change of variable we get
\begin{multline}\label{error_term}
\int_0^T\int_{\T^d}H(x,\nabla u_n^{-\e})m\dd x\dd t
\\
\leq (1+O(\e))\left[\int_{\T^d}u_n(T) m_T-u_n(0)m_0\dd x + \int_0^T\int_{\T^d}(\alpha_n)_+(s-\eta(s)+O(\e^2),x)m(s,x)\dd x\dd s\right].
\end{multline}
Recall that from the definition of weak solution that $m \in L^q((0,T)\times\T^d)$, that $m_0, m_T \in L^1(\T^d)$ by hypothesis, while from the proof of Proposition \ref{prop:existence_solution_for_relaxed} we have that $(\alpha_n)_+$ is bounded in $L^{q'}$,  and $u_n$ can be taken such that $u_{n}(0)$ and $u_n(T)$ are bounded in  $L^{\infty}$. It follows that $\int_0^T\int_{\T^d}H(x,\nabla u_n^{-\e})m\dd x\dd s$ is bounded.
We can now rewrite \eqref{eq:comb3} as
\begin{multline} \label{eq:comb4}
\int_0^T\int_{\T^d}\left[H^{-\e}(t,x,\nabla u_n^{-\e})+(H^{-\e})^*(t,x,-w/m)+\nabla u_n^{-\e}\cdot w/m\right]m\dd x\dd t
\\
+ \int_0^T\int_{\T^d}\left[H^\e(t,x,\nabla u_n^\e)+(H^\e)^*(x,-w/m)+\nabla u_n^\e\cdot w/m \right]m\dd x\dd t
\\
\leq
\int_0^T\int_{\T^d}\left[\a_n^\e - f(x,m)\right]m\dd x\dd t
+ \int_0^T\int_{\T^d}\left[\a_n - f^\e(t,x,m^\e)\right]m^\e\dd x\dd t\\
+ 2\int_{\T^d}[u_n(T)-u(T)]m_T-[u_n(0)-u(0)]m_0\dd x
+ O(\e^2).
\end{multline}

{\it Step 2. Taking $n \to \infty$.}
We can now proceed exactly as in the proof of \cite[Proposition 4.3.]{GraMes} when taking limits as $n\to+\infty$ in \eqref{eq:comb4}.
First notice that we have the weak convergence (up to a subsequence) of $\nabla u_n^{-\e} \rightharpoonup \nabla u^{-\e}, \nabla u_n^{\e} \rightharpoonup \nabla u^{\e}$ in $L^r((0,T)\times\T^d)$.
Second recall that $\alpha_n$ converges weakly in $L^{q'}((0,T)\times\T^d)$ to $f$, and likewise $\a_n^\e$ converges weakly to $f^\e(\cdot,\cdot,m^\e)$.
Third recall that $u_n(T)$ converges to $u(T)$ in the weak-$\star$ topology $\sigma(L^\infty,L^1)$, and $u_n(0) \to u(0)$ uniformly (see the proof of Proposition \ref{prop:existence_solution_for_relaxed}).
Arguing as in \cite{GraMes}, we deduce that
\begin{multline} \label{eq:comb5}
\int_0^T\int_{\T^d}\left[H^{-\e}(t,x,\nabla u^{-\e})+(H^{-\e})^*(t,x,-w/m)+\nabla u^{-\e}\cdot w/m\right]m\dd x\dd t
\\
+ \int_0^T\int_{\T^d}\left[H^\e(t,x,\nabla u^\e)+(H^\e)^*(t,x,-w/m)+\nabla u^\e\cdot w/m \right]m\dd x\dd t
\\
\leq 
-\int_0^T\int_{\T^d}\left(f^\e(t,x,m^\e) - f(x,m)\right)(m^\e - m)\dd x\dd t
+ O(\e^2).
\end{multline}\\

{\it Step 3. Time regularity for $u$.} 

By the coercivity condition on $H$ and $H^*$ for any $\g>0$ we have that
$$\g H(x,\xi)+\g H^*(x,\zeta)-\g\xi\cdot\zeta\ge\g c_H |j_1(\xi)-j_2(\zeta)|^2,\ \ \forall x\in\T^d,\xi,\zeta\in\R^d.$$
In particular, setting $\tilde\zeta:=\g\zeta$, this implies
$$\g H(x,\xi)+\g H^*(x,\tilde\zeta/\g)-\xi\cdot\tilde\zeta\ge\g c_H|j_1(\xi)-j_2(\tilde\zeta/\g)|^2,\ \ \forall x\in\T^d,\xi,\tilde\zeta\in\R^d.$$
Therefore, we have
\begin{align}\label{ineq:first}
\int_0^T\int_{\T^d}c_H(1+\e\eta'(t))&\left|j_1(\nabla u^\e)-j_2\left(-\frac{w}{(1+\e\eta'(t))m}\right)\right|^2m\dd x\dd t\\
\nonumber&\le\int_0^T\int_{\T^d}\left[H^\e(t,x,\nabla u^\e)+(H^\e)^*(t,x,-w/m)+\nabla u^\e\cdot w/m \right]m\dd x\dd t
\end{align}
and similarly
\begin{align}\label{ineq:second}
\int_0^T\int_{\T^d}c_H(1-\e\eta'(t))&\left|j_1(\nabla u^{-\e})-j_2\left(-\frac{w}{(1-\e\eta'(t))m}\right)\right|^2m\dd x\dd t\\
\nonumber&\le\int_0^T\int_{\T^d}\left[H^{-\e}(t,x,\nabla u^{-\e})+(H^{-\e})^*(t,x,-w/m)+\nabla u^\e\cdot w/m \right]m\dd x\dd t.
\end{align}
By the triangle inequality,
\begin{align}\label{ineq:chain}
&\int_0^T\int_{\T^d}\frac{c_H}{3}\min\{1+\e\eta'(t);1-\e\eta'(t)\}\left|j_1(\nabla u^\e)-j_1(\nabla u^{-\e})\right|^2m\dd x\dd t\\
\nonumber&\le(1+\e)c_H \int_0^T\int_{\T^d}\left|j_1(\nabla u^\e)-j_2\left(-\frac{w}{(1+\e\eta'(t))m}\right)\right|^2m\dd x\dd t\\
\nonumber&+(1+\e)c_H \int_0^T\int_{\T^d}\left|j_2\left(-\frac{w}{(1+\e\eta'(t))m}\right)-j_2\left(-\frac{w}{(1-\e\eta'(t))m}\right)\right|^2m\dd x\dd t\\
\nonumber&+ (1+\e)c_H \int_0^T\int_{\T^d}\left|j_1(\nabla u^{-\e})-j_2\left(-\frac{w}{(1-\e\eta'(t))m}\right)\right|^2m\dd x\dd t,
\end{align}
where it remains to estimate the second term on the right-hand side.
For this we note that
\begin{align*}
&\left|j_2(\zeta/(1+\e\eta'(t)))-j_2(\zeta/(1-\e\eta'(t)))\right|^2=\left|j_2(\zeta(1-\e\eta'(t)+O(\e^2)))-j_2(\zeta(1+\e\eta'(t)+O(\e^2)))\right|^2\\
&=|D_\zeta j_2(\zeta(1-\e\eta'(t)+O(\e^2)))\cdot\zeta\eta'(t)|^2 \e^2\le C |\zeta|^{r'}\e^2,
\end{align*}
where the last constant depends only on $\eta'(t)$ and the constant in the hypothesis \eqref{hyp:j_2}. Setting $\zeta:=-w/m$ in the previous inequality, we find that the second term on the right-hand side of \eqref{ineq:chain} is $O(\e^2)$ since $\ds\int_0^T\int_{\T^d}\left|w/m\right|^{r'}m\dd x\dd t$ is finite.
Equation \eqref{eq:comb5} now becomes
\begin{multline} \label{eq:comb6}
\frac{c_H}{6}\int_0^T\int_{\T^d}\left|j_1(\nabla u^\e)-j_1(\nabla u^{-\e})\right|^2m\dd x\dd t
\\
\leq 
-\int_0^T\int_{\T^d}\left(f^\e(t,x,m^\e) - f(x,m)\right)(m^\e - m)\dd x\dd t
+ O(\e^2)
\end{multline}
for $\e$ small enough. \\

{\it Step 4. Time regularity for $m$.} 

We have
\begin{align*}
&-\int_0^T\int_{\T^d}\left(f^\e(t,x,m^\e)-f(x,m)\right)(m^\e-m)\dd x\dd t \\
&= - \iint_{\{m^\e \leq m\}} (f^\e(t,x,m^\e) - f(x,m^\e))(m^{\e} - m) \dd x \dd t 
- \iint_{\{m^\e \leq m\}} (f(x,m^\e) - f(x,m))(m^{\e} - m) \dd x \dd t\\
&- \iint_{\{m < m^\e\}} (f^\e(t,x,m^\e) - f^\e(t,x,m))(m^{\e} - m) \dd x \dd t
- \iint_{\{m < m^\e\}} (f^\e(t,x,m) - f(x,m))(m^{\e} - m) \dd x \dd t \\
&\leq C\int_0^T \int_{\bb{T}^d} |\e|\min\{(m^\e)^{q-1},m^{q-1}\}|m^{\e} - m| \dd x \dd t 
- c_0\int_0^T \int_{\bb{T}^d} \min\{(m^\e)^{q-2},m^{q-2}\}|m^\e - m|^2 \dd x \dd t \\
&\leq C|\e|^2 \int_0^T \int_{\bb{T}^d} \min\{m^\e,m\}^{q} \dd x \dd t 
- \frac{c_0}{2}\int_0^T \int_{\bb{T}^d} \min\{(m^\e)^{q-2},m^{q-2}\}|m^\e - m|^2 \dd x \dd t,
\end{align*}
where, we used Young's inequality in the last inequality, and the expression $\min\{(m^\e)^{q-2},m^{q-2}\}|m^\e - m|^2$ is  treated as zero whenever $m^\e = m$ (even in the case $q < 2$).
	Since
	$$
	\int_0^T \int_{\bb{T}^d} \min\{m^\e,m\}^{q} \dd x \dd t \leq \int_0^T \int_{\bb{T}^d} m^q \dd x \dd t  \leq C,
$$
Equation \eqref{eq:comb6} now becomes
$$
\frac{c_H}{6}\int_0^T\int_{\T^d}\left|j_1(\nabla u^\e)-j_1(\nabla u^{-\e})\right|^2m\dd x\dd t
+ \frac{c_0}{2}\int_0^T \int_{\bb{T}^d} \min\{(m^\e)^{q-2},m^{q-2}\}|m^\e - m|^2 \dd x \dd t = O(\e^2).
$$
Dividing the previous identity by $\e^2$ and letting $\e \to 0$, we conclude that $m^{1/2}\partial_tj_1(\nabla u), \ \partial_t(m^{q/2})\in L^2_{{\rm{loc}}}((0,T);L^2(\T^d))$, with norms estimated by a constant depending only on the data.
The proof is complete.
\end{proof}

\section{Open questions and further directions} \label{sec:open questions}

$\bullet$ An interesting direction of study would be the relaxation of the joint assumption on $q$ and $r$ (the growth exponent of $F$ and $H$, respectively) in the case of the planning problem. This should somehow imply also a more precise link between our work and the results of Orrieri-Porretta-Savar\'e in \cite{OrrPorSav}. This direction would be strongly related to the search for higher order summability estimates on the $m$ variable, also in the spirit of \cite{LavSan}. 

\vspace{10pt}

$\bullet$ The well-posedness of the second order (degenerate) planning problem is largely open (except the non-degenerate case with essentially quadratic Hamiltonians in \cite{Lions_course_planning, Por13,Por14}). Our hope is that the variational approach developed in the present paper and exploited also in \cite{OrrPorSav} will provide hints to attack the general second order problem.

 In this direction the exact controllability problem of the Fokker-Planck equation with general initial and final conditions (pointed out also by Lions in his lectures) seems to be an interesting open question, by its own. 

\vspace{10pt}

$\bullet$ We are aiming to pursue the global in time a priori Sobolev estimates in the time variable for the solutions  of  both mean field games and the planning problem, which so far seem to be inaccessible by relying only on our current techniques.

\vspace{1cm}

\bibliographystyle{alpha}
\bibliography{jameson_alpar}{}

\end{document}